\declaretheorem[numberwithin=section]{theorem} 
\declaretheorem[sibling=theorem]{proposition} 
\declaretheorem[sibling=theorem]{corollary} 
\declaretheorem[sibling=theorem]{lemma}
\declaretheorem[sibling=theorem]{conjecture}
\declaretheorem[sibling=theorem, style=remark]{claim}
\declaretheorem[sibling=theorem, style=definition]{definition}
\declaretheorem[sibling=theorem, style=remark]{remark}
\numberwithin{equation}{section}     
\setlist[enumerate,1]{label={\upshape \arabic*.},ref=\alph*}
\newcommand{\N}{\mathbb{N}} \newcommand{\Z}{\mathbb{Z}} \newcommand{\Q}{\mathbb{Q}} \newcommand{\R}{\mathbb{R}}
\newcommand{\C}{\mathbb{C}}
\newcommand{\T}{\mathbb{T}}
\renewcommand{\H}{\mathbb{H}}
\newcommand{\V}{\mathcal{V}}
\newcommand{\cB}{\mathcal{B}}\newcommand{\cC}{\mathcal{C}}
\newcommand{\cD}{\mathcal{D}}\newcommand{\cE}{\mathcal{E}}\newcommand{\cF}{\mathcal{F}}
\newcommand{\cP}{\mathcal{P}}
\newcommand{\cV}{\mathcal{V}}
\newcommand{\Haar}{\text{Haar}}
\newcommand{\D}{\mathbb{D}}
\renewcommand{\P}{\mathbb{P}}
\newcommand{\CP}{\mathbb{CP}}
\newcommand{\Sym}{\mathrm{Sym}}
\newcommand{\F}{\mathcal{F}}
\newcommand{\FF}{\mathbb{F}}
\newcommand{\RP}{\mathbb{RP}}
\newcommand{\SL}{\mathrm{SL}}
\newcommand{\SO}{\mathrm{SO}}
\newcommand{\GL}{\mathrm{GL}}
\newcommand{\SU}{\mathrm{SU}}
\newcommand{\U}{\mathrm{U}}
\newcommand{\Grass}{\mathrm{Grass}}
\newcommand{\Diff}{\mathrm{Diff}}
\newcommand{\Prob}{\mathrm{Prob}}
\DeclareMathOperator{\trace}{tr}
\newcommand{\tribar}[1]{\mathopen{| {\kern -1.5pt} | {\kern -1.5pt} |} {#1} \mathclose{| {\kern -1.5pt} | {\kern -1.5pt} |}}
\renewcommand{\setminus}{\smallsetminus}
\renewcommand{\epsilon}{\varepsilon}
\title{Dedieu--Shub Measures}
\author{joshua Paik}
 \email{jdp5887@psu.edu}
 \address{The Pennsylvania State University}
\date{\today}
\begin{document}

\begin{abstract}
This paper introduces Dedieu-Shub measures and surveys their appearance in the literature. 
\end{abstract}

\maketitle


\section{Introduction}

\subsection{Setting and Results}

 Let $G$ be a locally compact second countable topological group\footnote{more generally, we can consider semigroups}. Let $K$ be a compact subgroup of $G$. Suppose both $G$ and $K$ act transitively on a compact metric space $X$ equipped with a Borel sigma algebra. Then $X$ is a homogeneous space and $X = G/G' = K/K'$. In this paper, we consider specific cases when $G = \GL(d,\C), \ \GL(2,\R)$ and $\Diff^\infty(\R/\Z)$, $K = \SU(d), \ \SO(2),$ and $\R/\Z$ and $X = \FF_d, \ \RP^1,$ and $\R/\Z$ respectively. We denote normalized Haar measure on $K$ by $\nu$ and define $\mu$ to be the unique normalized $K$--stationary measure on $X$ -- meaning $\mu$ satisfies $\mu(B) = \int \mu(kB) \ d\nu(k)$ for all $B \in \cB(X)$ and $\mu(X) = 1$. Let $\Prob(X)$ denote the space of all Borel probability measures on $X$ endowed with the weak-$*$ topology. The purpose of this paper is to explore the following definition.

\begin{definition} \label{definition}
Let $G$ act on $X$ and suppose $K \subset G$ is a compact subgroup equipped with Haar measure $\nu$. We call a Borel measurable function $m: G \to \Prob(X)$ a \textit{Dedieu--Shub measure} for $(K,G,X)$ if 
\begin{enumerate}
\item For every $g \in G$, we have that $m_g$ is a $g$--invariant probability measure on $X$ and
\item \textit{(Dedieu--Shub property)}for every $g \in G$ and for every $B \in \cB(X)$, we have $$\int_K m_{kg}(B) \ d\nu(k) = \mu(B).$$
\end{enumerate}
\end{definition}

This definition is due to Jairo Bochi. Naming such a function after Dedieu and Shub follows from a key technical result in \cite{dedieu2003mike}.\footnote{We remark that the definition did not appear in the work of Shub and his collaborators. } As with all definitions, one is concerned that it is well defined, that there are non trivial examples of things satisfying the definition, and applications. This is what this paper is concerned with. 

Our first example of a Dedieu--Shub measure is over $\GL(d,\C)$ and is due to Dedieu and Shub. Let $\FF_d$ denote the complex flag variety isomorphic to $\GL(d,\C)/P = \SU(d)/\T^d$ where $P$ is the subgroup of upper triangular matrices and $\T^d$ 
is the maximal torus of $\SU(d)$. A more explicit description of the complex flag variety is  
\begin{align*}
    \FF_d &= \{ \ [\langle w_1\rangle \subset \langle w_1,w_2\rangle  \subset ... \subset \langle w_1,...,w_d\rangle ] : \\ & \ \ \ \ \ \ \ \ \ \ \ \ \ \ \ \ [w_1,...,w_d] \text{ is a linearly independent set } \}\\
    &\subseteq \text{Grass}(1,d) \times \text{Grass}(2,d) \times ... \times \text{Grass}(d,d)
\end{align*}
where $\langle \cdot \rangle$ 
denotes the span of $\cdot.$

 It is useful notationally to define a function $\phi: \{ \text{linearly independent set}\} \to \FF_d$ where $$\phi(w_1,...,w_d) = [\langle w_1 \rangle \subset \langle w_1, w_2 \rangle  \subset ... \subset \langle w_1, ..., w_d \rangle ].$$

\begin{theorem}[\cite{dedieu2003mike}]
\label{DS measures}
Let $G = GL(d,\C)$, $K = \SU(d)$, and $X = \FF_d$.  Let $\Sym(d)$ be the symmetric group on $\{1,...,d\}$. For a given $A \in \GL(d,\C)$, let 
\begin{align*}
\lambda_i(A) &= \text{ the ith largest in modulus eigenvalue of } A\\
v_i(A) &= \text{the eigenvector corresponding to } \lambda_i(A).
\end{align*}
Let $p:\Sym(d) \times GL(d,\C) \to [0,1]$ be defined as $$p_\sigma(A) = \frac{\prod \limits_{j = 1}^d |\lambda_j|^{2(d - \sigma(j))}}{\sum \limits_{\pi \in \Sym(d)} \prod \limits_{j=1}^d |\lambda_j|^{2(d - \pi(j))}}.$$
Define $m: \GL(d,\C) \to \Prob(\FF_d)$ as $$m_A = \sum \limits_{\sigma \in \Sym(d)} p_\sigma(A) \delta_{\phi(v_{\sigma(1)}(A),...,v_{\sigma(d)}(A))}.$$
Then $m_A$ is a Dedieu--Shub measure. 
\end{theorem}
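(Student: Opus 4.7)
The first condition of Definition~\ref{definition} is essentially immediate: each atom of $m_A$ is a point mass at a flag $\phi(v_{\sigma(1)}(A),\dots,v_{\sigma(d)}(A))$ whose constituent subspaces are spans of eigenvectors of $A$ and hence $A$-invariant, so $A$ fixes every such flag in $\FF_d$ pointwise and $A_* m_A = m_A$. The work all lies in the Dedieu--Shub identity, which I would attack in three steps.

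Write $\bar m_A := \int_K m_{kA}\,d\nu(k)$; the goal is $\bar m_A = \mu$. As a first step, I would reduce to upper-triangular matrices. The $QR$ decomposition writes $A = UT$ with $U \in \SU(d)$ (absorbing the determinantal phase of the unitary factor into $T$) and $T$ in the upper-triangular Borel subgroup $B$; the substitution $k \mapsto kU^{-1}$ combined with left-invariance of $\nu$ then gives $\bar m_A = \bar m_T$, so it suffices to treat $T \in B$ with distinct eigenvalue moduli (the non-generic set has $\nu$-measure zero after multiplying by Haar).

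As a second step, I would realize $\bar m_T$ as a pushforward. Define the eigenflag map
\[
\Phi : \SU(d) \times \Sym(d) \longrightarrow \FF_d, \qquad \Phi(k,\sigma) = \phi\bigl(v_{\sigma(1)}(kT),\dots,v_{\sigma(d)}(kT)\bigr),
\]
and equip the source with the weighted measure $\sum_{\sigma \in \Sym(d)} p_\sigma(kT)\,d\nu(k)\otimes\delta_\sigma$. Unpacking the definition of $m_{kT}$ gives $\bar m_T = \Phi_*$ of this weighted measure.

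The third and crucial step is to identify this pushforward with $\mu$. My plan is to stratify $\SU(d)$ according to $\sigma$, parametrize $k$ in local coordinates adapted to the eigenflag of $kT$, and compute the Jacobian of $\Phi(\cdot,\sigma)$ relative to $\mu$. The weights $p_\sigma(kT)$, which are built precisely out of the squared moduli of the eigenvalues, should turn out to be exactly the Radon--Nikodym factors that cancel this Jacobian, so that summing over $\sigma$ telescopes to the invariant volume of $\FF_d$ and yields $\bar m_T = \mu$. This Jacobian calculation is the heart of \cite{dedieu2003mike} and is the step I expect to be the main obstacle: it requires an explicit description of how the eigenvectors $v_i(kT)$ depend on $k$ near a generic point, delicate coordinate computations on $\SU(d)$ and $\FF_d$, and careful bookkeeping of the eigenvalue moduli that appear in $p_\sigma(kT)$.
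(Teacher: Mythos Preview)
Your outline is correct and matches the paper's strategy: reduce to a normal form (the paper uses SVD to get a diagonal $A$ rather than QR to get upper-triangular, but either works by bi-invariance of Haar), then show that the weighted pushforward of Haar under the eigenflag map equals $\mu$. For your step~3 the paper introduces the incidence variety $\mathcal{V}_A=\{(U,f):UAf=f\}\subset\U(d)\times\FF_d$ and applies the coarea formula to both projections $\Pi_1,\Pi_2$ (the ``double fibration trick''); the fibers of $\Pi_2$ over $\FF_d$ are copies of the diagonal torus $\T^d$, the normal Jacobian of $\Pi_1$ is $\prod_{j<i}\bigl|1-\lambda_{\sigma(i)}/\lambda_{\sigma(j)}\bigr|^2$, and integrating this over the torus fiber is a Vandermonde computation that produces exactly the denominator of $p_\sigma$---so your expectation that the eigenvalue moduli in $p_\sigma$ are precisely the Radon--Nikodym factors is confirmed.
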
 

For example, the Dedieu--Shub measure for $\GL(2,\C)$ is \begin{equation}
    m_A = \frac{|\lambda_1|^2}{|\lambda_1|^2 + |\lambda_2|^2} \delta_{v_1} + \frac{|\lambda_2|^2}{|\lambda_1|^2 + |\lambda_2|^2} \delta_{v_2}. \label{dim2weights}
\end{equation}  

This gives the following result of Dedieu--Shub. 

\begin{theorem}[Dedieu--Shub \cite{dedieu2003mike}] \label{originalineq}   
Let $A \in GL(d,\C)$. Let $f:\R \to \R$ be monotone increasing. Then 
 $$\int_{U(d)} f \left(\prod \limits_{i=1}^k |\lambda_i (UA )| \right) \ d\nu(U) \geq \int_{\text{Grass}(d,k)} f \left( \det A|g_{d,k} \right) \ d\mu(g_{d,k})$$
\end{theorem}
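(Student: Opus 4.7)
The plan is to apply the Dedieu--Shub property from Theorem~\ref{DS measures} to a function on $\FF_d$ obtained by pulling back a function of $V \in \Grass(k,d)$. The crucial preliminary is a \emph{unitary invariance}: for any $V \in \Grass(k,d)$ and $U \in \U(d)$, the map $(UA)|_V \colon V \to UAV$ has the same $k$-volume expansion factor as $A|_V$, because $U$ acts as an isometry on $AV$. Writing $\psi_A(V)$ for this intrinsic $k$-volume expansion factor -- the quantity denoted $\det A|g_{d,k}$ on the right-hand side of the theorem -- we therefore have $\psi_{UA}(V) = \psi_A(V)$ for every $U \in \U(d)$. I would also note that the projection $\pi \colon \FF_d \to \Grass(k,d)$ sending a flag to its $k$-dimensional component intertwines the $\SU(d)$-actions, so it pushes the $\SU(d)$-stationary measure $\mu$ on $\FF_d$ to the unique $\SU(d)$-invariant probability measure on $\Grass(k,d)$, which is the measure denoted $\mu$ on the right-hand side.

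Next, for each fixed $U \in \U(d)$, I would use Theorem~\ref{DS measures} to expand
\[ \int_{\FF_d}(f \circ \psi_A \circ \pi)\, dm_{UA} \;=\; \sum_{\sigma \in \Sym(d)} p_\sigma(UA)\, f\!\left(\prod_{i=1}^k |\lambda_{\sigma(i)}(UA)|\right), \]
since the flag $\phi(v_{\sigma(1)}(UA), \dots, v_{\sigma(d)}(UA))$ projects under $\pi$ to the $UA$-invariant subspace $V_\sigma := \langle v_{\sigma(1)}(UA), \dots, v_{\sigma(k)}(UA)\rangle$, and on this invariant subspace one has $\psi_A(V_\sigma) = \psi_{UA}(V_\sigma) = \prod_{i=1}^k |\lambda_{\sigma(i)}(UA)|$ by the preliminary. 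Since the eigenvalues $\lambda_i(UA)$ are ordered by decreasing modulus, $\prod_{i=1}^k|\lambda_i(UA)| \ge \prod_{i=1}^k|\lambda_{\sigma(i)}(UA)|$ for every $\sigma$; monotonicity of $f$ together with $\sum_\sigma p_\sigma(UA) = 1$ then yields the pointwise bound
\[ f\!\left(\prod_{i=1}^k |\lambda_i(UA)|\right) \;\ge\; \int_{\FF_d}(f \circ \psi_A \circ \pi)\, dm_{UA}. \]

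Finally, I would integrate this bound over $U$. Because both sides depend on $U$ only through $UA$ up to multiplication by a scalar in the center of $\U(d)$ (which affects neither the eigenvalue moduli of $UA$ nor the Dedieu--Shub measure $m_{UA}$), the integral over $\U(d)$ collapses to the integral over $\SU(d)$ against $\nu$. Applying Fubini and the Dedieu--Shub property with $g = A$ gives
\[ \int_{\SU(d)}\!\!\int_{\FF_d}(f \circ \psi_A \circ \pi)\, dm_{UA}\, d\nu(U) \;=\; \int_{\FF_d} f \circ \psi_A \circ \pi \, d\mu \;=\; \int_{\Grass(k,d)} f(\psi_A(V))\, d\pi_*\mu(V), \]
and by the preliminary $\pi_*\mu$ is the unitarily invariant measure on $\Grass(k,d)$, so this is exactly the right-hand side of the theorem. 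The main obstacle I anticipate is the bookkeeping in the middle step: correctly identifying $\pi_* m_{UA}$ from the explicit formula of Theorem~\ref{DS measures} and confirming that $\psi_A$ restricted to each invariant subspace in its support equals the asserted product of eigenvalue moduli. Once those identifications are pinned down, the unitary invariance trick and the Dedieu--Shub property do all the real work, and any nongenericity in the spectrum of $A$ (coincident eigenvalues, nondiagonalizability) can be handled by continuity and approximation by diagonalizable matrices.
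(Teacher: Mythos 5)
Your proposal is correct and takes essentially the same approach as the paper: identify $\prod_{i\le k}|\lambda_i(UA)|$ with the volume expansion of $A$ on the top $k$-dimensional $UA$-invariant subspace (unitary invariance), bound this Dirac mass term against the full Dedieu--Shub measure using monotonicity of $f$ and the eigenvalue ordering, and close with the Fubini-type property. The only cosmetic difference is that the paper invokes Lemma~\ref{projection} to push $m$ forward to $\Grass(d,k)$ and works there, whereas you keep $m$ on $\FF_d$ and pull the test function back along $\pi$ --- the two bookkeeping choices yield the same computation.
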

In particular, the theorem is true when $f$ is $\log$, which is interesting dynamically. 

One can naturally ask whether similar results hold over $GL(d,\R)$. However, constructing Dedieu--Shub measures over $\GL(d,\R)$ is more difficult than over $\GL(d,\C)$. One reason is that the invariant measures for two matrices in $SO(d)A$, where $A \in \GL(d,\R)$, do not necessarily have disjoint supports. Even in dimension 2, a family $SO(2)A$ acting on $\RP^1$ has both \textit{elliptic matrices} whose invariant measures are (typically) fully supported and \textit{hyperbolic matrices} whose invariant measures are supported on points. In contrast, the invariant measures of two different elements from $U(d)A$ acting on $\FF_d$ are typically Dirac masses with disjoint supports.

Our second example, is the case of the linear projective action on $\RP^1$. 
    
\begin{theorem}
\label{GL2R}
    Let $G = \text{GL}(2,\R)$, $K = SO(2), X = \RP^1$. Define a function $m:G \to Prob(X)$ as follows:
    \begin{enumerate}
        \item[\textit{(1)}] (hyperbolic case) if $\det A > 0$ and the eigenvalues of $A$ satisfy $|\lambda_1| > |\lambda_2|$, then $$m_A := \delta_{v_1}.$$
        \item[\textit{(2)}] (elliptic case) if $\det A > 0$ and the eigenvalues are \textit{not} real, then $$m_A := \text{the unique acip measure conjugate to an irrational rotation}.$$
        \item[\textit{(3)}] if $\det A <0$, then necessarily the eigenvalues of $A$ are real and $$m_A = \frac{|\lambda_1|}{|\lambda_1| + |\lambda_2|} \delta_{v_1} + \frac{|\lambda_2|}{|\lambda_1| + |\lambda_2|} \delta_{v_2}.$$
    \end{enumerate}

    Then $m_A$ is a Dedieu--Shub measure. 
\end{theorem}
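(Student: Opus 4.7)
The plan is to verify the two defining properties of a Dedieu--Shub measure. The $A$-invariance of $m_A$ is routine and handled case by case: in the hyperbolic and $\det A < 0$ cases the atoms are supported on projective fixed points of $A$; in the elliptic case the action of $A$ on $\RP^1$ is smoothly conjugate to a rotation (since the Möbius extension of $A/\sqrt{\det A} \in \mathrm{SL}(2,\R)$ fixes a unique $z(A) \in \mathbb{H}$), and $m_A$ may be identified with the harmonic measure $\omega_{z(A)} = \frac{\mathrm{Im}\, z(A)}{\pi|x-z(A)|^2}\,dx$ on $\partial\mathbb{H} = \RP^1$. This last description will be the useful one for the averaging identity.

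For the identity $I_A(B) := \int_{SO(2)} m_{kA}(B)\,d\nu(k) = \mu(B)$, I would first reduce to canonical form. Translation invariance of Haar gives $I_{k'A} = I_A$ for $k' \in SO(2)$. For right-translation, the identity $v_i(Mk') = (k')^{-1} v_i(k'M)$ (together with the conjugacy $Mk' = (k')^{-1}(k'M)k'$, which transports the ACIP in the elliptic regime) yields $m_{Mk'} = (k')^{-1}_* m_{k'M}$ in all three regimes. This gives $I_{Ak'}(B) = I_A(k'B)$, and since $\mu$ is $SO(2)$-invariant, the desired identity is preserved under $A \mapsto k_1 A k_2$ with $k_1, k_2 \in SO(2)$. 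Applying the singular value decomposition then reduces the problem to the canonical cases $A = \mathrm{diag}(\sigma_1, \sigma_2)$ when $\det A > 0$ and $A = \mathrm{diag}(\sigma_1, -\sigma_2)$ when $\det A < 0$, with $\sigma_1 \geq \sigma_2 > 0$.

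The negative-determinant canonical case is a direct computation: every $R_\theta A$ is hyperbolic with opposite-signed eigenvalues $\lambda_\pm(\theta) = \tfrac{1}{2}\bigl((\sigma_1-\sigma_2)\cos\theta \pm \sqrt{(\sigma_1-\sigma_2)^2\cos^2\theta + 4\sigma_1\sigma_2}\bigr)$ and explicit eigenvector directions $z_\pm(\theta) \in \RP^1$. A change of variables $\theta \leftrightarrow z_\pm(\theta)$ pushes the two-atom integrand forward to $\mu$, with the weights $|\lambda_\pm|/(|\lambda_+|+|\lambda_-|)$ appearing as the Jacobian factor; the dependence on $|\lambda|$ rather than $|\lambda|^2$ as in the complex case reflects the $1/(cz+d)^2$ derivative of the Möbius action on $\RP^1$ versus the $1/|cz+d|^2$ density on $\CP^1$.

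The positive-determinant case is the main obstacle. Here $SO(2)$ decomposes into two hyperbolic arcs (where $(\sigma_1+\sigma_2)^2\cos^2\theta > 4\sigma_1\sigma_2$) and two elliptic arcs. A direct calculation shows that for $\theta$ in the elliptic range, the fixed point $z(R_\theta \Sigma)$ satisfies $|z|^2 = \sigma_2/\sigma_1$, so it traverses the hyperbolic geodesic $|z| = r$ in $\mathbb{H}$ where $r := \sqrt{\sigma_2/\sigma_1}$, limiting to $\pm r \in \RP^1$ at the parabolic boundaries. The elliptic contribution $\int \omega_{z(\theta)}\,d\theta/(2\pi)$ is then an integral of harmonic measures along a geodesic, which I would evaluate in closed form via the Poisson kernel; the hyperbolic contribution $\int \delta_{v_1(\theta)}\,d\theta/(2\pi)$ is a pushforward supported on $\RP^1 \setminus [-r, r]$, computable by Jacobian. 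Verifying that these two pieces sum to $\mu$ is the heart of the proof. If the direct Poisson calculation proves unwieldy, a fallback is to test against Fourier modes on $\RP^1 \cong \R/\pi\Z$ and use the $\theta \leftrightarrow -\theta$ symmetry to kill all nonzero modes.
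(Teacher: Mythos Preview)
Your reduction via SVD and the negative-determinant computation match the paper's argument essentially line for line: the paper also pushes forward along $\theta \mapsto \varphi$ (the eigendirection angle) and reads off the weight $|\lambda_i|/(|\lambda_1|+|\lambda_2|)$ from the Jacobian.

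For $\det A > 0$ your route diverges from the paper's. The paper, following Pujals--Robert--Shub, conjugates by the Cayley transform $C:\overline{\H}\to\overline{\D}$, so that $R_\theta A$ acts on $\T$ as a Blaschke factor $B_\theta(z)=e^{i\theta}b(z)$. The point is that this erases your elliptic/hyperbolic dichotomy: in \emph{both} regimes $m_{R_\theta A}$ is (the pullback of) the harmonic measure $\cP(\alpha(\theta),\cdot)\,d\mu$ at the unique fixed point $\alpha(\theta)\in\overline{\D}$, with $\alpha(\theta)\in\D$ when elliptic and $\alpha(\theta)\in\T$ (so $\cP$ degenerates to a Dirac) when hyperbolic. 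The averaging identity is then obtained without ever computing a density: one checks $\int_0^{2\pi}(B_\theta^n)_*\mu\,d\theta=\mu$ for each finite $n$ by observing that $w\mapsto B_\theta^n(0)$ with $w=e^{i\theta}$ extends holomorphically to $\D$ and vanishes at $0$, so the mean-value property gives $\int_0^{2\pi}\tilde h(B_\theta^n(0))\,d\theta/2\pi=\tilde h(0)=\int h\,d\mu$; then let $n\to\infty$. Your plan---integrating the Poisson kernel along the geodesic $|z|=\sqrt{\sigma_2/\sigma_1}$ and separately pushing forward the hyperbolic Diracs---would arrive at the same identity, and your geodesic observation is a correct and pleasant geometric fact, but you would be re-deriving by hand what the Blaschke packaging gets for free from harmonicity. (Your Fourier fallback, incidentally, needs more than the $\theta\leftrightarrow-\theta$ symmetry, which only kills odd modes.)
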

The case of positive determinant is implicit in \cite{pujals2006expanding} and we present the argument here to be self contained. The case of negative determinant is potentially new. Producing Dedieu--Shub measures over $\GL(d,\R)$ for $d\geq 3$ seems like a challenging problem.

One reason Dedieu--Shub measures are interesting, is they are a tool in proving inequalities relating \textit{random} and \textit{deterministic} exponents. See section \ref{definititiondef} for more. Over $GL(d,\R)$, this is a question that apperars (as a comment) in Dedieu--Shub \cite{dedieu2003mike} and a survey article (as a question) of Burns, Pugh, Shub, Wilkinson \cite{burns2001recent}. Recall that a uniform random $k$--dimensional Grassmanian in $\C^d$, denoted $g_{k}$, can be represented as the span of the first $k$--columns of a Haar random $U \in U(d)$ or $O(d)$, and we denote this random $d \times k$ rectangular matrix as $U_k$. Then for a square matrix $A$, define $\det A|g_k := \det (AU_k)(AU_k)^*$. 

\begin{conjecture}[\cite{dedieu2003mike},\cite{burns2001recent}] \label{conjecture}
     Let $A \in GL(d,\R) \setminus \R I$. Then
     \begin{enumerate}
         \item $$\int_{\SO(d)} \prod \limits_{i=1}^k |\lambda_i (UA )| \ d\nu(U) \geq c_{d,k} \int_{\text{Grass}(d,k)} \det A|g_{k} \ d\mu(g_{k}),$$
         \item $$\int_{\SO(d)} \sum \limits_{i=1}^k \log |\lambda_i (UA )| \ d\nu(U) \geq c_{d,k} \int_{\text{Grass}(d,k)} \log \det A|g_{k} \ d\mu(g_{k})\text{ and,}$$
         \item One can choose $c_{d,k} = 1$ for all $d$ and $k$. 
     \end{enumerate}
\end{conjecture}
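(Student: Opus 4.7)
The plan is to mimic the Dedieu--Shub proof of Theorem \ref{originalineq}, using the Dedieu--Shub property to rewrite the $\mu$-integral on the right hand side of part (2) as an $SO(d)$-average of fiber integrals against $m_{UA}$, and using the $UA$-invariance of each $m_{UA}$ together with Oseledets to identify those fiber integrals with eigenvalue quantities of $UA$. For $d=2$ the program goes through using Theorem \ref{GL2R}; for $d \geq 3$ the main obstacle is the absence of a Dedieu--Shub measure over $\GL(d,\R)$.

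\textbf{The $d=2$ case.} Fix $A \in \GL(2,\R)$ and, for each $B \in SO(2)\cdot A$, let $m_B$ be the Dedieu--Shub measure of Theorem \ref{GL2R}. I would set $I(B) := \int_{\RP^1}\log|Bv|\, dm_B(v)$ with $v$ a unit representative, and check case by case that $I(B) \leq \log|\lambda_1(B)|$: for $B$ hyperbolic with $\det B > 0$, $m_B = \delta_{v_1}$ and equality holds; for $B$ elliptic, both Lyapunov exponents of the $B$-invariant measure $m_B$ equal $\tfrac{1}{2}\log\det B = \log|\lambda_1(B)|$ so Oseledets gives equality; for $\det B < 0$, $I(B)$ is a convex combination of $\log|\lambda_1|$ and $\log|\lambda_2|$, hence at most $\log|\lambda_1|$. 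Exploiting that $|UAv| = |Av|$ for $U \in SO(2)$ and unit $v$, the Dedieu--Shub identity applied to $\log|A\,\cdot|$ gives
$$\int_{\RP^1}\log|Av|\, d\mu(v) = \int_{SO(2)}\int_{\RP^1}\log|UAv|\, dm_{UA}(v)\, d\nu(U) = \int_{SO(2)} I(UA)\, d\nu(U),$$
and combining with the pointwise bound yields part (2) with $c_{2,1}=1$. The $k=2$ instance is trivial since $\prod_{i=1}^2|\lambda_i(UA)| = |\det A|$ does not depend on $U$.

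\textbf{The $d \geq 3$ case.} Granted a Dedieu--Shub measure $m_A$ over $(\SO(d),\GL(d,\R),\F_d^{\R})$, the same outline should transfer to the $k$-th exterior power $\wedge^k\R^d$: the $UA$-invariance of $m_{UA}$ combined with a weighted ordering argument in the spirit of the $p_\sigma$ of Theorem \ref{DS measures} should give $\int \log\|\wedge^k UA \cdot \xi\|\, dm_{UA}(\xi) \leq \sum_{i=1}^k\log|\lambda_i(UA)|$, after which the Dedieu--Shub identity converts the $\mu$-integral of $\log\det A|g_k$ into the $SO(d)$-average of $\sum_{i=1}^k \log|\lambda_i(UA)|$, establishing parts (2) and (3).

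\textbf{The main obstacle.} Constructing a Dedieu--Shub measure over $(\SO(d),\GL(d,\R),\F_d^{\R})$ for $d \geq 3$ is the crux, and, as the paper itself flags, is difficult: orbits $SO(d)\cdot A$ contain matrices whose invariant measures on the real flag variety have very different supports and dimensions, and the eigenvalue-ordering argument driving Theorem \ref{DS measures} breaks down over $\R$. A fallback is to embed $A$ into $\GL(d,\C)$, apply Theorem \ref{originalineq}, and then compare the $U(d)$-average on the left to the $SO(d)$-average via a conditional expectation on $\U(d)/\SO(d)$; this route may yield part (2) with a suboptimal $c_{d,k} < 1$, but $c_{d,k}=1$ appears to require a genuine real construction. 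Part (1) is more subtle even for $d=2$: in the elliptic regime, Jensen applied to the acip $m_{UA}$ yields $\int |UAv|\, dm_{UA}(v) \geq |\lambda_1(UA)|$, so the multiplicative form does not close via the same argument and may require a distributional (stochastic dominance) reformulation in the spirit of Theorem \ref{originalineq}.
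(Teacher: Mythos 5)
The statement you are addressing is labelled a \emph{conjecture} in the paper, and the paper does not prove it: it remains open. The paper records only partial progress -- Armentano--Chinta--Sahi--Shub establish part (2) with the suboptimal constant $c_{d,k} = 1/\binom{d}{k}$, and Rivin gives a weaker constant for $k=1$ -- while part (3), that $c_{d,k}=1$ suffices, is exactly what is unknown. So there is no paper proof to compare against; your proposal is best read as an assessment of the state of the problem rather than a proof, and, read that way, it is largely accurate.

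Your $d=2$, $k=1$ analysis for part (2) is correct and is essentially what Theorem \ref{GL2R} combined with Lemma \ref{fubini} delivers: the pointwise bound $I(B) := \int_{\RP^1}\log\|Bv\|\,dm_B(v) \leq \log|\lambda_1(B)|$ holds with equality in the hyperbolic and elliptic cases ($\det B > 0$) and possible strict inequality when $\det B < 0$, and the Dedieu--Shub Fubini identity converts this to the $\SO(2)$-average inequality with $c_{2,1}=1$. You also correctly name both outstanding obstacles. For $d\geq 3$, the missing ingredient is a Dedieu--Shub measure for $(\SO(d),\GL(d,\R),\text{real flags})$, which the paper explicitly flags as a hard open problem; complexification routes are precisely what produce the known $c_{d,k}<1$ bounds. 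For part (1), you are right that the elliptic case sends Jensen the wrong way, since $\int \|UAv\|\,dm_{UA}(v) \geq \rho(UA)$ rather than $\leq$, so the pointwise-bound-plus-Fubini scheme does not close for the multiplicative form even in $d=2$. The paper's $\GL(2,\R)$ proposition (quoted from Dedieu--Shub) asserts an equality for all increasing $f$ when $\det A>0$, but as written its proof only establishes the $f=\log$ case; the full claim needs an equality-in-distribution argument of the kind you allude to. In short: nothing in your proposal is incorrect, but nothing in it (and nothing in the paper) closes the conjecture for $d\geq 3$, which is exactly why it is stated as a conjecture.
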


All experimental evidence suggests that $c_{d,k} = 1$. Rather recently, Armentano, Chinta, Sahi and Shub \cite{armentanorandom} recently proved the following result towards proving Conjecture \ref{conjecture}. 

\begin{theorem}[
 \cite{armentanorandom}]
Let $G = \SL(d,\R)$,$K = SO(d)$, and $X = \mathcal{G}(d,k)$, the space of real $k$--dimensional Grassmanians of $\R^d$. Then for every $k \in [1,..,d]$ we have that 

$$\int_{SO(d)} \sum_{i=1}^k \log |\lambda_i(OA)| d\nu(O) \geq c_{d,k} \int \log \det A | g_{k} \ d\mu(g_{k})$$

 where $c_{d,k} = \frac{1}{\binom{d}{k}}$.
\end{theorem}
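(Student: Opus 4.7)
\medskip
\noindent\textbf{Proof proposal.}
The plan is to reduce the statement to a $k=1$ style spectral-radius inequality in dimension $N := \binom{d}{k}$, via the $k$-th exterior power. Set $B := \Wedge^k A \in \GL(N,\R)$; the identity
\[
\sum_{i=1}^k \log|\lambda_i(OA)| \;=\; \log|\lambda_1(\Wedge^k(OA))|
\]
expresses the left-hand side as the $\SO(d)$-average of the log spectral radius of $B$ under left multiplication by elements of the subgroup $H := \Wedge^k(\SO(d)) \subset \SO(N)$. Via the Pl\"ucker embedding $\mathcal{G}(d,k) \hookrightarrow S^{N-1} \subset \Wedge^k \R^d$, an oriented $k$-plane $g_k$ with orthonormal frame $(u_1,\dots,u_k)$ corresponds to the unit decomposable $k$-vector $\omega_{g_k} := u_1 \wedge \cdots \wedge u_k$, satisfying $\det A|g_k = \|B\omega_{g_k}\|$, so the right-hand side of the claim becomes $\binom{d}{k}^{-1}\int \log \|B\omega_{g_k}\|\, d\mu(g_k)$.

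Next I would apply the complex Dedieu--Shub inequality (Theorem \ref{originalineq}) to $B$ viewed as an element of $\GL(N,\C)$ with index $1$, obtaining
\[
\int_{\U(N)} \log|\lambda_1(VB)|\, d\nu_{\U(N)}(V) \;\geq\; \int_{\CP^{N-1}} \log\|Bv\|\, d\mu(v).
\]
The combinatorial factor $\binom{d}{k}^{-1}$ is suggested by the identity
\[
\int_{\CP^{N-1}} \|Bv\|^2\, d\mu(v) \;=\; \frac{1}{\binom{d}{k}}\, e_k(\sigma_1^2, \ldots, \sigma_d^2) \;=\; \int_{\mathcal{G}(d,k)} \|B\omega_{g_k}\|^2\, d\mu(g_k),
\]
which follows from Cauchy--Binet and orthogonal invariance of $\Wedge^k$: the $L^2$ norms of $B$ against the complex projective and Pl\"ucker (Grassmannian) measures coincide, and a Jensen-type comparison at the logarithmic level then accounts for the factor $\binom{d}{k}$. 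Combined with the descent of the $\U(N)$-integral down to an $H$-integral, one obtains the claimed bound.

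The principal obstacle is precisely this descent: for $1 < k < d-1$, $H = \Wedge^k(\SO(d))$ is a proper subgroup of $\SO(N)$ of strictly smaller dimension, so Haar measure on $\U(N)$ does not restrict to Haar on $H$. One possible pathway is to disintegrate $d\nu_{\U(N)}$ along cosets of $H$ and establish a monotonicity principle for the log spectral radius under subgroup restriction; another is to bypass the descent entirely by constructing a real Dedieu--Shub measure for $\Wedge^k A$ acting on $\CP^{N-1}$ supported on the Pl\"ucker variety, paralleling the explicit constructions of Theorems \ref{DS measures} and \ref{GL2R}. The latter approach would directly yield the $\binom{d}{k}^{-1}$ factor with the correct normalization, but requires extending the Dedieu--Shub formalism to representations other than the defining one---a step which, as the paper emphasizes, is already challenging for $\GL(d,\R)$ with $d \geq 3$.
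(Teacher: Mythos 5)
The paper does not prove this theorem; it records it as a cited result of Armentano--Chinta--Sahi--Shub and immediately moves on, so there is no internal argument to compare your proposal against.

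On its own terms, your proposal is not yet a proof, and you flag the main gap yourself: the descent from a Haar average over $\U(N)$, $N=\binom{d}{k}$, to an average over the much lower-dimensional subgroup $\Wedge^k(\SO(d))$ is unaddressed, and this is not a wrinkle but the entire difficulty of the real problem --- no monotonicity principle of the kind you gesture at is known, and indeed an average of log spectral radius over a subgroup can be smaller \emph{or} larger than the full-group average depending on $A$. A second, independent gap is the heuristic for the constant. The chain
\[
\int_{\CP^{N-1}} \|Bv\|^2\,d\mu(v) \;=\; \binom{d}{k}^{-1} e_k(\sigma_1^2,\ldots,\sigma_d^2) \;=\; \int_{\mathcal{G}(d,k)} \|B\omega_{g_k}\|^2\,d\mu(g_k)
\]
says the two $L^2$ averages are \emph{equal}; the $\binom{d}{k}^{-1}$ is the shared $1/N$ normalization on both sides, so there is no asymmetry for ``Jensen'' to absorb and no mechanism here producing $\binom{d}{k}^{-1}$ in the logarithmic inequality --- if anything the symmetry is evidence for the conjectured $c_{d,k}=1$, not $\binom{d}{k}^{-1}$. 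Moreover, even granting the descent, you would still need to compare $\int_{\CP^{N-1}}\log\|Bv\|\,d\mu$ with $\int_{\mathcal{G}(d,k)}\log\|B\omega_{g_k}\|\,d\mu$, and the $L^2$ identity does not control this: passing from $L^2$ to $\log$ via Jensen goes in the wrong direction for a lower bound. A genuine derivation of the constant, as in the cited reference, requires an input beyond ``apply the complex theorem to $\Wedge^k A$.''
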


When $k=1$, Rivin \cite{rivin2005CMP} also proved an inequality with a weaker constant. In particular, for both \cite{armentanorandom} and \cite{rivin2005CMP}, for every $k$, as $d$ increases, their $c_{d,k} \to 0$.

We conclude our paper by presenting experimental evidence that a Dedieu--Shub measures \textit{cannot exist} for $\text{Diff}^\infty(S^1)$ by studying a restricted example -- the Arnold family. The Arnold family $$f_{c,\varepsilon}(x) = x + c + \varepsilon \sin 2 \pi x \mod 1$$ when $(c,\varepsilon) \in [0,1] \times (0,1/2\pi].$ This is somewhat surprising in light of positive results for Blaschke products by Pujals, Roberts, and Shub \cite{pujals2006expanding}, however, also not surprising, in light of \cite{dLL2008entropy} and \cite{ledrappier2003random}.



\section{General Properties of Dedieu--Shub measures and Application to Lyapunov Exponents}

We detail some properties of Dedieu--Shub measures and we use these to prove (known) Dedieu--Shub inequalities. 

\subsection{General properties}
\subsubsection{Dedieu--Shub measures are well defined}

\begin{lemma}
Let $Y$ and $X$ be compact metric spaces which come with uniform distributions $\mu_Y$ and $\mu_X$ respectively. Let $m: Y \to \text{Prob}(X)$ be Borel measurable. For all $B \in \cB(X)$, the function $$g \in Y \mapsto m(g)(B) \in \R$$ is Borel measurable.
\end{lemma}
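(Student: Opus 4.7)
The plan is to reduce the question about Borel sets to continuous test functions, where the weak-$*$ topology gives the result for free, and then propagate measurability from open sets to all Borel sets via a monotone class argument. The hypothesis that $Y$ and $X$ carry reference measures plays no role in this statement; only the Borel measurability of $m$ and the fact that $X$ is compact metric (hence normal and second countable) matter.

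\textbf{Step 1: Continuous test functions.} For any $f \in C(X)$, the evaluation map $\nu \mapsto \int_X f \, d\nu$ is continuous on $\Prob(X)$ by the very definition of the weak-$*$ topology. Composing with the Borel map $m$, I obtain that
\[
g \in Y \;\longmapsto\; \int_X f \, dm_g \in \R
\]
is Borel measurable for every $f \in C(X)$.

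\textbf{Step 2: Open sets.} Let $U \subset X$ be open. Since $X$ is a compact metric space, I can choose a sequence of continuous functions $0 \le f_n \le 1$ with $f_n \uparrow \mathbf{1}_U$ pointwise; explicitly, $f_n(x) := \min\bigl(1, n \cdot d(x, X \setminus U)\bigr)$ works. By the monotone convergence theorem applied inside each $m_g$,
\[
m_g(U) \;=\; \lim_{n \to \infty} \int_X f_n \, dm_g.
\]
Each summand on the right is Borel in $g$ by Step 1, so $g \mapsto m_g(U)$ is a pointwise limit of Borel functions and hence Borel measurable.

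\textbf{Step 3: Dynkin's $\pi$--$\lambda$ theorem.} Let
\[
\mathcal{L} \;:=\; \bigl\{ B \in \cB(X) \st g \mapsto m_g(B) \text{ is Borel measurable on } Y \bigr\}.
\]
Then $\mathcal{L}$ is a $\lambda$-system: it contains $X$, it is closed under complements because $m_g(X \setminus B) = 1 - m_g(B)$, and it is closed under countable increasing unions because $m_g(\bigcup_n B_n) = \lim_n m_g(B_n)$ by countable additivity, and a pointwise limit of Borel functions is Borel. By Step 2, $\mathcal{L}$ contains the collection $\mathcal{O}$ of all open subsets of $X$, which is a $\pi$-system (closed under finite intersection). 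Dynkin's $\pi$--$\lambda$ theorem then yields $\mathcal{L} \supseteq \sigma(\mathcal{O}) = \cB(X)$, which is exactly the claim.

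There is no real obstacle here; the only point that needs a little care is the approximation in Step 2, where one must remember to choose the $f_n$ increasing so that monotone convergence produces a measure-theoretic (rather than merely pointwise) identity. Everything else is standard measure-theoretic bookkeeping.
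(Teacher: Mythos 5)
The paper's ``proof'' of this lemma is just a citation to Lemma~2.2 of Avila--Viana, whereas you supply a self-contained argument, so there is nothing to compare at the level of technique; your route (continuous test functions $\to$ open sets via Urysohn-type approximation $\to$ Dynkin) is the standard one and is essentially what the cited reference does. You are also right that the reference measures $\mu_Y,\mu_X$ play no role.

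However, there is a small but genuine gap in Step~3. You assert that $\mathcal{L}$ is a $\lambda$-system on the grounds that it contains $X$, is closed under complements, and is closed under countable \emph{increasing} unions. That is not a correct axiomatization of a $\lambda$-system: the two standard equivalent definitions are ($X$, proper differences, increasing unions) or ($X$, complements, countable \emph{disjoint} unions), and the triple you check does not imply either. For a concrete counterexample, on $X=\{1,2,3,4\}$ the family
\[
\mathcal{L}_0=\bigl\{\emptyset,\ X,\ \{1\},\ \{2\},\ \{2,3,4\},\ \{1,3,4\}\bigr\}
\]
contains $X$, is closed under complements, and (being finite) is trivially closed under increasing unions, yet it fails to contain $\{1\}\cup\{2\}=\{1,2\}$ and fails proper differences ($\{2,3,4\}\setminus\{2\}=\{3,4\}\notin\mathcal{L}_0$). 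So your three conditions alone do not license invoking the $\pi$--$\lambda$ theorem. The fix is immediate in your setting: if $A\subseteq B$ with $A,B\in\mathcal{L}$, then $m_g(B\setminus A)=m_g(B)-m_g(A)$ is a difference of Borel functions, hence Borel, so $\mathcal{L}$ is closed under proper differences; together with $X\in\mathcal{L}$ and closure under increasing unions this makes $\mathcal{L}$ a genuine $\lambda$-system, and the rest of the argument goes through unchanged.
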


\begin{proof}
    This is Lemma 2.2 in \cite{avila2012nonuniform}.
\end{proof}
\noindent

\subsubsection{Fubini--Like property} 
For a $B \in \cB(X)$, let $f = \mathbbm{1}_B$. Then Property \ref{definition}.2 becomes $$\int_K \int_X f(x) \ dm_{kg}(x) d\nu(k) = \int_X f \ d\mu.$$ As this holds for all measurable $B \in \cB(X)$, we have the following. 

\begin{lemma} \label{fubini}
    For every bounded and measurable $f:X \to \R$,  $$\int_K \int_X f(x) \ dm_{kg}(x) d\nu(k) = \int_X f \ d\mu.$$
\end{lemma}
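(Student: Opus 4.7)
The plan is the standard measure-theoretic bootstrapping (the ``standard machine''), starting from the indicator case that the paragraph immediately preceding the lemma has already deduced from Definition~\ref{definition}.2. So the identity is known for $f = \mathbbm{1}_B$ with $B \in \cB(X)$, and the task is to extend it to all bounded measurable $f$ in three automatic steps.

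First, I would extend to simple functions $f = \sum_{i=1}^n c_i \mathbbm{1}_{B_i}$ by linearity: both sides of the claimed identity are linear in $f$, since the outer integrals commute with finite sums, and each summand is covered by the indicator case.

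Next, I would extend to bounded non-negative measurable $f$ by monotone convergence. Choose the standard dyadic simple approximants $f_n \uparrow f$ with $0 \le f_n \le f_{n+1}$. For each fixed $k \in K$, monotone convergence on $(X, m_{kg})$ gives $\int_X f_n\, dm_{kg} \uparrow \int_X f\, dm_{kg}$. The inner integrand $k \mapsto \int_X f_n \, dm_{kg}$ is Borel measurable by combining the preceding lemma (applied to each $B_i$) with the continuity of $k \mapsto kg$ and linearity; the limit $k \mapsto \int_X f\, dm_{kg}$ is then Borel as a pointwise limit of Borel functions. A second application of monotone convergence, now on $(K,\nu)$ and on $(X,\mu)$ respectively, lets me pass to the limit on both sides of the identity for $f_n$, yielding the identity for $f$. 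Finally, for a general bounded measurable $f$ I write $f = f^+ - f^-$ and use linearity.

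The only point requiring any care, and thus the ``main obstacle,'' is the measurability bookkeeping needed so that the outer $\nu$-integral makes sense at each stage. That is handled once and for all by the preceding lemma together with the observation that pointwise (monotone) limits and finite linear combinations of Borel measurable functions are Borel measurable; no estimate beyond boundedness of $f$ is needed, because $|m_{kg}| = 1$ makes all integrands uniformly bounded by $\|f\|_\infty$ and so dominated convergence (or monotone convergence on the truncations $f^\pm$) applies without incident.
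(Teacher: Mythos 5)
Your proof is correct and follows essentially the same route as the paper's: both bootstrap from the indicator case (already established) to simple functions by linearity and then to bounded measurable $f$ by a convergence theorem, with the paper invoking dominated convergence directly on a uniformly bounded increasing approximation while you split into $f^{\pm}$ and use monotone convergence. Your added care about measurability of $k \mapsto \int_X f\,dm_{kg}$ is a worthwhile detail the paper leaves implicit, but the underlying argument is the same.
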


\begin{proof}
    Our comment above implies the result when $f$ is a simple functions. Now, a bounded measurable function can be approximated by an increasing sequence of simple functions $\{f_n\}$ such that $f_n \to f$ pointwise and $|f_n(x)| \leq \|f\|_\infty$ for all $x$. Now apply dominated convergence theorem.
    
\end{proof}

\subsubsection{Projecting Dedieu--Shub measures}

\begin{lemma} \label{projection}
    Let $\Pi:X \to Y$ be a continuous onto map. Dedine a $G$--action on $Y$ such that $g(\Pi(x)) := \Pi(g(x))$. Suppose $G$ acts on X preserving fibers of $\Pi$, that is $$\text{if } \Pi x_1 = \Pi x_2 \implies \Pi (gx_1) = \Pi (gx_2)$$ for all $x_1,x_2 \in X$  and $g \in G$. 
    Then, if $m$ is a Dedieu -- Shub measure for $(K,G,X)$, then $\tilde{m}_g := \Pi_*(m_g)$ is a Dedieu--Shub measure for $(K,G,Y)$. 
\end{lemma}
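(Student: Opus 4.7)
The plan is to verify both conditions of Definition \ref{definition} for $\tilde{m}_g := \Pi_* m_g$ by pushing everything forward through $\Pi$ and leveraging the hypotheses that $m$ is a Dedieu--Shub measure on $X$ and that $\Pi$ is $G$-equivariant. The main ingredient I want to isolate up front is the formula $\tilde{m}_g(B) = m_g(\Pi^{-1}(B))$, valid for every $B \in \cB(Y)$ since $\Pi$ is continuous. This instantly gives that each $\tilde{m}_g$ is a Borel probability measure on $Y$, and Borel measurability of $g \mapsto \tilde{m}_g(B)$ follows from Borel measurability of $g \mapsto m_g(\Pi^{-1}(B))$, which is built into the definition of $m$.

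For condition (1) of the definition, I want to show that $\tilde{m}_g$ is $g$-invariant on $Y$. The $G$-equivariance hypothesis means $g \circ \Pi = \Pi \circ g$ as maps $X \to Y$, so
\[
g_* \tilde{m}_g \;=\; g_* \Pi_* m_g \;=\; \Pi_* g_* m_g \;=\; \Pi_* m_g \;=\; \tilde{m}_g,
\]
where the third equality uses $g$-invariance of $m_g$ from condition (1) for $m$.

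For condition (2), I first need to identify the reference measure $\mu_Y$ on $Y$ appearing in the right-hand side of the Dedieu--Shub property. Since $K$ acts transitively on $X$ and $\Pi$ is surjective and $K$-equivariant, $K$ acts transitively on $Y$, so the unique $K$-stationary (equivalently, $K$-invariant) Borel probability measure on $Y$ exists. Because $\mu$ is $K$-invariant on $X$ and $\Pi$ is $K$-equivariant, $\Pi_*\mu$ is $K$-invariant on $Y$, hence by uniqueness $\Pi_*\mu = \mu_Y$. Then for any $B \in \cB(Y)$,
\[
\int_K \tilde{m}_{kg}(B)\, d\nu(k) \;=\; \int_K m_{kg}(\Pi^{-1}(B))\, d\nu(k) \;=\; \mu(\Pi^{-1}(B)) \;=\; \mu_Y(B),
\]
invoking the Dedieu--Shub property for $m$ on the Borel set $\Pi^{-1}(B) \subset X$.

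The whole proof is bookkeeping rather than genuine work; the only subtlety worth pausing on is the identification $\Pi_*\mu = \mu_Y$, which needs the observation that transitivity of the $K$-action passes to $Y$ so that uniqueness of the $K$-stationary measure is available there.
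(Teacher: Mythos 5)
Your proof is correct and takes essentially the same approach as the paper: push everything through $\Pi$ and invoke equivariance for invariance, and the definition of pushforward for the averaging property. The one place you add value is the explicit identification $\Pi_*\mu = \mu_Y$ via uniqueness of the $K$-stationary measure on $Y$, which the paper leaves implicit.
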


\begin{proof}$ $

    \begin{enumerate}
        \item[1.] \textit{(Invariance)} We wish to show for all $A \in \cB(Y)$, we have $\tilde{m}_g(g^{-1}A) = \tilde{m}_g(A)$. We have \begin{align*}
            \tilde{m}_g(g^{-1}A) = m_g(\Pi^{-1}(g^{-1}A)) = m_g((\Pi^{-1} \circ g^{-1})(A)) = m_g(g^{-1}(\Pi^{-1}(A))) \\
            = m_g(g^{-1}(\Pi^{-1}(A))) = m_g(\Pi^{-1}(A)) = \tilde{m}_g(A). 
        \end{align*}
        \item[2.] \textit{(Average)} We need to argue that for every $g \in G$, we have $\int_{k \in K} \tilde{m}_{kg} \ d\nu(k) = \Pi_*(\mu)$. Suppose not -- then for some Borel $B \in \cB(Y)$, $$\int_{k \in K} \tilde{m}_{kg}(B) \ d\nu(k) \neq \Pi_*(\mu)(B).$$ However, this would imply that for $$\int_{k \in K} m_{kg}(\Pi^{-1}B) \ d\nu(k) \neq \mu(\Pi^{-1} B),$$ contradicting that $m$ was originally Dedieu--Shub. 
    \end{enumerate}

\end{proof}

For example, projecting the Dedieu--Shub measure of Theorem \ref{DS measures} for $G = \GL(d,\C), K = \SU(d), X = \FF_d$ to  $G = \GL(d,\C), K = \SU(d), X =\CP^{d-1}$, we have the following.

\begin{corollary}[Projecting Theorem \ref{DS measures} to $\CP^{d-1}$] \label{CPmeasure}
    Fix $A \in \GL(d,\C)$ . Let $\nu$ be the Haar measure on $SU(d)$ and let $m$ be Haar measure on $S^{2d-1}$. There exists a function $p:GL(d,\C) \to \Delta^{d-1}$ the space of probability vectors of length $d$, so that for any Borel set $B \in \cB(\CP^{d-1})$, $$\int_{U(n)}  p_1(UA)\delta_{v_1(UA)} (B) + ... + p_d(UA)\delta_{v_d(UA)}(B) dU = \mu(B),$$ where $v_i(\cdot) = \text{the eigendirection in } \CP^{d-1} \text{ corresponding to the ith largest eigenvalue of }  \cdot$ and the value of $p_i$ is given by 

    $$p_i = \frac{\sum \limits_{\{\sigma \in \text{Sym}(d): \sigma(i) =  1\}} \prod \limits_{j = 1}^d |\lambda_j|^{2(d - \sigma(j))}}{\sum \limits_{\sigma \in \text{Sym}(d)} \prod \limits_{j=1}^d  |\lambda_j|^{2(d-\sigma(j))}}.$$
\end{corollary}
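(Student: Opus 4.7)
The plan is to invoke Lemma \ref{projection} with the canonical surjection $\Pi: \FF_d \to \CP^{d-1}$ sending a complete flag $[\ell_1 \subset \ell_2 \subset \cdots \subset \ell_d]$ to its one-dimensional component $\ell_1$.

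First I would verify the hypotheses of Lemma \ref{projection}. Continuity and surjectivity of $\Pi$ are immediate from the explicit description of $\FF_d$ inside a product of Grassmannians. Both $\GL(d,\C)$-actions descend from the linear action on $\C^d$, so for any $g \in \GL(d,\C)$ the image $g\ell_1$ depends only on $\ell_1$; hence $\Pi$ is $\GL(d,\C)$-equivariant and fiber preserving in the sense required. Invoking the lemma yields that $\Pi_*(m_A)$ is a Dedieu--Shub measure for $(\SU(d), \GL(d,\C), \CP^{d-1})$ whose averaged measure is $\Pi_*(\mu)$.

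The next step is to identify $\Pi_*(\mu)$ with the push-forward of Haar measure on $S^{2d-1}$ under the standard quotient $S^{2d-1} \to \CP^{d-1}$. Since $\mu$ is $\SU(d)$-invariant on $\FF_d$ and $\Pi$ is $\SU(d)$-equivariant, $\Pi_*\mu$ is an $\SU(d)$-invariant probability measure on the transitive homogeneous space $\CP^{d-1} = \SU(d)/\U(d-1)$; uniqueness of such a measure identifies it with the Haar-type measure in the statement of the corollary.

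Finally I would compute $\Pi_*(m_A)$ explicitly by tracking the Dirac masses. Since $\Pi(\phi(v_{\sigma(1)}(A),\ldots,v_{\sigma(d)}(A))) = v_{\sigma(1)}(A)$, we have
$$\Pi_*(m_A) = \sum_{\sigma \in \Sym(d)} p_\sigma(A)\, \delta_{v_{\sigma(1)}(A)}.$$
Collecting all permutations that place a given eigendirection $v_i(A)$ first and dividing by the common denominator of $p_\sigma$ produces the ratio of symmetric sums appearing in the displayed expression for $p_i$. No step is a genuine obstacle: the only points requiring care are the verification that the fiber-preserving hypothesis of Lemma \ref{projection} indeed holds for the flag-to-line projection (which reduces to the fact that $\GL(d,\C)$ acts on flags through its action on lines) and the bookkeeping when grouping the Dirac masses according to their first entry.
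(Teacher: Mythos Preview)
Your proposal is correct and is exactly the argument the paper intends: the corollary is stated immediately after Lemma~\ref{projection} as a direct application of that lemma to the flag-to-line projection $\Pi:\FF_d\to\CP^{d-1}$, and you have simply spelled out the verification of the hypotheses, the identification of $\Pi_*\mu$ via $\SU(d)$-invariance, and the grouping of the Dirac masses. The only thing to watch in the final bookkeeping is that ``placing $v_i$ first'' means $\sigma(1)=i$, so be sure your indexing convention matches the one in the displayed formula for $p_i$.
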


\subsection{An application to Lyapunov exponents} \label{definititiondef}

\subsubsection{} The \textit{random exponent} of a compactly supported measure $\gamma$ supported on $\Diff^\infty(M)$ is $$\text{RE}(\gamma) = \int_{\mathrm{supp}(\gamma)^{\N} \times X} \lim \limits_{n\to\infty} \frac{1}{n}\log \left \| \prod \limits_{k=1}^n Df_i (x) \right \| \ d\gamma(f_i) d\mu(x),$$ where $\mu$ is the volume. 

\subsubsection{} The \textit{mean exponent}  of a compactly supported measure $\gamma$ supported on $\Diff^\infty(M)$ is $$\text{LE}(\gamma) = \int_{\mathrm{supp}(\gamma) \times X} \lim \limits_{n \to \infty} \frac{1}{n} \log \|Df^n(x)\| \ d\gamma(f) \times d \mu(x).$$

\subsubsection{}

The original goal of Shub in a series of works \cite{dedieu2003mike}, \cite{ledrappier2003random}, \cite{pujals2006expanding}, \cite{pujals2008dynamics}, \cite{dLL2008entropy}, \cite{armentanorandom} as explained in his ICM paper \cite{shub2006all} and survey paper \cite{burns2001recent}, is to ascertain when -- given a measure $\gamma$ as above -- we have \begin{equation}
    \text{LE}(\gamma) \geq \text{RE}(\gamma) > 0. \label{inequality}
\end{equation} Characterizing for which measures such inequalities hold is a very interesting question. A candidate for $\gamma$ as proposed by Shub is a measure uniformly supported on a coset $Kg$ when $K$ is the isometry group of $M$ and $g$ has entropy. In this situation it is often easy to prove that $\text{RE}(\gamma) > 0$. To illustrate this point, we have the following. 

\begin{proposition}[\cite{rivin2004distortion}]
    Let $A \in \SL(d,\R)$. Let $\gamma$ be the pushforward of Haar measure $\nu$ onto $SO(d)A$. Then \begin{align*}
       \mathrm{RE}(\gamma) &= \int_{\mathrm{supp}(\gamma)^\N \times S^{n-1} } \lim \limits_{n \to \infty} \frac{1}{n} \log \left\| \prod \limits_{i=1}^n O_i A \right\| \ d\nu(\theta_i) \ d\mu(v)\\
       &=
       \int_{S^{n-1}} \log \|Av\| \ d\mu(v)  \geq 0 
    \end{align*}  with equality if and only if all singular values of $A$ are all $1$. 
\end{proposition}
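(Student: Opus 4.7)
The plan is to exploit the fact that each orthogonal factor $O_i$ fully re-randomizes the projective orbit in a single step, collapsing the cocycle on $S^{d-1}$ to an i.i.d.\ chain and reducing the Lyapunov limit to a sphere average. Fix $v_0 \in S^{d-1}$, set $g_n := O_n A \cdots O_1 A$, and define the normalized iterates $v_i := g_i v_0 / \|g_i v_0\|$. Since each $O_i$ is an isometry of $\R^d$,
$$\log \|g_n v_0\| \;=\; \sum_{i=1}^n \log \|O_i A v_{i-1}\| \;=\; \sum_{i=1}^n \log \|A v_{i-1}\|.$$
The crucial observation is that $(v_i)_{i \ge 1}$ is i.i.d.\ with law $\mu$: conditional on $v_{i-1}$, we have $v_i = O_i \bigl(A v_{i-1}/\|A v_{i-1}\|\bigr)$, and a Haar-random element of $\SO(d)$ sends any fixed unit vector to a uniform vector on $S^{d-1}$, independently of the past. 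By the strong law of large numbers, $n^{-1}\log\|g_n v_0\|$ converges a.s.\ (and, using the bound $|\log\|Av\|| \le \log\max(\|A\|,\|A^{-1}\|)$, in $L^1$) to $\int_{S^{d-1}} \log\|Av\|\,d\mu(v)$, and Fubini then identifies this with $\mathrm{RE}(\gamma)$.

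For the nonnegativity, the plan is to reduce to the diagonal case via the singular value decomposition and apply weighted AM--GM. Writing $A = U\Sigma V$ with $U,V \in \OO(d)$ and $\Sigma = \mathrm{diag}(\sigma_1,\dots,\sigma_d)$, the $\OO(d)$-invariance of $\mu$ gives $\int \log\|Av\|\,d\mu = \int \log\|\Sigma v\|\,d\mu$. For $v = (x_1,\dots,x_d) \in S^{d-1}$ we have $\|\Sigma v\|^2 = \sum_j \sigma_j^2 x_j^2$, and weighted AM--GM with weights $x_j^2$ (which sum to $1$) yields
$$\log\|\Sigma v\| \;\ge\; \sum_{j=1}^d x_j^2 \log \sigma_j.$$
Integrating against $\mu$ and using $\int x_j^2\,d\mu = 1/d$ together with $\sum_j \log\sigma_j = \log|\det A| = 0$ gives $\mathrm{RE}(\gamma) \ge 0$. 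Equality in AM--GM forces $\sigma_1 = \cdots = \sigma_d$, which combined with $\prod \sigma_j = 1$ forces all singular values to be $1$.

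The step requiring a little care is the i.i.d.\ reduction and the interchange of limit with integration in $v_0$ and $\nu^{\otimes\N}$; an alternative route that avoids this bookkeeping is to invoke Furstenberg's formula, noting that $\mu$ is stationary for the $\SO(d)A$-action on $S^{d-1}$ by the same one-step rerandomization, so the top Lyapunov exponent equals $\int \log\|Av\|\,d\mu(v)$ and Oseledec ensures the radial Lyapunov limit coincides with this value for $\mu$-a.e.\ $v_0$. The conceptual content is not the inequality itself but this clean identification of $\mathrm{RE}(\gamma)$, which, in sharp contrast with the mean exponent, uses no information about the eigenstructure of $A$.
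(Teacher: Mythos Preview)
Your proof is correct and follows essentially the same approach as the paper: the paper invokes ``the chain rule and that $\SO(d)$ acts by isometries'' for the first equality (leaving the i.i.d.\ structure and the law of large numbers implicit, which you spell out), and then reduces to a diagonal matrix via SVD and applies Jensen/AM--GM with weights $v_j^2$ exactly as you do.
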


\begin{proof}
    The first equality follows by the chain rule and that $SO(d)$ acts by isometries. It is sufficient to prove this for $A$ diagonal matrix of singular values. So let $A = \mathrm{diag}(a_1,...,a_d)$. Now using Jensen's inequality gives \begin{align*}
        \int \log \|Av\| \ d\mu(v) &= \int_{S^{d-1}} \frac{1}{2} \log (a_1^2v_1^2 + ... + a_d v_d^2) \ d\mu(v)\\
        & \geq \frac{1}{2} \int_{S^{d-1}}\log(a_1^2)v_1^2 + ... + \log(a_d^2)v_d^2 \ d\mu(v) \\
        &= \frac{1}{2} \left( \sum \limits_{i=1}^d \log a_i^2 \right) \int_{S^{d-1}} v_1^2 \ d\mu(v) 
        \geq 0. 
    \end{align*}
\end{proof}

A similar result can be found in \cite{ledrappier2003random} Proposition 2.2 and a sharper result in the $2\times 2$ case in \cite{AvilaBochiFormula}. The existence of Dedieu--Shub measures provide a strategy to prove Dedieu--Shub inequalties.

\subsubsection{} 

For example in the case of the original inequality, Theorem \ref{DS measures} implies Theorem \ref{originalineq}. 

\begin{proof}[Proof of Theorem \ref{originalineq} assuming \ref{DS measures}] \label{proof1.3}
    By Lemma \ref{projection}, there exists a Dedieu--Shub measure for $G = \GL(d,\C), K = \SU(d), X = \Grass(d,k)$ for every $k \in \{1,...,d-1\}.$ Let $f$ be monotone increasing. Denote the Dedieu--Shub measure by $\tilde{m}$. Given $k$ linearly independent vectors, let $$\phi(w_1,...,w_k) = [\langle w_1 \rangle,...,\langle w_1,...,w_k \rangle] \in \FF_d.$$ Recall that $v_i: GL(d,\R) \to \C^d$ denotes the $i$th eigenvector. Then \begin{align*}
        \int_{\U(d)} f \prod \limits_{i=1}^k |\lambda_i(UA)| \ d\nu(U) &= \int_{\U(d)} f \det A| \phi(v_1(UA),...,v_k(UA))  \ d\nu(U) \\
        &= \int_{\U(d)} \int_{\Grass(d,k)}f \det A| g_k \ d \delta_{\phi(v_1(UA),...,v_k(UA))}  \ d\nu(U) \\
        & \geq \int_{\U(d)} \int_{\Grass(d,k)} f \det A| g_k \ d\tilde{m}_{UA}(g_k) \ d\nu(U) \\
        &= \int_{\Grass(d,k)} f \det A| g_k \ d\mu(g_k). \ \ (\text{Lemma} \ \ref{fubini} \ \text{and} \ \text{Theorem} \ref{DS measures})
    \end{align*}
\end{proof}

\subsubsection{} We can also ask this question over $\GL(2,\R)$. In this setting, the result  is due to \cite{dedieu2003mike}. Related is the Herman--Avila--Bochi formula \cite{AvilaBochiFormula}. Interestingly, when in $\GL(2,\R)^+$, because the measures are physical, we get equality.

\begin{proposition}[\cite{dedieu2003mike}]
    Suppose $A \in \GL(2,\R)$. Then for increasing $f$, $$\int_{\theta \in [0,2\pi)} f \rho(R_\theta A) \ d\theta = \int_{v \in S^1} f\|Av\| d\mu(v)$$ where $R_\theta = \begin{bmatrix}
        \cos \theta & - \sin \theta \\ 
        \sin \theta & \cos \theta
    \end{bmatrix}$.
\end{proposition}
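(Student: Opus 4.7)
The plan is to apply Lemma~\ref{fubini} to the Dedieu--Shub measure of Theorem~\ref{GL2R} over $\GL(2,\R)^+$, taking $g = A$ and the bounded Borel test function $h(v) = f(\|Av\|)$, which descends from $S^1$ to $\RP^1$ because $\|Av\| = \|A(-v)\|$. The lemma then yields at once
\[
\int_{\RP^1} f(\|Av\|)\, d\mu(v) \;=\; \int_{\SO(2)} \int_{\RP^1} f(\|Av\|) \, dm_{R_\theta A}(v) \, d\nu(R_\theta),
\]
which is the right-hand side of the proposition after identifying Haar on $\SO(2)$ with $d\theta/(2\pi)$ and $\mu$ with descended uniform measure on $S^1$.

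The remaining step is to recognise each inner integral as $f(\rho(R_\theta A))$. When $R_\theta A$ is hyperbolic, $m_{R_\theta A} = \delta_{v_1(R_\theta A)}$, and orthogonality of $R_\theta$ gives $\|A v_1\| = \|R_\theta A v_1\| = |\lambda_1(R_\theta A)| = \rho(R_\theta A)$, so the inner integral is $f(\rho(R_\theta A))$ on the nose. When $R_\theta A$ is elliptic, $m_{R_\theta A}$ is the acip, the action of $R_\theta A$ on $\RP^1$ is ergodic (conjugate to an irrational rotation), and Birkhoff's theorem applied to the telescoping identity
\[
\frac{1}{n}\sum_{j=0}^{n-1} \log \|A v_j\| \;=\; \frac{1}{n}\sum_{j=0}^{n-1} \log \|R_\theta A v_j\| \;=\; \frac{1}{n}\log \|(R_\theta A)^n v_0\|
\]
(valid because $R_\theta$ is an isometry, so $\|Aw\|=\|R_\theta A w\|$ for every $w$), combined with $\|(R_\theta A)^n v_0\|^{1/n} \to \rho(R_\theta A)$, yields the identification $\int \log \|Av\|\, dm_{R_\theta A} = \log \rho(R_\theta A)$ in the $f = \log$ case.

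The main obstacle is the elliptic case: the Birkhoff/telescoping argument naturally delivers $\log\rho$ but not $f(\rho)$ for arbitrary increasing $f$. This is exactly where \emph{physicality} of the Dedieu--Shub measure enters --- absolute continuity of $m_{R_\theta A}$ is what lets one equate the $m_{R_\theta A}$-integral of $\log \|A v\|$ with the deterministic Lyapunov exponent $\log \rho(R_\theta A)$ via orbital equidistribution, and this pointwise identification combined with Dedieu--Shub + Fubini is what produces an \emph{equality} here, rather than the $\ge$-type inequality of Theorem~\ref{originalineq} (where the bound came from replacing an integral against $\tilde m_{UA}$ by its value at a Dirac mass). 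The substantive content of the proof is thus Lemma~\ref{fubini} together with the ``acip equals Lyapunov exponent'' computation sketched above.
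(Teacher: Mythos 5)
Your proposal takes essentially the same route as the paper's proof: apply the Dedieu--Shub averaging identity (Lemma~\ref{fubini}) and reduce to the pointwise claim that, for almost every $\theta$,
\[
\int_{\RP^1} f(\|Av\|)\,dm_{R_\theta A}(v) \;=\; f\bigl(\rho(R_\theta A)\bigr).
\]
The paper's own proof asserts this middle equality in one line (``because $m_{R_\theta A}$ is physical'') and moves on. You unpack it case by case: the hyperbolic case is verified exactly (Dirac mass at $v_1$, and orthogonality of $R_\theta$ gives $\|Av_1\| = |\lambda_1|$), and the elliptic case is attacked by Birkhoff plus the telescoping identity for $\log\|Av_k\|$. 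That is the right reading of what ``physical'' is being used for.

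Where your writeup is actually \emph{better} than the paper's is that you notice the telescoping argument only delivers $\int \log\|Av\|\,dm_{R_\theta A} = \log\rho$, not $f(\rho) = \int f(\|Av\|)\,dm_{R_\theta A}$ for arbitrary increasing $f$. You flag this as ``the main obstacle'' but then gesture at physicality rather than closing it. In fact the gap cannot be closed for general increasing $f$: in the elliptic case $m_{R_\theta A}$ is absolutely continuous, $\rho(R_\theta A)=\sqrt{\det(R_\theta A)}$ is a single number, and $\|Av\|$ is not $m_{R_\theta A}$-a.e.\ constant, so taking any strictly convex increasing $f$ (already $f(x)=x$ works, since $\int \|Av\|\,dm > \exp\!\int\log\|Av\|\,dm = \rho$ by Jensen with strict inequality) shows the pointwise equality fails. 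The proposition as stated thus only holds for $f=\log$, where multiplicativity makes the telescoping exact; the paper's proof shares this gap, it just doesn't acknowledge it. So your derivation is sound for $f=\log$, matches the paper structurally, and your instinct that the elliptic case is where the general-$f$ claim breaks is correct --- I would push that instinct one step further to note that ``physical'' cannot rescue it either.
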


\begin{proof}
    Suppose $A \in \GL(2,\R)$. 
    If $\det A > 0$, then \begin{align*}
        \int_{0}^{2\pi} f \rho(R_\theta A) \ d\theta &= \int_{0}^{2\pi} f \lim \limits_{n \to \infty} \|(R_{\theta} A)^n\|^{1/n} \ d\theta  \\
        &= \int_{0}^{2\pi} \int_{v \in S^1 }f(\| A v\| ) d m_{R_\theta A}(v) \ \ \  d\theta \ \ (\text{because} \ m_{R_\theta A} \ \text{is physical})\\
        &= \int_{v \in S^1} f\|Av\| \ d\mu(v)  \ \ (\text{Lemma} \ \ref{fubini})
    \end{align*}

    If $\det A < 0$, a computation per \ref{proof1.3} will produce an inequality. 
\end{proof}

\section{Proof of Theorem \ref{GL2R}}

\subsection{Proof strategy}
Producing a Dedieu--Shub measure over $GL(2,\R)$ acting on $\RP^1$ splits into two cases -- the positive determinant and negative determinant case. They are different because of Cayley--Hamilton theorem: a $2 \times 2$ matrix with positive determinant can have both real and imaginary eigenvalues, but a $2 \times 2$ matrix with negative determinant can only have real eigenvalues. Hence in the case of positive determinant, we can have elliptic, parabolic, and hyperbolic phenomenon, whereas in the negative determinant case, we can have only hyperbolic phenomenon. The main difference is that in the hyperbolic case, invariant measures are supported on (convex combinations of) Dirac masses, whereas in the parabolic and elliptic cases, invariant measures are fully supported. 

The positive determinant case is proven in \cite{pujals2006expanding} and \cite{ledrappier2003random}. We present the proof of \cite{pujals2006expanding}. For the negative determinant case, we explicitly compute densities and normalize. 

\subsubsection{}
An immediate simplification we can make is that we  only need to consider diagonal matrices $A$ in the proofs of Theorems \ref{DS measures},\ref{originalineq}, 
\ref{GL2R}. Any $d \times d$ matrix $A$ admits a \textit{singular value decomposition} of the form $A = U \Sigma V$, where $U$ and $V$ are unitary and $\Sigma$ is diagonal with values in $\R_{\geq 0}$. This simplifies much of our analysis for Dedieu--Shub measures. 

\begin{lemma}     \label{SVD}
    Let $A \in GL(d,\R \text{ or }\C)$ admit singular value decomposition $A = V \Sigma W$. Then \begin{enumerate}
        \item[1.] For every $k \in \{1,...,d\}$, $$\int_{U(d)} \delta_{\lambda_k(UA) } \ d\nu(U) = \int_{U(d)} \delta_{\lambda_k(U\Sigma)} \ d\nu(U);$$ 
        \item[2.] For every $k \in \{1,...,d\}$ $$\int_{\Grass(d,k)} \delta_{\det A|g_k} \ d\mu(g_k) = \int_{\Grass(d,k} \delta_{\det \Sigma|g_k} \ d\mu(g_k). $$ 
    \end{enumerate}
\end{lemma}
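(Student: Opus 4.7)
The strategy is to absorb the fixed unitary factors from the decomposition $A = V\Sigma W$ into the random objects via a change of variables, exploiting the bi-invariance of Haar measure $\nu$ on $U(d)$ and the $U(d)$-invariance of the uniform measure $\mu$ on $\Grass(d,k)$. No estimates are needed; the whole content is bookkeeping about where the unitary factors can be moved.

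For part (1), since the spectrum of a product of square matrices is invariant under cyclic permutation of the factors,
\begin{equation*}
\lambda_k(UA) \;=\; \lambda_k(UV\Sigma W) \;=\; \lambda_k(W U V \cdot \Sigma).
\end{equation*}
The map $\phi : U \mapsto W U V$ is a bijection of $U(d)$ with $\phi_*\nu = \nu$, by bi-invariance of Haar measure together with the fact that $V$ and $W$ are fixed. Therefore the pushforward of $\nu$ under $U \mapsto \lambda_k(UA)$ coincides with the pushforward under $U \mapsto \lambda_k(U\Sigma)$, which is exactly the claimed identity of distributions on $\C$.

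For part (2), let $U_k$ be a $d \times k$ matrix whose columns form an orthonormal basis of $g_k$. Then $\det A|g_k$ is determined by the Gram matrix
\begin{equation*}
(AU_k)^*(AU_k) \;=\; U_k^* W^* \Sigma^* V^* V \Sigma W U_k \;=\; (WU_k)^*(\Sigma^*\Sigma)(WU_k),
\end{equation*}
since $V^*V = I$. Hence $\det A|g_k = \det \Sigma | W(g_k)$. Because $\mu$ is the image of Haar measure on $U(d)$ under the orbit map at a reference $k$-plane, it is invariant under the unitary action $g_k \mapsto W g_k$ on $\Grass(d,k)$, and the change of variables $g_k \mapsto W^{-1} g_k$ now delivers the claim. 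The one point to watch is that $V$, applied on the left of $A$, is annihilated by $V^*$ in the Gramian, whereas $W$ survives on the right and must be removed by invariance of $\mu$ on the Grassmannian; there is no further obstacle.
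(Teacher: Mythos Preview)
Your proof is correct and follows the same route as the paper: cyclic invariance of the spectrum together with bi-invariance of Haar measure for part~1, and $U(d)$-invariance of $\mu$ on the Grassmannian for part~2. Your treatment of part~2 is in fact slightly more explicit than the paper's, spelling out via the Gram matrix why the left factor $V$ cancels and only $W$ must be absorbed by the invariance of $\mu$.
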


\begin{proof}
    In case 1, let $A = V \Sigma W$ be the singular value decomposition and let $U \in U(d)$ be Haar random. We have that $UA = UV\Sigma W = WUV \Sigma$. Haar measure is translationally invariant, and so $WUV$ is also Haar distributed. In case 2, $\mu$ on $\text{Grass}(k,d)$ is by definition, stationary under $U(d)$ action and so $W g_k V$ is distributed in $g_k$. In the real case, under $O(d)$ action, the situation is the same. 
\end{proof}

Hence, throughout our paper when considering Dedieu--Shub measures, it is enough for us to consider diagonal matrices with non negative matrices, essentially presupposing our matrix was processed first through a singular value decomposition machine. Secondly, we can always prove results for matrices with determinany $\pm 1$. This is because we can divide out by the determinant. 
 
\subsection{Facts about the projective action}

\subsubsection{} Let $B  \in \GL(2,R)$. If $|\trace B| < 2$, then $B$ is called \textit{elliptic}; if $|\trace B| = 2$, then $B$ is called \textit{parabolic} and; if  $|\trace B| > 2$, then $B$ is called \emph{hyperbolic}. It is clear that in $\SO(2)\mathrm{diag}(a,a^{-1}),$ parabolic matrices and elliptic matrices with rational rotation number is a measure zero phenomenon. 

\subsubsection{}

By definition, $\RP^1$ is the space of lines passing through the origin in $\R^2$. We can define the \emph{angular parametrization} of $\RP^1$ as 
\begin{equation}
\omega \in (-\pi/2,\pi/2] \leftrightarrow \R \begin{pmatrix} \cos \omega \\ \sin \omega \end{pmatrix}
\end{equation}
identifying the line by the angle it forms with the $x$ axis.  In addition, we can define the \emph{anti--slope parametrization} of $\RP^1$ as 
\begin{align*}
    s \in \overline{\R} &\leftrightarrow \R \begin{bmatrix}
        s \\ 1
    \end{bmatrix} \\
    s = \frac{x}{y} &\leftrightarrow \R\begin{bmatrix}
        x \\ y
    \end{bmatrix}.
\end{align*}
A matrix $B = \begin{bmatrix}
    a & b \\
    c & d
\end{bmatrix}\in \GL(2,\R)$ acts on the anti--slope parametrization of $\RP^1$ by \emph{Möbius transformations} $$A.z = \frac{az + b}{cz + d}.$$ The unique $SO(2)$--invariant measure on the real line (that is in anti--slope parametrization) acting via Möbius transformations is the \emph{Cauchy distribution} given by \begin{equation} \label{cauchy_dist}
	\frac{1}{\pi} \, \frac{1}{1+s^2}  \, ds \, .
\end{equation}  

\subsubsection{} A key fact is that a matrix in $\GL(2,\R)$ acting by Möbius transforms on $\overline{\C}$, is that it preserves the upper half plane $\H$. A related fact is that a Möbius transform preserving the closed unit disk $\overline{\mathbb{D}}$ is of the form \begin{equation}
    b(z) = e^{i\theta} \frac{z - c}{1 - \overline{c}z}
\end{equation} 
where $c \in \C$ satisfies $|c| < 1.$ Such a $b(z)$ is called a \emph{Blaschke factor.} It is a standard fact that any Möbius transform preserving $\D$ can be written as a Blaschke factor. \

\subsubsection{} The \emph{Cayley transform} $\varphi \in \SL(2,\R)$ defined as $$C(z) = \frac{z - i}{z + i}$$ maps $\H$ to $\mathbb{D}$ and $\overline{\R}$ to $\T := \partial \D$. This necessarily relates the dynamics of an $A \in GL(2,\R)$ acting on $\RP^1$ to the dynamics of a Blaschke factor $b(z)$ acting on $\T$ by the following commuting diagram:

\[ \begin{tikzcd}
\H \arrow{r}{A} \arrow[swap]{d}{C} & \H \arrow{d}{C} \\%
\T \arrow{r}{b}& \T.
\end{tikzcd} 
\]

\begin{lemma} \label{equivalence}
    Fix $A = \mathrm{diag}(a,a^{-1})$.
   Then the dynamics of $R_\theta A$ is conjugate to $$b(z) = e^{i \theta} \frac{z + \tanh(\ln a)}{1 + \tanh(\ln a) z}$$ has dynamics conjugate to $R_\theta A$ acting on $\RP^1$.
\end{lemma}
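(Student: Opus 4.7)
The plan is a direct $2 \times 2$ matrix computation together with the identification of the dynamics on $\RP^1 \cong \overline{\R}$ with the dynamics on $\T = \partial \D$. Under the anti-slope parametrization a matrix in $\GL(2,\R)$ acts on $\overline{\R}$ by Möbius transformation, and composition of Möbius transformations corresponds to matrix multiplication up to a nonzero scalar. The Cayley transform is itself a Möbius transformation, represented by $C = \begin{bmatrix} 1 & -i \\ 1 & i \end{bmatrix}$. Thus to identify the conjugate $C \circ (R_\theta A) \circ C^{-1}$ it suffices to form the product of three $2 \times 2$ matrices and read off the resulting fractional linear transformation.

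Explicitly, I would write $R_\theta A = \begin{bmatrix} a\cos\theta & -a^{-1}\sin\theta \\ a\sin\theta & a^{-1}\cos\theta \end{bmatrix}$, compute $C^{-1}$ by the standard adjugate formula, and multiply. The sine and cosine terms assemble into Euler combinations $\cos\theta \pm i\sin\theta = e^{\pm i\theta}$, while the $a$ and $a^{-1}$ entries group into $\tfrac{a+a^{-1}}{2}$ and $\tfrac{a-a^{-1}}{2}$. After normalizing by the lower-right entry, the resulting Möbius transformation has the form $e^{i\alpha}\,\dfrac{z + c}{1 + c z}$ with $c = \dfrac{a - a^{-1}}{a + a^{-1}} = \dfrac{a^2 - 1}{a^2 + 1}$. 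Applying the identity $\tanh(\ln a) = (a^2-1)/(a^2+1)$ rewrites $c$ in the claimed form, yielding a genuine Blaschke factor.

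As a conceptual cross-check (and an alternative route), observe that $A = \mathrm{diag}(a, a^{-1})$ acts on $\H$ as the dilation $z \mapsto a^2 z$ with fixed points $0$ and $\infty$, which Cayley sends to $-1$ and $+1$ on $\overline{\D}$. The Cayley-conjugate of $A$ must therefore be a $\D$-automorphism fixing $\pm 1$, which forces the Blaschke form $\frac{z+c}{1+cz}$ with real $c \in (-1,1)$. Matching derivatives at the attracting fixed point via $b'(-1) = (1+c)/(1-c) = a^2$ independently pins down $c = \tanh(\ln a)$, and post-composing with $R_\theta$ contributes a pure rotation on $\T$. The only subtle point is the angle-convention bookkeeping: since $\RP^1$ is $S^1$ modulo the antipodal map, a rotation $R_\theta$ on $\RP^1$ corresponds under Cayley to rotation by $\pm 2\theta$ on $\T$, so the leading rotation exponential in the Blaschke form must be read up to this reparametrization. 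I expect this convention check to be the only nontrivial step; the rest is routine $2 \times 2$ algebra.
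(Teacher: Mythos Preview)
Your approach is essentially the same as the paper's: conjugate by the Cayley transform via direct $2\times 2$ matrix multiplication, with the paper choosing to handle $R_\theta$ and $A$ separately and then compose, arriving at $CAC^{-1}=\begin{bmatrix}\cosh(\ln a)&\sinh(\ln a)\\\sinh(\ln a)&\cosh(\ln a)\end{bmatrix}$ and the stated Blaschke factor. Your flag about the angle doubling is well-taken: the paper asserts $CR_\theta C^{-1}.z=e^{i\theta}z$ without comment, whereas the honest computation gives $e^{-2i\theta}z$, so your observation that the rotational prefactor should be read up to this reparametrization is sharper than what the paper records.
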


\begin{proof}

Conjugate $R_\theta$ and $A$ separately by the Cayley transform gives $C R_\theta C^{-1}.z = e^{i \theta} z$ and \begin{align*}
    b(z) &= C \circ A \circ C^{-1}(z) 
    = \frac{1}{2i} \begin{bmatrix}
        1 & -i\\ 1 & i
    \end{bmatrix} \begin{bmatrix}
        a & 0 \\ 0 & a^{-1}
    \end{bmatrix} \begin{bmatrix}
        i & i\\ -1 & 1
    \end{bmatrix}.z  = \begin{bmatrix}
        \frac{a + a^{-1}}{2} & \frac{a - a^{-1}}{2} \\
        \frac{a - a^{-1}}{2} & \frac{a + a^{-1}}{2}
    \end{bmatrix}.z \\ &= \begin{bmatrix}
        \cosh(\ln a) & \sinh (\ln a) \\
        \sinh (\ln a) & \cosh ( \ln a)
    \end{bmatrix}.z
   =  \frac{z+\tanh (\ln a)}{1 + \tanh (\ln a) z}.
\end{align*} Composing the two gives the result. 

\end{proof}

\subsection{Positive determinant}
The argument presented here follows \cite{pujals2006expanding}. 

\subsubsection{}
A \emph{Blaschke product}, $B:\C \to \C$, given by $$B(z) = e^{it}  \prod \limits_{i=1}^n \frac{z - a_i}{1- z \overline{a_i}}$$ where $n \geq 1$ $t \in [0,2\pi)$, $a_i \in \mathbb{C}, |a_i| < 1$ for $i \in [1,...,n]$. If $n = 1$, we call $B$ a \emph{Blaschke factor}. A Blaschke product is analytic in a neighborhood of the disk $\mathbb{D}$, and preserves the boundary $\partial \mathbb{D} = \mathbb{T}$. Denote Lebesgue measure on $\T$ by $\mu$. 

\subsubsection{}
To prove \textit{(1)} and   \textit{(2)} of Theorem \ref{GL2R}, by Lemma \ref{equivalence}, it is sufficient to prove the result for degree 1 Blaschke products. Recall that a \emph{physical measure} is simply the pushforward of Lebesgue. 

\begin{theorem}[\cite{pujals2006expanding}] \label{Blaschke}
    Fix a Blaschke product $B(z)$ of degree $n \geq 1$. Consider the family of circle maps acting on the boundary $\T :=  \partial \mathbb{D} \subset \C$ defined as $$\cF_B = \{B_\theta := R_\theta B(z): \theta \in [0,2\pi] \},$$ where $R_\theta = e^{i\theta}$. The physical measure of any $B_\theta \in \cF_B$ is either a Dirac mass or a smooth absolutely continuous measure. In either case, the physical measures, which we denote as $\nu_\theta$, are Dedieu -- Shub measures. 
\end{theorem}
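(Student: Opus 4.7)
My plan is to first identify each physical measure $\nu_\theta$ via the Denjoy--Wolff theorem, then reduce the Dedieu--Shub property to a mean-value identity for the curve of Denjoy--Wolff points, and finally verify this identity by a residue calculation.

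For the first step, note that each $B_\theta = R_\theta \circ B$ is itself a Blaschke product of the same degree as $B$, hence an analytic self-map of $\overline{\D}$. By the Denjoy--Wolff theorem, $B_\theta$ has a unique attracting point $z_\theta \in \overline{\D}$ to which orbits in $\D$ converge uniformly on compacta. Classical Blaschke product theory (see \cite{pujals2006expanding} and references therein, going back to Heins and Doering--Mañé) identifies the physical measure $\nu_\theta$ on $\T$ as the harmonic measure from $z_\theta$: when $z_\theta \in \D$, $\nu_\theta$ has smooth density equal to the Poisson kernel $P_{z_\theta}(e^{it}) = (1-|z_\theta|^2)/|e^{it}-z_\theta|^2$; when $z_\theta \in \T$, $\nu_\theta = \delta_{z_\theta}$. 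This establishes the stated dichotomy.

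For the second step, since $R_\phi B_\theta = R_{\phi+\theta} B$ and Haar measure on $SO(2)$ is translation invariant, the Dedieu--Shub property reduces to showing $\int_0^{2\pi} \nu_\phi \, d\phi/(2\pi) = \mu$ on $\T$. Testing against a continuous $f: \T \to \R$ with harmonic extension $\widetilde{f}$, and using $\int f \, d\nu_\phi = \widetilde{f}(z_\phi)$ in both cases, this becomes the mean-value identity $\int_0^{2\pi} \widetilde{f}(z_\phi) \, d\phi/(2\pi) = \widetilde{f}(0)$, or equivalently (after approximating $\widetilde f$ by harmonic polynomials) the moment conditions $\int_0^{2\pi} z_\phi^k \, d\phi/(2\pi) = 0$ for each $k \geq 1$. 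The fixed-point equation $B_\phi(z_\phi) = z_\phi$ rewrites as $e^{i\phi} = \Psi(z_\phi)$ with $\Psi(z) := z/B(z)$, so the change of variable $d\phi = \Psi'(z)/(i\Psi(z))\, dz$ along the Denjoy--Wolff curve $\gamma = \{z_\phi : \phi \in [0,2\pi]\}$ expresses each moment as a contour integral
\begin{equation*}
    \int_0^{2\pi} z_\phi^k \, \frac{d\phi}{2\pi} = \frac{1}{2\pi i} \oint_\gamma z^k \, \frac{\Psi'(z)}{\Psi(z)}\, dz.
\end{equation*}
The logarithmic-derivative formula $\Psi'/\Psi = 1/z - \sum_i 1/(z - a_i) - \sum_i \bar{a}_i/(1 - \bar{a}_i z)$, with $a_i$ the zeros of $B$ in $\D$, together with the residue theorem, reduces the integral to $-\sum_i w_\gamma(a_i)\, a_i^k$: the $z = 0$ residue vanishes thanks to the factor $z^k$ with $k \geq 1$, and the zeros $1/\bar{a}_i$ of $\Psi$ lie outside $\overline{\D}$.

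The hard step will be the topological claim $w_\gamma(a_i) = 0$ for each zero $a_i$ of $B$. In the case $n = 1$, $\gamma$ consists of an arc on $\T$ (the hyperbolic regime, yielding Dirac $\nu_\phi$'s) joined to an arc inside $\D$ (the elliptic regime, yielding ACIP $\nu_\phi$'s) at the two bifurcation values of $\phi$ where the fixed points of $B_\phi$ collide on $\T$; a direct computation then verifies that $\gamma$ encircles $0$ once but not $a$. For $n \geq 2$, I would argue by continuous deformation to the model $B_0(z) = z^n$---for which $\gamma$ collapses to the single point $\{0\}$ and all windings are trivially zero---together with the observation that $z_\phi \neq a_i$ for any $\phi$ (since $B_\phi(a_i) = e^{i\phi} B(a_i) = 0 \neq a_i$ rules out $a_i$ being a fixed point of $B_\phi$). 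Once these winding numbers are pinned down, the moment identity and hence the Dedieu--Shub property follow.
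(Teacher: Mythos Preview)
Your strategy is genuinely different from the paper's, and it is instructive to contrast the two. The paper (following \cite{pujals2006expanding}) does \emph{not} work directly with the Denjoy--Wolff curve. Instead it proves, for every fixed iterate $n$, that $\int_0^{2\pi} (B_\theta^n)_*\mu\, d\theta = \mu$: testing against a continuous $h$ with harmonic extension $\tilde h$ and using that $(B_\theta^n)_*\mu$ is harmonic measure from $B_\theta^n(0)$, this reduces to $\int_0^{2\pi}\tilde h(B_\theta^n(0))\,d\theta/2\pi = \tilde h(0)$. The key observation is that the map $w\mapsto F_n(w):=B_\theta^n(0)$ (with $w=e^{i\theta}$) is holomorphic on $\overline{\D}$, takes values in $\D$, and satisfies $F_n(0)=0$; the identity then follows from Cauchy's theorem applied to $g\circ F_n$ for $g$ holomorphic. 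One then lets $n\to\infty$ and uses dominated convergence. This route never sees the winding numbers $w_\gamma(a_i)$ at all; your identity $\int z_\phi^k\,d\phi/2\pi=0$ is recovered as a limit of the trivially true identities $\int F_n(e^{i\theta})^k\,d\theta/2\pi = 0$.

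Your route is more direct---no limit in $n$---but the price is the topological lemma $w_\gamma(a_i)=0$, and here your justification has a gap. In your homotopy to $B_0(z)=z^n$, the zeros $a_i(t)$ and the curve $\gamma(t)$ \emph{both} collapse to the origin as $t\to 1$, so the winding number $w_{\gamma(t)}(a_i(t))$ is not defined at the endpoint and you cannot read off its value there. The statement ``for $z^n$ \ldots all windings are trivially zero'' is vacuous, because at $t=1$ the points you are winding around coincide with the (degenerate) curve. A clean fix: homotope only to $a_i(t)=t\,a_i$ with $t>0$ small. Then $B_t(0)=e^{it_0}(-t)^n\prod a_i$ has modulus $t^n\prod|a_i|$, and a first-order analysis of the fixed-point equation shows the Denjoy--Wolff curve $\gamma(t)$ is, to leading order, the circle of radius $t^n\prod|a_i|$ about $0$, whereas the zeros sit at distance $t|a_i|\gg t^n$ for $n\ge 2$. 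Hence $w_{\gamma(t)}(a_i(t))=0$ for small $t$, and by your own observation that $a_i(t)\notin\gamma(t)$ for all $t\in(0,1]$, this persists to $t=1$. For the case $n=1$ you should also be explicit that the change of variables $d\phi = \Psi'(z)/(i\Psi(z))\,dz$ survives the two parabolic values of $\phi$: there the attracting and repelling fixed points coalesce on $\T$, $z_\phi$ has a square-root singularity in $\phi$, and the resulting improper integral must be checked to converge.
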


\subsubsection{What is the physical measure of a $B_\theta$?}

\begin{lemma}
      Fix a Blaschke product $B(z)$ of degree $n \geq 1$. Then for a full measure set of $\theta$, there is a fixed point $\alpha(\theta) \in \overline{\D}$ of $B_\theta = e^{i\theta}B(z)$ such that $\alpha(\theta)$ is a sink. 
\end{lemma}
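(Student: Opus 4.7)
The plan is to apply the Denjoy--Wolff theorem to $B_\theta$ as a holomorphic self-map of $\overline{\D}$, combine it with the strict Schwarz--Pick inequality for non-automorphisms, and then control the parabolic exceptional parameter set via real analyticity.

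Because $B_\theta$ extends holomorphically to a neighborhood of $\overline{\D}$, maps $\overline{\D}$ into itself, and (for $n \geq 2$) is never a Möbius automorphism of $\D$, the Denjoy--Wolff theorem provides a unique point $\alpha(\theta) \in \overline{\D}$ such that the forward iterates $B_\theta^{k}$ converge locally uniformly on $\D$ to $\alpha(\theta)$. If $\alpha(\theta) \in \D$, then the strict form of Schwarz--Pick (strict precisely because $B_\theta$ is not an automorphism) gives $|B_\theta'(\alpha(\theta))| < 1$, so $\alpha(\theta)$ is a classical interior sink. If instead $\alpha(\theta) \in \T$, then the extended Denjoy--Wolff statement supplies an angular derivative $B_\theta'(\alpha(\theta)) \in (0,1]$, and $\alpha(\theta)$ is a boundary sink exactly when this derivative is strictly less than $1$.

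The angular derivative equals $1$ precisely in the parabolic case, where $B_\theta$ has a fixed point $z \in \T$ with $B_\theta'(z) = 1$. Eliminating $e^{i\theta}$ from the two equations $e^{i\theta}B(z) = z$ and $e^{i\theta}B'(z) = 1$ produces the single real-analytic condition $z B'(z) = B(z)$ on the compact circle $\T$. Unless $z B'(z) \equiv B(z)$ identically on $\T$ (which forces $B(z) = c z$ and is excluded for $n \geq 2$), the zero set of this condition on $\T$ is isolated, hence finite, and each such $z$ contributes at most one $\theta$. Therefore the parabolic parameter set is finite, so for a.e.\ $\theta$ the map $B_\theta$ has a sink in $\overline{\D}$.

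For $n = 1$, $B_\theta$ is itself a Möbius automorphism of $\D$ and the parameter space splits into hyperbolic, parabolic, and elliptic strata. The parabolic stratum is finite, the hyperbolic stratum produces a boundary sink (and an opposing source), and the elliptic stratum (open, positive measure) has an interior fixed point with derivative of modulus $1$ rather than a strict sink; this last regime corresponds to the absolutely continuous case of Theorem~\ref{GL2R}(2). The main obstacle in the plan is precisely this degree-one elliptic case, where ``sink'' must be interpreted in the extended Denjoy--Wolff sense (a distinguished invariant fixed point) rather than as a classical attractor; for $n \geq 2$ no such ambiguity arises and the argument above is clean.
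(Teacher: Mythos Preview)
The paper leaves the proof of this lemma blank, deferring implicitly to \cite{pujals2006expanding} (whose Propositions~2.1--2.3 are invoked immediately afterward), so there is no in-paper argument to compare against. Your Denjoy--Wolff/Schwarz--Pick route is the standard one and is correct for $n\ge 2$: the Denjoy--Wolff point is either interior (then strict Schwarz--Pick forces $|B_\theta'(\alpha(\theta))|<1$) or on $\T$ with real derivative in $(0,1]$, and your elimination of the parabolic parameters via the equation $zB'(z)=B(z)$ is clean---this is a nonzero rational function for $n\ge 2$, hence has finitely many zeros on $\T$, each yielding a single $\theta$.

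You are also right to flag the $n=1$ elliptic regime as the genuine obstruction: there the interior fixed point satisfies $|B_\theta'(\alpha(\theta))|=1$ on an open set of $\theta$, so the lemma is literally false if ``sink'' means $|B_\theta'|<1$. The paper's downstream use of the lemma only requires that the physical measure be $\cP(\alpha(\theta),\cdot)\,d\mu$ for the Denjoy--Wolff point $\alpha(\theta)$, and this holds in the elliptic case too (it is exactly the unique acip of Theorem~\ref{GL2R}(2)). Your extended reading of ``sink'' as ``Denjoy--Wolff point'' is therefore the correct one for the paper's purposes.
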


\begin{proof}
    
\end{proof}

The physical measure  of a given $B_\theta$ is $$\nu_\theta = \lim \limits_{n \to \infty} B_{\theta_*}^n(\mu) = \lim \limits_{n \to \infty} \sum \limits_{i=1}^n \delta_{B_\theta(x)}$$ for almost every $x \in \T$.


Recall that given a continuous function $h:\T \to \R$, with harmonic extension $\tilde{h}:\D \to \R$, the \emph{Poisson formula} gives that $$\tilde{h}(z) = \frac{1}{2\pi}\int_{0}^{2\pi} h(e^{it}) \cP(z,e^{it}) \ d\mu(t)$$ where $\cP(z,w) = \frac{1 - |z|^2}{|w-z|^2}$ is the \emph{Poisson kernel} \cite{lyons2017probability}. 

Then, because $\cP$ is equivariant, we have $$\int_\T h(z) d \left(\lim \limits_{n \to \infty} B^n_{\theta_*}(\mu)\right)(z) = \int_\T h(z) \ d\nu_\theta(z) = \int_\T h(z) \cP(\alpha(\theta), z) \ d\mu(z) = \tilde{h}(\alpha(\theta)).$$


For a Blaschke factor $b(z)$, and defining $b_\theta = e^{i\theta} b(z)$, there is always a fixed point $\alpha(\theta)$ inside $\overline{\D}$ and if it is hyperbolic, the fixed point is on $\T$. The physical measure of $b_\theta$, denoted by $\mu_\theta$, is determined by $\alpha(\theta)$ and the Poisson formula. Namely, for $A \in \cB(\T)$, $$\nu_\theta(A) = \int_\T \mathbbm{1}_A(z) \cP(\alpha(\theta), z) d\mu(z).$$ In particular, the Radon--Nikodym derivative of $\nu_\theta$, when $\alpha(\theta) \in \D$, with respect to $\mu$ -- which is the Lebesgue measure on $\T$ -- is $$\frac{d\nu_\theta}{d\mu}(z) = \cP(\alpha_\theta,z).$$ 

For general Blaschke products, a similar result is true. 

\begin{proposition}[\cite{pujals2006expanding}]
  Furthermore, the physical measure of $B_\theta$ has density given by $$\frac{d \nu_\theta}{d\mu}(z) = \cP(\alpha_\theta,z)$$ where $\mu$ is Lebesgue measure on $\T$. What this means is that for every continuous $h:\D \to \R$, $$\int h \ d\nu_\theta = \int h \cP(\alpha_\theta,z) d\mu $$ Finally, $\alpha(\theta)$ varies continuously on $\D$ for a full measure set of $\theta.$
\end{proposition}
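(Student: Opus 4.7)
The plan is to identify $\int_\T h \, d\nu_\theta$ for an arbitrary continuous test function $h: \T \to \R$ by exploiting the holomorphicity of $B_\theta$ on $\D$. I would let $\tilde h$ denote the Poisson extension of $h$, which is harmonic on $\D$ and continuous on $\overline{\D}$. Since $B_\theta$ is a holomorphic self-map of $\D$ preserving the boundary $\T$, the composition $\tilde h \circ B_\theta^n$ is harmonic on $\D$ with boundary values $h \circ B_\theta^n$; the mean value property evaluated at the origin then yields
\[
\int_\T h \, d(B_{\theta*}^n \mu) \;=\; \int_\T \tilde h \circ B_\theta^n \, d\mu \;=\; \tilde h\bigl(B_\theta^n(0)\bigr).
\]
This reduces the entire problem to understanding the orbit of the single point $0 \in \D$ under iteration of $B_\theta$.

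Next I would invoke the Denjoy--Wolff theorem for holomorphic self-maps of $\D$: iterates $B_\theta^n$ converge locally uniformly on $\D$ to a unique constant in $\overline{\D}$, which for a full-measure set of $\theta$ coincides with the attracting fixed point $\alpha(\theta)$ from the preceding lemma. In particular $B_\theta^n(0) \to \alpha(\theta)$, and by continuity of $\tilde h$ on $\overline{\D}$ I obtain $\int h \, d\nu_\theta = \tilde h(\alpha(\theta))$. The Poisson integral representation then rewrites the right-hand side as $\int_\T h(z)\, \cP(\alpha(\theta), z) \, d\mu(z)$, producing the claimed density $d\nu_\theta/d\mu = \cP(\alpha(\theta), \cdot)$ whenever $\alpha(\theta) \in \D$. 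The hyperbolic case $\alpha(\theta) \in \T$ collapses to $\nu_\theta = \delta_{\alpha(\theta)}$ in the same formalism, provided one checks that $B_\theta^n(0) \to \alpha(\theta)$ non-tangentially so that $\tilde h(B_\theta^n(0)) \to h(\alpha(\theta))$.

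For continuity of $\alpha(\theta)$: writing $B = P/Q$ with $P$ and $Q$ coprime polynomials, the fixed points of $B_\theta$ are roots of $e^{i\theta} P(z) - z Q(z) = 0$, whose coefficients depend real-analytically on $\theta$. Standard perturbation theory for polynomial roots gives continuous (even real-analytic) dependence of the roots on $\theta$ away from the discriminant locus where two roots collide. This locus is finite in $\theta$ and corresponds precisely to parabolic bifurcations; together with the measure-zero set of rationally elliptic parameters already noted in the paper, this accounts for the excluded set and leaves continuity of $\alpha$ on a full-measure subset of $\theta$.

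The main obstacle I anticipate is confirming that the Denjoy--Wolff limit really coincides with the attracting fixed point selected by the lemma, rather than some other boundary component of the dynamics. In the elliptic regime this is automatic because $B_\theta$ is not an automorphism of $\D$ for $\deg B \geq 2$ (and in the $\deg B = 1$ case one falls back on the explicit formula of Lemma \ref{equivalence}); in the hyperbolic regime the issue is really about the boundary behavior of the Poisson extension. Once these identifications are in place, the remainder of the argument is routine bookkeeping with the Poisson kernel.
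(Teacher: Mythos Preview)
Your proposal is correct and follows essentially the same approach as the paper. The paper's own proof here is merely a citation to \cite{pujals2006expanding}, but the surrounding discussion (the Poisson formula paragraph preceding the proposition and the computation in Proposition~4.3 that follows) uses exactly your mechanism: the identity $\int_\T h\, d(B^n_{\theta*}\mu)=\tilde h(B_\theta^n(0))$ via the mean value property, followed by convergence of the orbit of $0$ to the Denjoy--Wolff point $\alpha(\theta)$, with the degree-one elliptic case handled separately.
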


\begin{proof}
    Proposition 2.1, 2.2, 2.3 of \cite{pujals2006expanding}. 
\end{proof}

\subsubsection{Finishing the argument}

We wish to argue the following. 

\begin{proposition}[Proposition 4.3 of \cite{pujals2006expanding}]
    For all $n$,  $$\int_0^{2\pi} B^n_{\theta_*}(\mu) d\theta = \mu.$$
\end{proposition}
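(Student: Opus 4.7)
The plan is to swap the roles of the rotation parameter $\theta$ and the base point $z$: fix $z \in \T$ and view $\phi \mapsto (\phi B)^n(z)$ as a function of $\phi \in \overline{\D}$, then argue this is a finite Blaschke product fixing the origin. To make this precise, I will define $F_n(\phi, z) := (\phi B)^n(z)$ on $\overline{\D} \times \overline{\D}$, where $\phi B$ denotes the self-map $w \mapsto \phi B(w)$ of $\overline{\D}$. By construction $(B_\theta)^n(z) = F_n(e^{i\theta}, z)$, so averaging over $\theta$ becomes averaging the partial function $F_n(\cdot, z)$ against Lebesgue measure on $\T$.

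First, I would establish by induction on $n$ three properties of $\phi \mapsto F_n(\phi, z)$ for each fixed $z \in \T$: (i) it is analytic on $\overline{\D}$, (ii) it sends $\T$ into $\T$, and (iii) $F_n(0, z) = 0$. Properties (i) and (ii) follow because $\phi B$ is analytic on $\overline{\D}$ and maps $\T$ to $\T$ when $|\phi| = 1$, and these properties are inherited under composition. Property (iii) is immediate from the recursion $F_{n+1}(0, z) = 0 \cdot B(F_n(0, z)) = 0$. Combining (i)--(ii) with the standard characterization of analytic self-maps of $\overline{\D}$ that preserve $\T$, the function $F_n(\cdot, z)$ is a finite Blaschke product, and by (iii) it fixes $0$.

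Next, I would use the mean value property of harmonic functions to conclude that $F_n(\cdot, z)_* \mu = \mu$ on $\T$ for every $z \in \T$. Specifically, given continuous $f \colon \T \to \R$ with harmonic extension $\tilde{f} \colon \overline{\D} \to \R$, the composition $\tilde{f} \circ F_n(\cdot, z)$ is harmonic on $\overline{\D}$, so
$$\int_{\T} f \circ F_n(\cdot, z) \, d\mu = \tilde{f}(F_n(0, z)) = \tilde{f}(0) = \int_{\T} f \, d\mu.$$

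Finally, Fubini gives the desired identity: for every continuous $f \colon \T \to \R$,
$$\frac{1}{2\pi}\int_0^{2\pi} (B_\theta^n)_* \mu(f) \, d\theta = \int_{\T} \int_{\T} f(F_n(\phi, z)) \, d\mu(\phi) \, d\mu(z) = \int_{\T} \mu(f) \, d\mu(z) = \mu(f).$$
The only step that is not purely formal is the structural one: verifying that the composition $\phi \mapsto \phi B(F_{n-1}(\phi, z))$ is still analytic on $\overline{\D}$ with image in $\overline{\D}$ and boundary in $\T$, which requires a careful look at where zeros and poles of the Möbius factors of $B$ could appear. Once this is in hand, the harmonic-function trick and the Fubini exchange are immediate.
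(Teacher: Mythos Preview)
Your proposal is correct and essentially the same as the paper's proof: both rest on the observation that $\phi \mapsto (\phi B)^n(\cdot)$ extends analytically to $\overline{\D}$ and vanishes at $\phi=0$, so the mean value property of the harmonic extension $\tilde{h}$ kills the $\theta$-average. The only cosmetic difference is the order of integration: the paper first applies the Poisson formula in $z$ to obtain $\int_{\T} h\circ B_\theta^n\, d\mu = \tilde{h}(B_\theta^n(0))$ and then averages over $\theta$, whereas you fix $z\in\T$ and average over $\phi$ first, which makes the remaining $z$-integral trivial. (Note also that the full Blaschke-product characterization of $F_n(\cdot,z)$ is more than you need; properties (i)--(iii) alone already guarantee that $\tilde{f}\circ F_n(\cdot,z)$ is harmonic on $\D$ with value $\tilde{f}(0)$ at the origin.)
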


\begin{proof}
   It is sufficient to show for all continuous $h:\T \to \R$ that $$\int_{z \in \T} h(z) d\left( \int_{\theta \in [0,2\pi]} B^n_{\theta_*}(\mu) d\theta \right) (z)= \int_{z \in \T} h(z) \ d\mu(z).$$

   Now \begin{align*}
      \int_{z \in \T} h(z) d\left( \int_{\theta \in [0,2\pi]} B^n_{\theta_*}(\mu) d\theta \right) (z) 
       &=\int_{\theta \in [0,2\pi]}  \int_{z \in \T} h(z) d B^n_{\theta_*}(\mu)  (z) d\theta
       \\
       &= \int_{\theta \in [0,2\pi]} \tilde{h}(B^n_{\theta}(0))  \ d\theta \\
       &= \tilde{h}(B^n(0)) \\ 
       &= \tilde{h}(0) \ \ \text{(because } \tilde{h} \text{ is analytic in } \theta )\\
       &= \int_{z \in \T} h(z) \ d\mu(z).
   \end{align*}

\end{proof}

\begin{proof}[Proof of Theorem \ref{GL2R} parts \textit{(1), (2)}]
   By dominated convergence theorem, and because the previous proposition holds, we have that $$\lim \limits_{n\to \infty} \int_0^{2\pi} B^n_{\theta_*}(\mu) d\theta = \mu.$$
\end{proof}

\subsection{Negative determinant}

It remains to study the case when our diagonal matrix has form  $A = \begin{bmatrix}
    a & 0 \\
    0 & -a ^ {-1}
\end{bmatrix}$.

Parametrize $SO(2)$ by $\theta \in [0,2\pi)$ equipped with Lebesgue-Haar.  Consider $$ \cC = SO(2)^-A 
= \begin{bmatrix}
    a\cos \theta & a^{-1} \sin \theta \\
    a\sin \theta & -a^{-1}\cos \theta
\end{bmatrix}, \ \theta \in [0,2\pi]$$
Matrices in the coset $\cC \subset\SL(2,\mathbb{R})$, have negative determinant equal to $-1$, and hence have two real eigenvalues. 

For a fixed $R_\theta A$ with corresponding top eigenvector $v_\theta =(\cos \varphi, \sin \varphi) $. Observe that $$\|R_\theta A v_\theta\|^2 = \|Av_\theta\|^2= |\lambda|^2 = a^2 \cos^2 \varphi + a^{-2} \sin^2 \varphi. $$  We are interested in computing the distribution of $\varphi$. 

\begin{lemma}
    The cumulative density function of the trace is given by $$F_T(t) =  \frac{1}{\pi } \arccos\left( \frac{t}{a^{-1}-a}\right)$$ with probability density $$f_T(t) =  \frac{1}{(a-a^{-1})\pi\sqrt{1 - \left(\frac{t}{a^{-1}-a}\right)^2}}$$
\end{lemma}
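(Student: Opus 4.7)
The plan is a direct pushforward computation from the Haar measure on $SO(2)$. Reading off from the explicit parametrization stated just before the lemma, the generic element $R_\theta A \in \cC$ has trace
\[
T(\theta) \;=\; a\cos\theta \,+\, (-a^{-1}\cos\theta) \;=\; (a-a^{-1})\cos\theta,
\]
so computing the distribution of the trace reduces to computing the law of $T = (a-a^{-1})\cos\theta$ when $\theta$ is uniform on $[0, 2\pi)$.

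Assume $a > 1$ without loss of generality (the other case follows by swapping the two basis vectors of $\R^2$, which only changes the sign convention), and set $\alpha := a - a^{-1} > 0$ so that $T$ takes values in $[-\alpha, \alpha]$. For $t$ in this range the event $\{T \leq t\}$ coincides with $\{\cos\theta \leq t/\alpha\}$, which for $\theta$ uniform on $[0, 2\pi)$ is the arc $[\arccos(t/\alpha),\, 2\pi - \arccos(t/\alpha)]$ of length $2\pi - 2\arccos(t/\alpha)$. Normalizing by $2\pi$ and then applying the identity $\arccos(-x) = \pi - \arccos(x)$ yields
\[
F_T(t) \;=\; 1 - \tfrac{1}{\pi}\arccos\!\left(\tfrac{t}{a-a^{-1}}\right) \;=\; \tfrac{1}{\pi}\arccos\!\left(\tfrac{t}{a^{-1}-a}\right),
\]
which is the claimed CDF.

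The density is then obtained by straightforward differentiation. Using $\frac{d}{du}\arccos(u) = -1/\sqrt{1-u^2}$ together with the chain-rule factor $1/(a^{-1}-a)$, the two negative signs combine (since $a^{-1}-a < 0$ under our assumption $a > 1$) to produce precisely the positive prefactor $1/((a-a^{-1})\pi)$ in the stated $f_T$. There is no genuine obstacle in the argument; the only subtlety is sign-keeping, which is exactly what forces the author to record the CDF with $a^{-1}-a$ in the denominator of the argument rather than $a-a^{-1}$.
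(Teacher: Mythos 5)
Your argument is correct and follows the same direct pushforward computation as the paper: read off the trace $(a - a^{-1})\cos\theta$, compute the law of $\cos\theta$ under uniform $\theta \in [0, 2\pi)$, and differentiate to get the density. You are in fact slightly more careful than the paper's terse proof in making the arc-length bookkeeping and the $\arccos(-x) = \pi - \arccos(x)$ identity explicit (the paper compresses this into the single step $\P\bigl(\theta \le \arccos(t/(a^{-1}-a))\bigr) = \tfrac{1}{\pi}\arccos(t/(a^{-1}-a))$, which implicitly uses the same symmetry and a normalization by $\pi$ rather than $2\pi$), and in supplying the differentiation for $f_T$, which the paper states without derivation.
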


\begin{proof}
    The trace of a matrix in $\cC$ is given by $(a - a^{-1}) \cos \theta$ for $\theta$ uniformly distributed in $[0,2\pi]$. Denoting the trace as $T$, we have \begin{align*}
        F_{T}(t) &= \P(T \leq t) \\
        &= \P((a-a^{-1})\cos(\theta) \leq t) \\
        &= \P\left(\theta \leq \arccos\left( \frac{t}{a^{-1}-a}\right)\right) \\
        &= \frac{1}{\pi } \arccos\left( \frac{t}{a^{-1}-a}\right).
    \end{align*}
\end{proof}

The eigenvalues of a matrix in $\cC$ in terms of the trace are $$\lambda_{\pm}(\theta) = \frac{t_\theta \pm \sqrt{t_\theta^2 + 4}}{2}.$$ Observe that $|\lambda_+| < |\lambda_-|$ precisely when $\theta \in [-\pi/2,\pi/2]$ and that the probability density of $\lambda_+$ is precisely the probability density of $-\lambda_-$. 

\begin{proposition}
    Denote $X$ as the random variable $\rho(R_\theta A)$ where $\theta$ is uniformly distributed in $[0,2\pi]$. Then the cumulative density function of $X$ is $$\frac{2}{\pi} \arccos \left( \frac{1-\rho^2}{(a^{-1} - a) \rho}\right)$$ and probability density function given by $$f_X(\rho) = \frac{2}{\pi} \frac{(\rho^2 + 1)}{\rho \sqrt{(a^{-2}-\rho^2)(\rho^2 - a^2)}}.$$ 
\end{proposition}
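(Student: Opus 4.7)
The plan is to push forward the distribution of the trace from the previous lemma through an explicit functional relation to $\rho$. Since every matrix in the coset $\cC$ has determinant $-1$, the characteristic polynomial of $R_\theta A$ is $\lambda^2 - T\lambda - 1$, so its eigenvalues $\lambda_\pm = (T\pm\sqrt{T^2+4})/2$ are real of opposite sign with $\lambda_+\lambda_- = -1$. Consequently the spectral radius satisfies
\[
\rho \;=\; \max(|\lambda_+|,|\lambda_-|) \;=\; \frac{|T|+\sqrt{T^2+4}}{2},
\]
which inverts to $|T|=\rho-\rho^{-1}=(\rho^2-1)/\rho$, a strictly increasing function of $\rho$ on $[1,\max(a,a^{-1})]$.

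Next, I would transfer the previous lemma's information about $T$ into information about $|T|$. Using $\arccos(-x)=\pi-\arccos(x)$, the lemma's CDF for $T$ gives
\[
\P(|T|\le s) \;=\; F_T(s)-F_T(-s) \;=\; \frac{2}{\pi}\arccos\!\left(\frac{s}{a^{-1}-a}\right)-1
\]
for admissible $s\ge 0$. Substituting $s=(\rho^2-1)/\rho$ and using the identity $(\rho^2-1)/((a^{-1}-a)\rho)=(1-\rho^2)/((a-a^{-1})\rho)$ together with $\arccos(-x)=\pi-\arccos(x)$ reorganizes the expression into the stated CDF.

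For the density, I would differentiate via $\frac{d}{d\rho}\arccos(c(\rho))=-c'(\rho)/\sqrt{1-c(\rho)^2}$, with
\[
c(\rho)=\frac{1-\rho^2}{(a^{-1}-a)\rho}, \qquad c'(\rho)=\frac{\rho^2+1}{(a-a^{-1})\rho^2}.
\]
The central algebraic step is the factorization
\[
(a-a^{-1})^2\rho^2 - (\rho^2-1)^2 \;=\; -(\rho^2-a^2)(\rho^2-a^{-2}) \;=\; (a^{-2}-\rho^2)(\rho^2-a^2),
\]
which (after dividing by $(a-a^{-1})^2\rho^2$ and taking the square root) gives $\sqrt{1-c(\rho)^2}=\sqrt{(a^{-2}-\rho^2)(\rho^2-a^2)}/((a-a^{-1})\rho)$. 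Multiplying this together with $c'(\rho)$ and the overall $-2/\pi$ from the derivative yields exactly the advertised density $f_X(\rho)=(2/\pi)(\rho^2+1)/(\rho\sqrt{(a^{-2}-\rho^2)(\rho^2-a^2)})$.

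The main obstacle is this quadratic-in-$\rho^2$ factorization: it is not deep but it has to be carried out, and the signs (depending on whether $a>1$ or $a<1$) must be reconciled with the placement of $\rho$ in the interval $[1,\max(a,a^{-1})]$ so that each radicand is manifestly non-negative. Once the identity is in hand the remaining work is a routine chain-rule computation together with careful bookkeeping of the $\arccos$--$\arcsin$ relation when verifying that the CDF attains $0$ at $\rho=1$ and $1$ at $\rho=\max(a,a^{-1})$.
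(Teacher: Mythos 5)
Your overall strategy --- invert $\rho = \frac{|T|+\sqrt{T^2+4}}{2}$ to $|T| = (\rho^2-1)/\rho$ and push the trace distribution forward through that map --- is essentially the paper's argument, except that you work with $|T|$ directly where the paper computes $F_{\lambda_+}$ and then appeals to the symmetry of $\lambda_\pm$. Working with $|T|$ is a mild streamlining, not a different method.

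The problem is the ``reorganization'' step, which does not actually go through, and chasing it conceals a sign inconsistency. You correctly obtain
\[
\P\bigl(|T|\le s\bigr)\big|_{s=(\rho^2-1)/\rho} \;=\; \frac{2}{\pi}\arccos\!\left(\frac{\rho^2-1}{(a^{-1}-a)\rho}\right) - 1,
\]
which evaluates to $0$ at $\rho=1$ and to $1$ at $\rho=\max(a,a^{-1})$ --- a genuine CDF. But applying $\arccos(-x)=\pi-\arccos(x)$ to this expression yields $1 - \frac{2}{\pi}\arccos\!\left(\frac{1-\rho^2}{(a^{-1}-a)\rho}\right)$, which is $1$ \emph{minus} the proposition's stated formula, not equal to it. (Indeed the stated formula equals $1$ at $\rho=1$ and $0$ at $\rho=\max(a,a^{-1})$, so as written it is the survival function $\P(X>\rho)$, not a CDF.) The same sign discrepancy resurfaces in the density: the derivative you carry is $-\tfrac{2}{\pi}\cdot c'/\sqrt{1-c^2}$ with $c'>0$ (for $a>1$), so differentiating the stated formula gives $-\tfrac{2}{\pi}\tfrac{\rho^2+1}{\rho\sqrt{(a^{-2}-\rho^2)(\rho^2-a^2)}}$, the negative of the advertised $f_X$, and you drop that sign without comment. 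Your computation is in fact the correct one --- differentiating \emph{your} derived CDF produces the stated (positive) density --- but the proof should say that the proposition's CDF has an orientation typo rather than assert that the two expressions coincide. The paper's own proof has the same looseness in the step passing from $F_{\lambda_+}$ to $F_X$ ``by symmetry,'' so you are matching it defect for defect; still, a careful writeup must flag and resolve the sign rather than absorb it silently.
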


\begin{proof}
    We have that
    \begin{align*}
        F_{\lambda_+}(\rho) &= \P(\lambda_+ \leq \rho) \\
        &= \P\left( \frac{t + \sqrt{t^2 + 4}}{2} \leq \rho \right) \\
        &= F_T \left( \frac{\rho^2 - 1}{\rho}\right) \\
        &= \frac{1}{\pi} \arccos\left(\frac{1-\rho^2}{(a^{-1} - a)\rho}\right)
    \end{align*}

    By symmetry of $\lambda_+, \lambda_-$, we then get that \begin{align*}
        F_X(\rho) &= \frac{2}{\pi} \arccos \left( \frac{1-\rho^2}{(a^{-1} - a)\rho}\right) \\ 
        f_X(\rho) &= \frac{2}{\pi} \frac{a(\rho^2 + 1)}{\rho\sqrt{(1-a^2\rho^2)(\rho^2-a^2)}} \\ 
        &= \frac{2}{\pi} \frac{\rho^2 + 1}{\rho\sqrt{(a^{-2}-\rho^2)(\rho^2-a^2)}}
    \end{align*} 
\end{proof}

\begin{proof}[Proof of Theorem \ref{GL2R} (3)]
    We have $$\rho^2 = a^2 \cos^2 \varphi + a^{-2} \sin^2 \varphi$$ where $\varphi$ is the coordinate in $\RP^1 = [-\pi/2,\pi/2)$ of the corresponding eigenvector. This reduces to $$\rho^2 = (a^2 - a^{-2})\cos^2\varphi + a^{-2} \implies \varphi = \arccos\left( \sqrt{\frac{\rho^2-a^{-2}}{a^2 - a^{-2}}}\right).$$
    A computation shows $$\frac{d\varphi}{d\rho} =-\frac{\rho}{\sqrt{( \rho^2-a^2)(\rho^2 - a^{-2})}} \implies d\varphi = -\frac{\rho}{\sqrt{( \rho^2-a^2)(\rho^2 - a^{-2})}} d\rho.$$
    Substituting into the density of $\rho$ and normalizing by the mass of $\RP^1$, we get $$\frac{\rho + \rho^{-1}}{\rho} d\varphi. $$
    Inverting gives us the Dedieu--Shub weight. 
\end{proof}

A similar calculation shows that the Dedieu--Shub measure for $\GL(2,\C)$ are those in \ref{dim2weights}. Namely, given a Haar random $U \in SU(2)$, the probability density function of $\rho(U\mathrm{diag}(a,a^{-1}))$ is $$f(\rho) = \frac{2}{a^2 - a^{-2}}(\rho + \rho^{-3}) \ d\rho$$. Now given the corresponding eigenvector $(X,Y) \in S^3 \subset \C^2$, it necessarily satisfies $\rho^2 = a^2|X|^2 + a^{-2}|Y|^2 = (a^2 - a^{-2})X^2 + a^{-2}$. Furthermore, the expansion rate of an eigenvector in $\CP^1 = S^3$ is uniquely determined by its \emph{height} $|X|$ and each fiber of the map $S^3 \to |X|$ (the Hopf fibration) has constant mass. So, because the chain rule give that $2\rho \ d\rho = (a^2 - a^{-2}) \ dX,$  substituting gives that \begin{equation}
    \frac{2}{a^2 - a^{-2}}(\rho + \rho^{-3}) \ d\rho = 1 + \rho^{-4} \ dX = \frac{\rho^2 + \rho^{-2}}{\rho^2} \ dX.
\end{equation} Dividing gives \ref{dim2weights}.



\section{Proof of Theorem \ref{DS measures}}

\begin{figure}
    \centering
    \includegraphics[width=\textwidth]{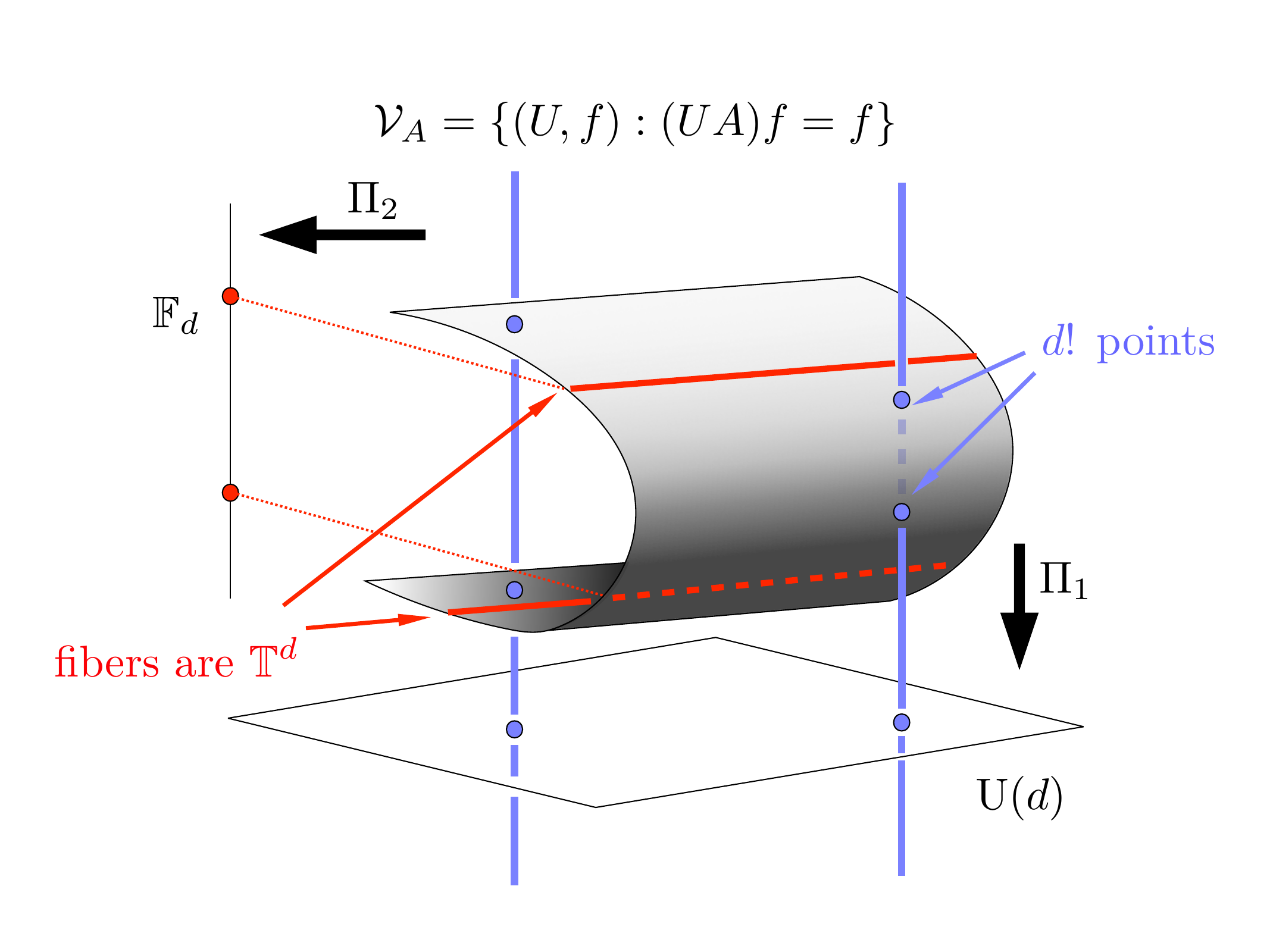}
    \caption{}
    \label{fig:fibration}
\end{figure}

The putative Dedieu--Shub measure $m:\GL(d,\C) \to \Prob(\FF_d)$ is given as follows. Let $p:\Sym(d) \times \mathrm{GL}(d,\C) \to [0,1]$ be defined as, given $\sigma  \in \Sym(d)$ $$p_\sigma(A) = \frac{\prod \limits_{j = 1}^d |\lambda_j|^{2(d - \sigma(j))}}{\sum \limits_{\pi \in \Sym(d)} \prod \limits_{j=1}^d |\lambda_j|^{2(d - \pi(j))}}.$$
Define $m: \GL(d,\C) \to \Prob(\FF_d)$ as $$m_A = \sum \limits_{\sigma \in \Sym(d)} p_\sigma(A) \delta_{\phi(v_{\sigma(1)}(A),...,v_{\sigma(d)}(A))}.$$

The measure $m_A$ is necessarily $A$ invariant and the function $m(\cdot)$ is also measurable. So it remains to prove the Dedieu--Shub property. 

\subsection{}

We only need to prove the theorem for a fixed diagonal $A = \mathrm{diag}(a_1,...,a_d)$, where $a_i > 0$ by Lemma \ref{SVD}. Define the variety $$\cV_A = \{(U,f): (UA)f = f\} \subset \U(d) \times \FF_d.$$ Denote the induced Riemannian volume as $d\cV_A$. If $M \subset N$ are both Riemannian manifolds, we denote by $dM|_N$, the induced Riemannian volume of $N$ to $M$. So, for example, if $S$ is a submanifold of $\cV_A$, let $d\cV_A|_{S}$ denote a restriction of the volume. Define the projections $\Pi_1: \V_A \to \U(d)$ and $\Pi_2: \V_A \to \FF_d$. 

\subsection{Fibers of $\Pi_1$ and $\Pi_2$}
\subsubsection{Fibers of $\Pi_1$} We have the following.
\begin{lemma}[\cite{dedieu2003mike} Proposition 5] \label{distinct_eigenvalues}
    Let $A \in \GL(d,\C)$. Then the set $$G := \{U \in \U(d): UA \text{ has eigenvalues of distinct modulus } \}$$ has Haar measure $1.$
\end{lemma}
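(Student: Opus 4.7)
My strategy is to express the complement
\[
G^c = \{U \in U(d) : |\lambda_i(UA)| = |\lambda_j(UA)| \text{ for some } i \ne j\}
\]
as the zero set of a nontrivial real-analytic function on the connected real-analytic manifold $U(d)$, which immediately gives $\nu(G^c) = 0$. A preliminary remark: some non-degeneracy on $A$ is needed, since $A = aI$ yields $UA = aU$ with all eigenvalue moduli equal to $|a|$, so $G = \emptyset$. I will therefore assume $A$ is not a scalar matrix, which is the regime in which the lemma will be applied.

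First I reduce to $A = \Sigma = \mathrm{diag}(a_1,\dots,a_d)$ with $a_i > 0$. By polar decomposition write $A = V\Sigma W$ with $V, W \in U(d)$. The characteristic polynomials of $UA = UV\Sigma W$ and of $(WUV)\Sigma$ coincide by conjugation invariance, and the change of variable $U \mapsto WUV$ preserves Haar measure (this is the same Haar-translation argument used in Lemma~\ref{SVD}). So eigenvalue moduli of $UA$ agree with those of $(WUV)\Sigma$ and the claim for $A$ reduces to the claim for $\Sigma$.

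Next I introduce the modular discriminant
\[
\Delta(U) = \prod_{1 \le i < j \le d}\bigl(|\lambda_i(U\Sigma)|^2 - |\lambda_j(U\Sigma)|^2\bigr)^{2},
\]
a non-negative symmetric expression in the unordered eigenvalues of $U\Sigma$ whose vanishing locus is exactly $G^c$. The coefficients of $p_U(z) = \det(zI - U\Sigma)$ are polynomials in the entries of $U$, hence real-analytic; on the complement of the discriminant locus $B = \{U : \mathrm{disc}(p_U) = 0\}$ the eigenvalues $\lambda_i(U)$ are locally real-analytic, so $\Delta$ is locally real-analytic, and as a symmetric function of the eigenvalue multiset descends to a well-defined continuous function on $U(d)$ that is real-analytic on the dense open set $U(d)\setminus B$. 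The set $B$ itself is the vanishing locus of the nontrivial complex-analytic function $\mathrm{disc}(p_U)$ and so has real codimension at least two. The main technical obstacle is making the real-analytic extension of $\Delta$ across $B$ rigorous; the cleanest route I would pursue is to realise $\Delta$ as a polynomial in matrix invariants via the characteristic polynomial of $U\Sigma \otimes \overline{U\Sigma}$, whose roots are all products $\lambda_i \bar\lambda_j$, and extract the diagonal factor $\prod_i(w - |\lambda_i|^2)$ by a symmetrisation argument, so that $\Delta$ is manifestly a polynomial in the entries of $U$ and $\bar U$.

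Finally I show $\Delta \not\equiv 0$ by exhibiting a single $U_0 \in U(d)$ with $U_0\Sigma$ having pairwise distinct eigenvalue moduli. If the $a_i$ are already pairwise distinct, take $U_0 = I$, so $\Delta(I) = \prod_{i<j}(a_i^2 - a_j^2)^2 > 0$. Otherwise pick indices $k,\ell$ with $a_k \ne a_\ell$ and let $U_0$ act as a rotation of small angle $\theta > 0$ in the $(k,\ell)$-plane and trivially elsewhere; a direct $2\times 2$ computation shows the $(k,\ell)$-block of $U_0\Sigma$ has trace $(a_k + a_\ell)\cos\theta$ and determinant $a_k a_\ell > 0$, so for $\theta$ small its two eigenvalues are real, distinct, strictly separated from all the other unaffected diagonal entries $a_i$, and share no modulus with them. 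If a residual degeneracy among the unrotated $a_i$ survives, one iterates the construction on the remaining block. Since $\Delta$ is a nonzero real-analytic function on the connected real-analytic manifold $U(d)$, its zero set $G^c$ has Haar measure zero, and hence $\nu(G) = 1$.
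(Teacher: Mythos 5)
Your overall strategy is the same one Dedieu--Shub use: exhibit a real-analytic function on a connected real-analytic manifold that is not identically zero and whose zero set contains $G^c$, then invoke the fact that such a zero set has measure zero. Where you diverge is in how you handle the fact that the eigenvalues $\lambda_i(U\Sigma)$ are not themselves analytic functions of $U$. The paper sidesteps this entirely by passing to the auxiliary variety $S = \{(U,V) \in U(d) \times U(d) : V \text{ diagonalizes } UA\}$: on $S$ the eigenvalues appear as the diagonal entries of the manifestly analytic map $(U,V) \mapsto VUAV^*$, so each $|\lambda_i| - |\lambda_j|$ is analytic on $S$, and the null set is pushed down to $U(d)$ along the finite projection. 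You instead try to stay on $U(d)$ and package the eigenvalue-moduli comparisons into the symmetric quantity $\Delta(U) = \prod_{i<j}(|\lambda_i|^2 - |\lambda_j|^2)^2$.

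Here lies the gap. You correctly flag that $\Delta$ is only obviously analytic off the discriminant locus $B$, and you propose to repair this by showing $\Delta$ is in fact a polynomial in the entries of $U$ and $\overline{U}$, via the characteristic polynomial of $U\Sigma \otimes \overline{U\Sigma}$ and a ``symmetrisation argument'' to extract the diagonal factor $\prod_i (w - \lambda_i\overline{\lambda_i})$. That extraction is precisely the hard part and is not a polynomial (or even algebraic) operation on the coefficient data: the characteristic polynomial of $U\Sigma \otimes \overline{U\Sigma}$ only records the multiset $\{\lambda_i\overline{\lambda_j}\}_{i,j}$, and from it one cannot recover which products come from matched indices $i=j$. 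Already for $d=2$, $|\lambda_1|^2 + |\lambda_2|^2 = \tfrac{1}{2}\bigl(|\trace M|^2 + |(\trace M)^2 - 4\det M|\bigr)$ involves a genuine modulus and is not polynomial in the entries, so $\Delta$ is not either. So as written the proof does not close.

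The patch is cheaper than what you attempt: you do not need $\Delta$ to be analytic on all of $U(d)$. The discriminant $\mathrm{disc}(p_U)$ is a complex-valued real-analytic function on $U(d)$, not identically zero (e.g.\ at $U = I$ when the $a_i$ are distinct), so $B$ is the common zero set of its real and imaginary parts and has real codimension at least two; in particular $U(d)\setminus B$ is connected and of full measure. On that connected open set $\Delta$ is real-analytic, and your construction of $U_0$ shows $\Delta \not\equiv 0$ there, so $\{\Delta = 0\}\cap(U(d)\setminus B)$ is null, and $G^c \subset B \cup \{\Delta = 0\}$ is null. Alternatively, simply adopt the paper's auxiliary-variety device, which avoids the issue altogether. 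Two things you do well that the paper's statement glosses over: you note explicitly that the lemma fails for scalar $A$ (so a non-degeneracy hypothesis, or the reduction to non-scalar $\Sigma$, is implicitly required), and your SVD reduction to $\Sigma$ is a clean preliminary step that matches the paper's own Lemma~\ref{SVD}.
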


\begin{proof}
    See \cite{dedieu2003mike} Proposition 5. To sketch it, we wish to show that $G$ has full Haar measure and we do this by showing the complement has zero Haar measure. Consider the smooth variety $$S = \{ (U,V) \in U(d) \times U(d) : V \text{ diagonalizes } UA\}.$$ In particular, if $(U,V) \in S$, then $VUAV^* = \mathrm{diag}(\lambda_1,...,\lambda_n)$. Define the evaluation maps 
        $$ev_1(U,V) = VUAV^*, \ \ \ ev_2^{i,j}(\mathrm{diag}(\lambda_1,...,\lambda_d) = |\lambda_i| - |\lambda_j|.$$
 The composition $ev_2^{i,j} \circ ev_1 : S \to \R$ is real analytic and non constant. Now a real analytic function can only have a positive measure set of roots if and only if it is identically zero. This is simply because almost every point in the domain is a density point, so the derivative would have to be zero everywhere if not. So the union of the zero set of $ev_2^{i,j} \circ ev_1$, over every pair $i,j$ has measure zero. This necessarily implies the set of matrices in the domain of $ev_2^{i,j} \circ ev_1$ with eigenvalues of repeat modulus has measure zero. 
\end{proof}

Abusing notation, whenever we integrate over $U(d)$, we will really mean integrating over $G$ as in Lemma \ref{distinct_eigenvalues}.

\subsubsection{Fibers of $\Pi_2$}
Any $d \times d$ matrix $A$ admits a $QR$--factorization $A = QR$, where $Q$ is unitary and $R$ is upper triangular. The interpretation is that the columns of $Q$ induces a flag fixed by $A$. There is a related factorization, the Schur factorization $A = QRQ^{-1}$, which is produced by $QR$--factorization. This is useful for our purposes due to the following lemma.

\begin{lemma}[Dedieu--Shub \cite{dedieu2003mike} Proposition 1]
\label{qr}

    The stabilizer of a canonical flag in $U(d)$ is the torus group.
    
\end{lemma}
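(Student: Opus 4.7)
The plan is to unpack both sides directly. Fix the \emph{canonical flag} $f_0 = [\langle e_1 \rangle \subset \langle e_1,e_2\rangle \subset \cdots \subset \langle e_1,\ldots,e_d\rangle] \in \FF_d$, where $\{e_i\}$ is the standard basis of $\C^d$. By definition, the stabilizer $\mathrm{Stab}(f_0) \subset \U(d)$ consists of those unitary $U$ with $U\langle e_1,\ldots,e_k\rangle = \langle e_1,\ldots,e_k\rangle$ for each $k = 1,\ldots,d$. The torus $\T^d \subset \U(d)$ of diagonal unitary matrices manifestly stabilizes every standard coordinate subspace and hence lies in $\mathrm{Stab}(f_0)$; the real content is the reverse inclusion.

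First I would argue that any $U \in \mathrm{Stab}(f_0)$ is upper triangular. The condition $Ue_k \in \langle e_1,\ldots,e_k\rangle$ follows from stabilizing the $k$-th subspace of the flag (together with the fact that $U$ is invertible and preserves the previous subspace, so $Ue_k$ cannot lie entirely in $\langle e_1,\ldots,e_{k-1}\rangle$). Writing $U$ in the standard basis, this immediately yields $U_{ij} = 0$ for $i > j$, i.e., $U$ is upper triangular.

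Next I would show that a unitary upper triangular matrix is diagonal. This is the only nontrivial step, and it goes by induction on the column index. Since $U$ is unitary, its columns form an orthonormal basis. The first column is $(U_{11}, 0, \ldots, 0)^T$, and unitarity forces $|U_{11}| = 1$. Inductively, suppose columns $1, \ldots, k-1$ are supported on their diagonal entries only. The $k$-th column has the form $(U_{1k}, \ldots, U_{kk}, 0, \ldots, 0)^T$; orthogonality with the $j$-th column (for $j < k$) reduces to $\overline{U_{jj}} \, U_{jk} = 0$, and since $|U_{jj}| = 1$ we conclude $U_{jk} = 0$. Thus the $k$-th column is $(0,\ldots,0,U_{kk},0,\ldots,0)^T$ with $|U_{kk}| = 1$.

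Combining the two steps, $\mathrm{Stab}(f_0) \subseteq \T^d$, and together with the reverse inclusion observed above we obtain $\mathrm{Stab}(f_0) = \T^d$. The only subtlety is bookkeeping the convention for ``upper triangular'' versus the choice of canonical flag, but no genuine obstacle appears; this is essentially a linear algebra identification of the isotropy group in the homogeneous space description $\FF_d = \U(d)/\T^d$.
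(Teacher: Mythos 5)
Your proof is correct and is essentially a fully fleshed-out version of what the paper asserts in one line (``every other matrix would rotate the frame''): you make precise that stabilizing the canonical flag forces upper triangularity, and that a unitary upper triangular matrix is diagonal. No meaningful difference in approach, just welcome added rigor.
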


\begin{proof}
    Let $\T_d = \{ \mathrm{diag}(e^{i\theta_1},...,e^{i\theta_d})\}$. Let $f_0$ be a canonical flag. If for some $V \in U(d)$, we have $Vf_0 = f_0$, then $V$ is in $\T^d$ -- every other matrix would rotate the frame. 
    
\end{proof}
\subsubsection{Putting it together}
\begin{proposition} \label{summary} Fix $A$. We have the following:
    \begin{enumerate}
        \item For Haar almost every $U \in U(d)$, $\# \Pi_1^{-1}(U) = d!$ composed of all $d!$ permutation of distinct eigenvectors of $UA$ and
        \item Recall $\T_d = \{ \mathrm{diag}(e^{i\theta_1},...,e^{i\theta_d})\}$. For every $f \in \FF_d$, $\Pi_2^{-1} f$ is $U_2 \cdot \T^d \cdot U_1$ for some $U_1, U_2 \in \U(d).$
    \end{enumerate}
\end{proposition}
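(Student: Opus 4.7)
The plan is to handle the two parts separately. For part (1), I would use Lemma \ref{distinct_eigenvalues} to reduce to matrices with simple spectrum and then apply a pure linear-algebra classification of invariant flags. For part (2), the idea is to combine transitivity of the $U(d)$--action on $\FF_d$ with a $QR$--factorization so that Lemma \ref{qr} pins the fiber down.

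For part (1), by Lemma \ref{distinct_eigenvalues} I may restrict to the full-measure set $G$ of $U \in U(d)$ for which $UA$ has eigenvalues of pairwise distinct modulus, hence pairwise distinct eigenvalues, hence is diagonalizable. On $G$, every $UA$--invariant subspace is a direct sum of some of the one-dimensional eigenspaces, so an invariant flag $f = (V_1 \subset \cdots \subset V_d)$ is determined by an ordering of the eigenvectors $v_1(UA), \ldots, v_d(UA)$: there is a unique $\sigma \in \Sym(d)$ with $V_k = \langle v_{\sigma(1)}(UA), \ldots, v_{\sigma(k)}(UA)\rangle$, and conversely every such choice yields an invariant flag. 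This produces exactly $\#\Pi_1^{-1}(U) = d!$ points.

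For part (2), I would fix $f \in \FF_d$, choose $W_1 \in U(d)$ with $W_1 f_0 = f$ (possible by transitivity of the $U(d)$--action), and $QR$--factor $A W_1 = W_2 R$ with $W_2 \in U(d)$ and $R$ upper triangular. Then
\begin{equation*}
(UA)f = f \iff UAW_1 f_0 = W_1 f_0 \iff U W_2 R f_0 = W_1 f_0.
\end{equation*}
Because the upper-triangular $R$ stabilizes $f_0$, this collapses to $W_1^* U W_2 \in \mathrm{Stab}_{U(d)}(f_0) = \T^d$ by Lemma \ref{qr}, so $\Pi_2^{-1}(f) = W_1 \cdot \T^d \cdot W_2^*$, which is the announced form $U_2 \T^d U_1$.

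The main subtlety I anticipate is not in part (2) — that is routine once one sees the $QR$ trick — but in handling the exceptional set in part (1): off of $G$ the count $d!$ genuinely fails, since repeated-modulus eigenvalues can produce eigenspaces of dimension $\geq 2$ and hence continuous families of invariant flags, so restricting to $G$ is substantive rather than cosmetic and must be recorded as part of the statement.
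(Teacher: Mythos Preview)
Your proof is correct and follows essentially the same approach as the paper: part (1) via Lemma \ref{distinct_eigenvalues} and the linear-algebra fact that invariant flags for a matrix with simple spectrum are exactly the $d!$ orderings of eigenvectors, and part (2) via transitivity plus Lemma \ref{qr}. Your write-up is in fact a bit more complete than the paper's---you explain why these are the \emph{only} invariant flags in part (1), and you make the $QR$--factorization explicit in part (2), whereas the paper simply picks $U_1,U_2$ with $U_1(Af)=f_0$ and $U_2(f_0)=f$ (your $W_2^*$ and $W_1$, respectively).
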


\begin{proof}
    Part 1 follows from Lemma  \ref{distinct_eigenvalues} -- for almost every $U$, $UA$ has distinct eigenvalues and hence $d$ distinct eigenvectors. All $d!$ flags formed by the distinct eigenvectors are fixed. Part 2 follows from Lemma \ref{qr} and the following argument.  Let $f_0$ denote a canonical flag. Given a flag $f$, note that $$\{U:UAf = f\} = \{U_2VU_1:V \in \T^d\} = U_2\T^d U_1, $$ where $U_1$ and $U_2$ are picked such that $U_1(Af) = f_0$ and $U_2(f_0) =f.$
\end{proof} 

\subsection{Strategy} \label{discussion}

 Recall our notation that $$\phi(w_1,...,w_d) = [\langle w_1 \rangle \subset \langle w_1, w_2 \rangle  \subset ... \subset \langle w_1, ..., w_d \rangle].$$ For a given $A \in \GL(d,\C)$, let 
\begin{align*}
\lambda_i(A) &= \text{ the ith largest in modulus eigenvalue of } A\\
v_i(A) &= \text{the eigenvector corresponding to } \lambda_i(A).
\end{align*}

 We want to show the result in the weak star topology. In particular, for fixed $A \in \GL(d,\C)$ and every continuous $h:\FF_d \to \R$, we want to show \begin{align*}
     \int_{\FF_d} h(f) \ d\mu(f) 
  &= \int_{U(d)} \int_{f \in \FF_d }h(f) \ dm_{UA}(f) \ d\nu(U) \\ 
  &= \int_{U(d)} \sum_{\sigma \in \Sym(d)}  h(\phi(v_{\sigma(1)}(UA),...,v_{\sigma(d)}(UA)) \frac{\prod \limits_{j = 1}^d |\lambda_j(UA)|
 ^{2(d - \sigma(j)))}}{\sum \limits_{\pi \in \Sym(d)} \prod \limits_{j=1}^d |\lambda_j(UA)|^{2(d - \pi(j))}} \ d\nu(U) \\ 
 \end{align*}

 The co--area formula allows us a method to do this. 
\subsection{Coarea formula}

For details, see \cite{evans2018measure}.
 
\begin{definition}[Linear normal jacobian]
    
Let $X$ and $Y$ be two finite dimensional inner product spaces. Let $A:X \to Y$ be linear and real. The \textit{normal jacobian} of $A$ is \begin{align*}
    NJ(A) & := \det (AA^*)^{1/2} \\ 
    & = |\det(A|_{\ker(A)^\perp})|\ \ \text{(explaining the use of the adjective ``normal.'')}
\end{align*}

Observe that $NJ(A) \geq 0$ with strict inequality if and only if $A$ has no zero singular values if and only if $A$ is surjective.
\end{definition}

\begin{definition}[Nonlinear normal jacobian] \label{nonlinearNJ}
    
Let $X$ and $Y$ be two real Riemannian manifolds. The \textit{normal jacobian} of a map $F:X \to Y \in C^1$ is $$NJ(F)(x) = \det(DF(x) DF(x)^*)^{1/2} \geq 0.$$  By the same observation as before, the inequality is strict if and only if $x$ is a regular point of $F$. 
\end{definition}

\begin{remark}
\label{remarkable}
If $A$ in the definition of a linear normal jacobian is a complex matrix, then as $A$ induces a map of $A_\R : \R^{2d} \to \R^{2d}$, we have that $$|\det A_\R| = |\det A|^2.$$ Hence if $X$ and $Y$ are \emph{complex} Riemannian manifolds, we have that $$NJ(F)(x) = |\det DF(x)|_{H_x}|^2 = \det(DF(x)DF(x)^*). $$
\end{remark}

\begin{definition} \label{almost_submersions}
    We say a map $F:X \to Y$ between two Riemannian manifolds is an \emph{almost submersive} if \begin{itemize}
        \item $F:X \to Y$ is smooth and surjective and 
        \item $DF(x):T_x X \to T_{F(x)}Y$ surjective for almost all $x \in X$. 
    \end{itemize}
\end{definition}

\begin{theorem}[Coarea formula]
    \label{coarea}
    Let $X$ and $Y$ be real Riemannian manifolds with volume forms $dX$ and $dY$. Suppose $F:X \to Y$ is an almost submersion. Denote the fibers of $F$ as $F^{-1}(y)$, for $y \in Y$. 
    
    Then, for any integrable $h:X \to \R$, 
    \begin{align*}
        \int_{x \in X} h(x) \ dX(x) &= \int_{y \in Y} \int_{x \in F^{-1}(y)} \frac{h(x)}{NJ(F(x))} \ dX|_{F^{-1}(y)}(x) \ dY(y), 
    \end{align*}
\end{theorem}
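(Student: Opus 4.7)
The plan is to reduce the general statement to a local Euclidean computation via a partition of unity and the submersion theorem, and then to the case of a linear surjection, where the identity boils down to Fubini together with singular value decomposition. First, let $C \subset X$ be the critical set $\{x : DF(x) \text{ is not surjective}\}$. By hypothesis $C$ has measure zero in $X$, and by Sard's theorem $F(C)$ has measure zero in $Y$, so we may delete $C$ from the outer integral on the left and delete $F(C)$ from the outer integral on the right without changing either side (note that $NJ(F)(x) > 0$ on $X \setminus C$, making the integrand on the right well-defined almost everywhere on each fiber).

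Next, away from $C$ every $x$ is a regular point, so the submersion theorem produces local coordinate charts $\psi : V \to X$ and $\chi : W \to Y$ with $V \subset \R^n$, $W \subset \R^m$ open, such that in these coordinates $F$ is the standard projection $(u_1, \dots, u_n) \mapsto (u_1, \dots, u_m)$. Pick a countable partition of unity $\{\rho_\alpha\}$ on $X \setminus C$ subordinate to such charts; it suffices to prove the identity with $h$ replaced by $\rho_\alpha h$, so we may assume $h$ is supported in a single chart of the above type. In these coordinates $dX = \sqrt{\det g_X} \, du_1 \cdots du_n$ and $dY = \sqrt{\det g_Y} \, du_1 \cdots du_m$ for the metric tensors $g_X$ and $g_Y$, and the normal jacobian transforms accordingly; the content of the theorem in these coordinates is to show
\[
\int_V h\, \sqrt{\det g_X}\, du \;=\; \int_W \int_{V_y} \frac{h}{NJ(F)}\, \sqrt{\det g_X|_{F^{-1}(y)}}\, du'\, \sqrt{\det g_Y}\, dy,
\]
where $V_y = V \cap F^{-1}(\chi(y))$ and $du'$ is Lebesgue measure in the fiber coordinates $(u_{m+1}, \dots, u_n)$.

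The identity is now a pointwise linear-algebra statement about the metrics plus Fubini's theorem in $u$. Concretely, at each fixed $u$ one must verify that the three jacobian factors combine correctly: this is exactly the statement that for a surjective linear map $A : (\R^n, g) \to (\R^m, h)$ between inner product spaces, one has $\sqrt{\det g} = NJ(A)\cdot \sqrt{\det g|_{\ker A}} \cdot \sqrt{\det h}^{\,-1}$ when determinants are taken relative to chosen bases adapted to $\ker A \oplus (\ker A)^\perp$. This linear identity itself follows from the singular value decomposition of $A$: in an orthonormal basis of $(\R^n, g)$ with first $n - m$ vectors spanning $\ker A$, the map $A$ restricted to $(\ker A)^\perp$ has matrix with singular values $\sigma_1, \dots, \sigma_m$, and $NJ(A) = \prod \sigma_i$ is precisely the determinant of the induced map from $(\ker A)^\perp$ to $(\R^m, h)$.

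The main obstacle I expect is bookkeeping: making the partition-of-unity reduction compatible with the fiber integral, and tracking how the restricted volume form $dX|_{F^{-1}(y)}$ (which is intrinsically defined by the induced Riemannian metric) matches the coordinate expression appearing after Fubini. The cleanest way is to choose the charts so that the fiber $F^{-1}(y)$ coincides with a level set of the projection coordinates, and then to verify that the induced volume from $g_X$ on that level set differs from Lebesgue measure in $(u_{m+1}, \dots, u_n)$ by exactly the Gram factor $\sqrt{\det g_X|_{F^{-1}(y)}}$, which is a straightforward computation once the charts are fixed. Sard's theorem and the almost-submersion hypothesis ensure the measure-zero set of bad points in $X$ and in $Y$ contribute nothing, and dominated convergence extends the identity from compactly supported smooth $h$ to general integrable $h$.
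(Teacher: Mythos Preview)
The paper does not supply its own proof of the coarea formula; it simply states the result and refers the reader to \cite{evans2018measure} for details. Your sketch is a correct outline of the standard argument (Sard plus submersion charts plus partition of unity reducing to a linear identity handled by SVD and Fubini), and this is essentially the approach one finds in the cited reference, so there is nothing to compare.
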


\begin{remark}
For complex manifolds, $$\int_{x \in X} h(x) \ dX(x) = \int_{y \in Y} \int_{x \in F^{-1}(y)} \frac{h(x)}{\det (DF(x) DF(x)^*)} \ dX|_{F^{-1}(y)}(x) \ dY(y).$$ 
\end{remark}

\begin{remark}
    If $N$ is a submanifold of $Y$, then $dY|_N$ is the induced volume measure on $N$. If $\dim N = 0$, the measure $dY|_N$ is the counting measure. Also, if $\dim X = \dim Y$ the the inner integral is a sum because fibers are discrete, and the measure on fibers is simply the counting measure. 
\end{remark}

\subsection{The double fibration trick}
Dedieu and Shub's double fibration trick, see also \cite{kodat2024average} and \cite{armentanorandom} for other usage, is this.

Let $Y_1$, $Y_2$, and $X$ be Riemannian manifolds and suppose. Let $dY_1,dY_2$ and $dX$ denote volumes. Suppose you are given almost submersions (Definition \ref{almost_submersions}) $F_1:X \to Y_1$ and $F_2: X \to Y_2$. 

\begin{enumerate}
    \item Start with $h: Y_2 \to \R$.
    \item Define $\tilde{h} : X \to \R$ by $$\tilde{h}(x) = \frac{NJ(F_2)(x)}{\mathrm{Vol} (F_2^{-1}(F_2(x))}h(F_2(x))$$
    \item The coarea formula with respect to $F_2$ gives that $$\int_X \tilde{h} \ dX = \int_{Y_2} h \ dY_2. $$
    \item The coarea formula with respect to $F_1$ gives that $$\int_X \tilde{h} \ dX = \int_{y_1 \in Y_1} \int_{F_1^{-1}(y_1)} \frac{\tilde{h}(x)}{NJ(F_1)(x)}  \ dX|_{F_1^{-1}(y_1)}(x)dY_1(y_1).$$
    \item Conclusion: We have $$\int_{Y_2} h \ dY_2 =  \int_{y_1 \in Y_1} \underbrace{\int_{F_1^{-1}(y_1)} \frac{NJ(F_2)(x)}{\mathrm{Vol}(F_2^{-1}(F_2(x)))NJ(F_1)(x)}  \ dX|_{F_1^{-1}(y_1)}(x)}_{\diamond}dY_1(y_1).$$
    The goal is now to evaluate $\diamond$.
\end{enumerate}



We now apply the double fibration trick to our setting, where $\cV_A$, $\Pi_1,\Pi_2$ takes the place of $X, F_1,F_2$.




\begin{proposition}[This is Theorems 17 and 23 in \cite{dedieu2003mike}] We have the following.

\begin{enumerate}
    \item Let $h:\FF_d \to \R$ be continuous. Then
    \begin{equation*}
        \int_{f \in \FF_d} h(f) \ d\mu(f) = \int_{U \in \U(d)} \sum_{(U,f) \in \Pi_1^{-1}(U)} h(f) \frac{NJ(\Pi_2)(U,f)}{\mathrm{Vol}\  \Pi^{-1}_2(f) NJ(\Pi_1)(U,f)} \ d\nu(U).
    \end{equation*}

    \item Let $g: U(d) \to \R$ be integrable. Then 
        \begin{equation*}
            \int_{U \in \U(d)} g(U) \ d\nu(U) =
        \int_{f \in \FF_d} \int_{(U,f) \in \Pi_2^{-1}(f)} g(U) \ \frac{NJ(\Pi_1)(U,f)}{\mathrm{Vol} \ \Pi_1^{-1}(U) NJ(\Pi_2)(U,f)} \ d\cV_A|_{\Pi_2^{-1}(f)} \ d\nu(U)
        \end{equation*}
\end{enumerate}
\end{proposition}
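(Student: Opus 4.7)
The plan is to apply the double fibration trick of Section 3.5 to the triple $(X,Y_1,Y_2)=(\cV_A,\U(d),\FF_d)$ with the projections $(F_1,F_2)=(\Pi_1,\Pi_2)$. Both parts of the proposition then come out as two applications of the coarea formula (Theorem \ref{coarea}) to a single cleverly chosen integrand on $\cV_A$.

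First I would verify the submersion hypotheses. By Proposition \ref{summary}, for $\nu$-almost every $U$ the fiber $\Pi_1^{-1}(U)$ consists of $d!$ discrete points, and every fiber $\Pi_2^{-1}(f)$ is a torus coset of dimension $d$. A dimension count gives $\dim\cV_A=\dim\U(d)=d^2$ and $\dim\FF_d=d^2-d$, so on the full-measure open set where $UA$ has distinct eigenvalues both $\Pi_1$ and $\Pi_2$ are almost submersions in the sense of Definition \ref{almost_submersions}, and Theorem \ref{coarea} applies to each.

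For part (1) I would set
\[
    \tilde h(U,f):=\frac{NJ(\Pi_2)(U,f)}{\mathrm{Vol}\,\Pi_2^{-1}(f)}\,h(f).
\]
Coarea along $\Pi_2$ collapses the inner fiber integral to $h(f)$: the $NJ(\Pi_2)$ in $\tilde h$ cancels the one produced by the formula, and the constant $h(f)$ integrates against the total fiber mass $\mathrm{Vol}\,\Pi_2^{-1}(f)$. This yields $\int_{\cV_A}\tilde h\,d\cV_A=\int_{\FF_d}h\,d\mu$. Applying coarea along $\Pi_1$ to the \emph{same} $\tilde h$ turns the inner integral into the counting sum over the $d!$ points of $\Pi_1^{-1}(U)$, producing the right-hand side of (1). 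Part (2) is symmetric: take
\[
    \tilde g(U,f):=\frac{NJ(\Pi_1)(U,f)}{\mathrm{Vol}\,\Pi_1^{-1}(U)}\,g(U),
\]
and run the two coarea integrations with the roles of $\Pi_1$ and $\Pi_2$ swapped (the outer measure on the right of (2) should then read $d\mu(f)$, which I take to be a typo).

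The step I expect to be the main obstacle is the normalization bookkeeping. Theorem \ref{coarea} is stated for unnormalized Riemannian volumes, while $\mu$ and $\nu$ are probability measures proportional to them. The division by $\mathrm{Vol}\,\Pi_2^{-1}(f)$ inside $\tilde h$, and by $\mathrm{Vol}\,\Pi_1^{-1}(U)$ inside $\tilde g$, is exactly the renormalization needed so that the total-volume factors $\mathrm{Vol}\,\FF_d$ and $\mathrm{Vol}\,\U(d)$ introduced by switching from Riemannian volumes to $\mu$ and $\nu$ cancel against the volume of $\cV_A$ coming from the middle integral. The homogeneity provided by Proposition \ref{summary}\textit{(2)} ensures that $\mathrm{Vol}\,\Pi_2^{-1}(f)$ is independent of $f$, which is what allows this cancellation to be uniform; verifying that no stray total-volume ratio is left behind is the one calculation I would check with real care.
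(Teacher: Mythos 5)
Your proposal is correct and is exactly the approach the paper takes: the paper's entire proof of this proposition is the single line ``We simply apply the double fibration trick twice,'' and you have spelled out precisely what that means by instantiating the trick with $(X,Y_1,Y_2)=(\cV_A,\U(d),\FF_d)$, $(F_1,F_2)=(\Pi_1,\Pi_2)$, and the auxiliary integrands $\tilde h$ and $\tilde g$. You are also right that the $d\nu(U)$ in the outer integral of part (2) is a typo for $d\mu(f)$.
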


\begin{proof}
    We simply apply the double fibration trick twice. 
\end{proof}

\subsubsection{}

We now analyze all of the terms. First some definitions. 

Denote the evaluation map $\Phi_A:\U(d) \times \FF_d \to \FF_d$ as $$\Phi_A(U,f) = (UA)f$$ and its lift $\hat{\Phi}_A:\U(d) \times \FF_d \to \FF_d \times \FF_d$ as $$\hat{\Phi}_A(U,f) = (\Phi(U,f),f).$$ Then $$\cV_A = \hat{\Phi}^{-1}_A(\{(f,f) \in \FF_d \times \FF_d\}).$$ Denote the partial derivative of $\Phi_A$ along $U(d)$ and $\FF_d$ as $\cD_{U(d)} \Phi_A(U,f)$ and  $\cD_{\FF_d} \Phi_A(U,f)$ respectively. 

Secondly, given a $(U,f) \in \cV_A$, $f = \phi(v_1,...,v_d)$ where $v_1,...,v_n$ are eigenvectors of $UA$. If we presume these eigenvectors correspond to eigenvalues $|\lambda_1| > ... > |\lambda_d|$, we can index each flag fixed by a fixed $UA$ with elements in the symmetric group -- that is $UA$ also fixes $\phi(v_{\sigma(1)},...,v_{\sigma(d)})$. 

\begin{proposition} \label{thing}
    We have the following.

    \begin{enumerate}
        \item[a.] 
        Suppose $f = \phi(v_{\sigma(1)},...,v_{\sigma(d)}) \in \Pi_2(\cV_A)$ fixed by some $UA.$ We have that
        \begin{align*}
            NJ(\Pi_1)(U,f) &\overset{(1)}{=} |\det(\mathrm{id}_{T_f\FF_d} - \cD_{\FF_d}\Phi_A(U,f))| \\
            &\overset{(2)}{=} \prod \limits_{j < i} \left| 1 - \frac{\lambda_{\sigma(i)}}{\lambda_{\sigma(j)}}\right|.
        \end{align*}
        \item[b.] 
        \begin{align*}
            NJ(\Pi_2)(U,f) &\overset{(3)}{=} |\det (\cD_{\U(d)} \Phi_A(U,f) \cD_{\U(d)} \Phi_A(U,f) )^*| \\
            &\overset{(4)}{=}\mathrm{Vol}(\T^d)\\
            &\overset{(5)}{=} \mathrm{Vol}(\Pi_2^{-1}(f)) 
        \end{align*}
    \end{enumerate}
\end{proposition}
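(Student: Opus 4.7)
The plan is to realize $\cV_A$ near $(U,f)$ as the zero locus of a submersion, read off its tangent space by the implicit function theorem, and then compute both normal Jacobians from that description; the one non-routine input is an eigenvalue calculation for $\cD_{\FF_d}\Phi_A$ using the root-space structure of $\FF_d = \U(d)/\T^d$. Fix $(U,f)\in\cV_A$ with $f = \phi(v_{\sigma(1)},\dots,v_{\sigma(d)})$. By Lemma \ref{distinct_eigenvalues} we may assume $UA$ has eigenvalues of pairwise distinct modulus, which rules out $1$ as an eigenvalue of $\cD_{\FF_d}\Phi_A(U,f)$. Working in a local linearization of $\FF_d$ near $f$ and setting $\Psi(U,f) := \Phi_A(U,f) - f$, we have $\cV_A = \Psi^{-1}(0)$ locally, $\Psi$ is a submersion, and the implicit function theorem yields
\[
T_{(U,f)}\cV_A = \bigl\{(u,\xi)\in T_U\U(d)\oplus T_f\FF_d : (\mathrm{id}-\cD_{\FF_d}\Phi_A)\xi = \cD_{\U(d)}\Phi_A\,u\bigr\},
\]
so that $\cV_A$ is locally the graph of $u\mapsto \xi(u) := (\mathrm{id}-\cD_{\FF_d}\Phi_A)^{-1}\cD_{\U(d)}\Phi_A\,u$.

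For part (a), identity (1) is the coarea/IFT computation applied to this graph: $\Pi_1$ inverts the graph parametrization, and after transporting the ambient metric through, its normal Jacobian equals $|\det(\mathrm{id}-\cD_{\FF_d}\Phi_A)|$. For (2) I would decompose $T_f\FF_d$ into root spaces $\bigoplus_{i<j} E_{ij}$ associated to $\FF_d = \U(d)/\T^d$; the line $E_{ij}$ records the infinitesimal motion of $v_{\sigma(i)}$ in the direction of $v_{\sigma(j)}$, and $\cD_{\FF_d}\Phi_A$ acts on it by the scalar $\lambda_{\sigma(j)}/\lambda_{\sigma(i)}$. Multiplying $1-\lambda_{\sigma(j)}/\lambda_{\sigma(i)}$ over all $i<j$ (equivalently over $j<i$ after swapping indices) then yields (2).

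For part (b), the same coarea template applied to $\Pi_2$ gives (3): on the graph parametrization $D\Pi_2$ sends $u$ to $\xi(u)$, its kernel is $\ker\cD_{\U(d)}\Phi_A$, and unwinding the induced metric produces the stated formula involving $\cD_{\U(d)}\Phi_A\cdot(\cD_{\U(d)}\Phi_A)^*$. For (4) I would recognize $\cD_{\U(d)}\Phi_A(U,f)$ as the infinitesimal action of $\mathfrak{u}(d)$ on the flag $f$: its kernel is the Lie algebra of the isotropy group, which by Lemma \ref{qr} is a maximal torus, and its restriction to the orthogonal complement is an isometry onto $T_f\FF_d$. Hence the determinant in question equals the Haar volume of the stabilizer, namely $\mathrm{Vol}(\T^d)$. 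Equality (5) then follows from Proposition \ref{summary}(2), since $\Pi_2^{-1}(f) = U_2\T^d U_1$ is a two-sided translate of $\T^d$ and so carries the same Haar volume.

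The delicate point, and the main obstacle, is rigorously justifying (1) and (3): $\FF_d$ is not a vector space, so the local linearization used to define $\Psi$ affects the bookkeeping, and one must verify that the choice of chart drops out of the final formula. The cleanest intrinsic route is to replace $\Psi$ by the lifted map $\hat\Phi_A(U,f) = (\Phi_A(U,f),f)$ and view $\cV_A$ as the transverse preimage of the diagonal $\Delta\subset\FF_d\times\FF_d$; the normal bundle of $\Delta$ is canonically $T\FF_d$, which is the intrinsic target in which $\mathrm{id}-\cD_{\FF_d}\Phi_A$ lives, so the formula becomes coordinate-free. A secondary bookkeeping issue is the real-versus-complex determinant convention of Remark \ref{remarkable}: the root-space eigenvalues are complex, so one has to keep the convention consistent throughout to match the exponents in the product form of (2).
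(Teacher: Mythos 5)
Your proposal follows the paper's argument almost step for step: you realize $\cV_A$ intrinsically as the transverse preimage $\hat{\Phi}_A^{-1}(\Delta)$ and parametrize $T_{(U,f)}\cV_A$ as a graph via the implicit function theorem, your root-space diagonalization of $\cD_{\FF_d}\Phi_A$ in (2) is the Lie-theoretic repackaging of the paper's explicit $2\times 2$ computation along the curves $C_{ij}$ (and the swap $i \leftrightarrow j$ reconciles your index convention with the paper's $\prod_{j<i}$), and your appeal to Lemma \ref{distinct_eigenvalues} to guarantee $1 \notin \mathrm{spec}\,\cD_{\FF_d}\Phi_A$ is a mild simplification of the paper's blowup argument for Sard's theorem. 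One caveat worth making explicit in (4): the claim that $\cD_{\U(d)}\Phi_A$ restricted to $(\ker)^\perp$ is an \emph{isometry} onto $T_f\FF_d$ literally gives $\det(\cD_{\U(d)}\Phi_A\,\cD_{\U(d)}\Phi_A^*)^{1/2} = 1$, not $\mathrm{Vol}(\T^d)$; the paper's own wording has the same tension, and the extra volume factor comes from the normalization of the Riemannian volume on $\FF_d = \U(d)/\T^d$ relative to $\U(d)$, which neither write-up states explicitly (harmlessly, since (4) and (5) enter the double-fibration formula only through the ratio $NJ(\Pi_2)/\mathrm{Vol}(\Pi_2^{-1}(f))$, which is $1$ under any consistent convention).
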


Applying these to the previous proposition, we get the following. 

\begin{corollary}
    We have the following:
    \begin{enumerate}
        \item Let $h:\FF_d \to \R$ be continuous. Then
        \begin{align}
            \int_{f \in \FF_d} h(f) \ d\mu(f) =
            \int_{U \in U(d)} \sum_{\sigma \in \Sym(d)} h(\phi(v_{\sigma(1)},...,v_{\sigma(d)})) \frac{1}{ \prod \limits_{j < i} \left| 1 - \frac{\lambda_{\sigma(i)}(UA)}{\lambda_{\sigma(j)}(UA)} \right|^{2}} \ d\nu(U).
    \label{almost}
        \end{align}
    \item  
     Let $g: U(d) \to \R$ be integrable. Then 
        \begin{align}
        \int_{U \in \U(d)} g(U) \ d\nu(U) =
        \int_{f \in \FF_d} \int_{(U,f) \in \Pi_2^{-1}(f)} g(U) \ \prod \limits_{j < i} \left| 1 - \frac{\lambda_{\sigma(i)}(UA)}{\lambda_{\sigma(j)}(UA)} \right|^{2} \ d\cV_A|_{\Pi_2^{-1}(f)} \ d\nu(U) \label{the equivalent}
        \end{align}
    \end{enumerate}
\end{corollary}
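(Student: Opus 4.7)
My plan is to obtain the corollary by termwise substitution into the two identities of the preceding proposition, using Proposition \ref{summary} to describe the fibers of $\Pi_1$ and $\Pi_2$ and Proposition \ref{thing} to evaluate the normal Jacobians that appear in the integrands.

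For part (1), I would start from
$$\int_{\FF_d} h(f)\, d\mu(f) = \int_{\U(d)} \sum_{(U,f) \in \Pi_1^{-1}(U)} h(f)\, \frac{NJ(\Pi_2)(U,f)}{\mathrm{Vol}(\Pi_2^{-1}(f))\, NJ(\Pi_1)(U,f)}\, d\nu(U)$$
and perform three simplifications in sequence. First, by Proposition \ref{summary}(1), for $\nu$-almost every $U$ the fiber $\Pi_1^{-1}(U)$ is the set of $d!$ flags $\phi(v_{\sigma(1)}(UA),\ldots,v_{\sigma(d)}(UA))$ indexed by $\sigma \in \Sym(d)$, so the inner sum collapses into a sum over $\Sym(d)$. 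Second, Proposition \ref{thing}(b) asserts $NJ(\Pi_2)(U,f) = \mathrm{Vol}(\T^d) = \mathrm{Vol}(\Pi_2^{-1}(f))$, so these two factors cancel. Third, Proposition \ref{thing}(a), combined with Remark \ref{remarkable} to translate the complex determinant into the underlying real normal Jacobian, replaces $NJ(\Pi_1)(U,f)$ with $\prod_{j<i}|1-\lambda_{\sigma(i)}(UA)/\lambda_{\sigma(j)}(UA)|^2$. Inserting these three facts into the displayed identity yields formula (1).

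Part (2) proceeds symmetrically: I would start from the companion identity with the roles of $\Pi_1$ and $\Pi_2$ swapped, identify $\Pi_2^{-1}(f)$ as a translate of the maximal torus $\T^d$ via Proposition \ref{summary}(2) and integrate against the induced volume $d\cV_A|_{\Pi_2^{-1}(f)}$, cancel $NJ(\Pi_2)$ against $\mathrm{Vol}(\Pi_2^{-1}(f))$ exactly as before, and substitute the explicit formula for $NJ(\Pi_1)$ from Proposition \ref{thing}(a) and Remark \ref{remarkable}. The factor $\prod_{j<i}|1-\lambda_{\sigma(i)}/\lambda_{\sigma(j)}|^2$ now lands in the numerator rather than the denominator, giving formula (2).

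The only subtlety requiring care is keeping the complex-versus-real normal Jacobian distinction consistent: Proposition \ref{thing}(a) is naturally phrased using the complex determinant $|\det(\mathrm{id}_{T_f \FF_d} - \cD_{\FF_d}\Phi_A)|$, whereas the coarea formula (Theorem \ref{coarea}) is stated for real Riemannian manifolds, and it is precisely Remark \ref{remarkable} that furnishes the exponent $2$ appearing in the corollary. Once this translation is done uniformly on both sides, the rest is formal substitution, and I do not anticipate any further obstacle beyond this bookkeeping.
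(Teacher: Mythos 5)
Your proposal takes the same route as the paper: the paper's entire justification for this corollary is the single sentence ``Applying these to the previous proposition,'' meaning one substitutes the fiber descriptions of Proposition \ref{summary} and the normal Jacobian computations of Proposition \ref{thing} (with Remark \ref{remarkable} supplying the exponent $2$) directly into the two double-fibration identities. Your treatment of part (1) spells this out correctly.

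One slip in part (2): in the second identity of the preceding proposition the denominator reads $\mathrm{Vol}(\Pi_1^{-1}(U)) \cdot NJ(\Pi_2)(U,f)$, not $\mathrm{Vol}(\Pi_2^{-1}(f)) \cdot NJ(\Pi_2)(U,f)$, so there is no literal cancellation of $NJ(\Pi_2)$ against $\mathrm{Vol}(\Pi_2^{-1}(f))$ ``exactly as before.'' What actually happens is that both factors are constants independent of $(U,f)$: $\mathrm{Vol}(\Pi_1^{-1}(U)) = d!$ since the fiber is generically a set of $d!$ points by Proposition \ref{summary}(1), and $NJ(\Pi_2)(U,f) = \mathrm{Vol}(\T^d)$ by Proposition \ref{thing}(b). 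These constants are then absorbed into the normalization of the measures, a step the paper leaves implicit (the stated corollary simply omits the factor $1/(d!\,\mathrm{Vol}(\T^d))$). So your conclusion and overall approach are right, and your bookkeeping is no less careful than the source's, but the mechanism by which the non-Jacobian factors disappear in part (2) is normalization rather than cancellation.
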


\begin{proof}[Proof of Proposition \ref{thing}]
We need to explan all five equalities. 

\noindent \textbf{Equality (1):} The map $\Pi_1:\cV_A \to \U(d)$ is an almost submersion. We wish to say, by Sard's theorem, that almost every point $U \in \U(d)$ is a regular value. An issue arises that the image under $\Pi_1^{-1}$ of the set $U \in U(d)$ such that $UA$ has repeated eigenvalues form a singularity set in $\cV_A$. To deal with this, we take the algebraic set of $$S = \{ (U,f): UAf = f \text{ and } UA \text{ has repeat modulus eigenvalues}\} \subset \cV_A.$$ It is a subvariety of lower dimension and hence is a set of $d\cV_A$--measure $0$. Therefore we can construct the blowup of $cV_A$ along $S$, denoted by $\text{BL}_S(\cV_A)$. In the blowup process, assuming  $S$ has dimension $k$, we replace it with a subvariety of $\CP^1 \times ... \times \CP^2$, where the product is taken $k$ times. (See \cite{voisin2003hodge} p. 75 section 3.3.3 for more details on why we can always do this.) Now we can apply Sard's theorem to $\tilde{\Pi}_1:\text{BL}_S(\cV_A) \setminus  S \to U(d) \setminus \Pi_1(S)$, because now $\Pi_1$ is biholomorphic and hence $C^\infty$. Blowing down, we can say that for the original $\Pi_1$, almost every point in $U(d)$ is a regular value. This argument is essentially the proof of Bertini's theorem from algebraic geometry. 

Now, a $U \in U(d)$ is a regular value for $\Pi_1$ if and only if for all $f \in \FF_d$ with $(U,f) \in \cV_A$, the map $\mathrm{id}_{T_f\mathbb{F}_d} - \cD_{\FF_d} \Phi_A(U,f)$ is invertible. This implies that the tangent space of $\cV_A$ at $(U,f)$ is given by $$T_{(U,f)}\cV_A = \{(\dot{U}, \dot{f}) \in T_U \U(d) \times T_f \FF_d: \dot{f} = (\mathrm{id}_{T_f\F_d} - \cD_{\F_d}\Phi_A(U,f))^{-1} \cD_{\U(d)} \Phi_A (U,f) \dot{U} \}.$$ So we conclude that $$NJ(\Pi_1)(U,f) = |\mathrm{id}_{T_f\F_d} - \cD_{\FF_d}\Phi_A(U,f)|.$$

\noindent \textbf{Equality (2):} The quantity $\cD_{\FF_d}\Phi_A(U,f)$ is the derivative of the attracting flag for the QR --algorithm, and QR -- factorization is a Morse Smale dynamical system, as explained in \cite{shub1987g}. First, we treat dimension $2$. Let $f$ be a fixed flag in $\FF_2 = \CP^1$ and let $f^\perp$ be orthogonal to $f$. With respect to the basis $f, f^\perp$,$$ UA = \begin{bmatrix}
    \lambda_1 & b \\
    0 & \lambda_2
\end{bmatrix},$$ and obviously $f = [1,0]^T$ and $f^\perp = [0,1]^T$. Let $\tilde{f}$ be a small perturbation $[1,\epsilon]^T$. Then $$UA\tilde{f} = \begin{bmatrix}
    \lambda_1 + b\epsilon \\
    \lambda_2 \epsilon \end{bmatrix}\overset{\text{parallel to}}{=} 
    \begin{bmatrix}
        1 \\
       \frac{\lambda_2 \epsilon}{\lambda_1 + b\epsilon}
    \end{bmatrix}
    \approx 
    \begin{bmatrix}
        1 \\
       \frac{\lambda_2 }{\lambda_1 }
    \end{bmatrix}.
$$
To get equality (2), we now simply induct this dimension $2$ proof. We consider a flag $f = \phi(v_{\sigma(1)},...,v_{\sigma(d)}) \in \FF_d$ such that some $UA$ fixes it and $v_i$ are eigenvectors corresponding to $|\lambda_1(UA)| > \dots > |\lambda_d(UA)|$. Let $ij \in \binom{d}{2}$. Let $$C_{i,j} = \{\phi(w_1,...,w_d): w_i \text{ are orthonormal and } w_k = v_k \text{ if } k \notin ij \}.$$ Along $C_{ij}$, we do the dimension $2$ proof and conclude that $$\cD_{\FF_d}\Phi_A(U,f)|_{C_{ij}} =\cD_{\FF_d}\Phi_A(U, \phi(v_{\sigma(1)},...,v_{\sigma(d)}))|_{C_{ij}}  = \frac{\lambda_\sigma(i)}{\lambda_\sigma(j)}.$$

\noindent \textbf{Equality (3): } 
By Definition \ref{nonlinearNJ}, the numerator is $$NJ(\Pi_2)(U,f) = |\det \cD_{U(d)} \Phi_A(U,f) \cD_{U(d)} \Phi_A(U,f)^*|,$$ noting that the map is complex. 

\noindent \textbf{Equality (4): } 
The idea is to argue that $NJ(\Pi_2)(\mathrm{Id}_d, \phi(e_1,...,e_d)) = \mathrm{Vol}(\T_d)$, and then to argue that if we act on the flag $\phi(e_1,\dots,e_d)$ by the unitary group, that the normal jacobian is preserved. 
Given an $f$, let $\Phi_f(U) = UV\T^d$ where $V$ is such that the flag $Af = \phi(V\T^d). $
We have that $\cD_{U(d)}\Phi_A(U,F) = \cD_{U(d)}\Phi_{f}(U).$ The normal jacobian of $\Phi_f$ is independent of $f$, $V$ and $U$ and equals $\mathrm{Vol} (\T^d)$ for the following reason. (The proof we recount is the content of Proposition 9 and Corollary 21 in \cite{dedieu2003mike}). First, let $V = \mathrm{Id}.$ Then $\Phi_f(U) = U\mathbb{T}^d$ is the projection from $\U_d \to \U_d/\T^d.$ The normal to the fiber is mapped isometrically to $\U_d / \T^d.$ Now $R_W:\U(d) \to \U(d)$  defined as $R_W(U) = U$ is an isometry of $\U(d)$ and the fibers of $\Phi_f$ are reciprocal images of $R_W$ of the fibers $\Phi_{\phi(\mathrm{Id})}$. 

\noindent \textbf{Equality (5)} This is Proposition \ref{summary}. 

\end{proof}

\subsection{Computing the coefficient} \label{computationally}


Now, Lemmas \ref{summary} and \ref{qr} gives that $$\int_{(U,f) \in \Pi_2^{-1}(f)} \prod \limits_{j < i} \left| 1 - \frac{\lambda_{\sigma(i)}(UA)}{\lambda_{\sigma(j)}(UA)} \right|^{2} 
 d \cV_A|_{\Pi_2^{-1}(f)} = \int_{\T^d}  \prod \limits_{j < i} \left| 1 - \frac{\lambda_{\sigma(i)}(UA)}{\lambda_{\sigma(j)}(UA)} \right|^{2} \ d\nu_{\T^d}(U)$$ as each fiber of $\Pi_2$ is isomorphic to $\T^d.$ Now we compute the right hand side.We follow the analysis contained in the proof of Proposition 8 in \cite{dedieu2003mike}.

 Recall that the \emph{Van der Monde determinant} is $$V(\lambda_1,...,\lambda_n) = \det \begin{bmatrix}
     1 & \lambda_1 & \dots & \lambda_1^{n-1} \\ 
     1 & \lambda_2 & \dots & \lambda_2^{n-1} \\
     \vdots & \vdots& \ddots&\vdots \\
     1 & \lambda_n & \dots & \lambda_n^{n-1} 
 \end{bmatrix} = \prod \limits_{j<i} (\lambda_i - \lambda_j). $$ Then \begin{align*}
     \int_{\T^d}  \prod \limits_{j < i} \left| 1 - \frac{\lambda_{\sigma(i)}(UA)}{\lambda_{\sigma(j)}(UA)} \right|^{2} \ d\nu_{\T^d}(U) 
     &= \int_{\T^d}  \frac{\prod \limits_{j < i}|\lambda_{\sigma(j)} (UA) - \lambda_{\sigma(i)}(UA)|^2}{\prod \limits_{j < i}|\lambda_{\sigma(j)}(UA)|^2}  \\
     &= \int_{\T^d} \frac{|V(\lambda_1(UA),...,\lambda_n(UA))|^2}{\prod \limits_{j < i} |\lambda_j(UA)|^2} \ d\nu_{\T^d}(U)
 \end{align*} 

Leibniz formula gives 
$$|V(\lambda_1,...,\lambda_d)|^2 = \sum \limits_{\sigma, \tau \in \Sym(d)} \mathrm{sgn}(\sigma)\mathrm{sgn}(\tau) \lambda_1^{\sigma(1) - 1} \overline{\lambda}_1^{ \ \tau(1)-1} \dots \lambda_n^{\sigma(d)-1} \overline{\lambda}_d^{\ \tau(d)-1}.$$

Now $$\int_{0}^1 \lambda_k^{\sigma(k)-1} \overline{\lambda}_k^{\tau(k)-1} \ d\theta_k = |\lambda_k|^{\sigma(k) + \tau(k) - 2} \int_0^{1} e^{i\theta_k(\sigma(k) - \tau(k))} = \begin{cases}
    1 \ \ \text{if } \sigma(k) = \tau(k) \\
    0 \ \ \text{otherwise}.
\end{cases}$$

This gives that \begin{align*}
    \int_{\T^d}  \prod \limits_{j < i} \left| 1 - \frac{\lambda_{\sigma(i)}(UA)}{\lambda_{\sigma(j)}(UA)} \right|^{2} \ d\nu_{\T^d}(U)  = \sum \limits_{\sigma \in \Sym(d)} \frac{|\lambda_1|^{2\sigma(1) - 2} \dots |\lambda_d|^{2\sigma(d) - 2}}{\prod \limits_{j<i} |\lambda_j|^2}. 
\end{align*}

\begin{lemma}
    $$\sum \limits_{\sigma \in \Sym(d)}  \prod \limits_{j=1}^d |\lambda_j|^{2\sigma(j) - 2}=  \sum \limits_{\sigma \in \Sym(d)} \prod \limits_{j=1}^d |\lambda_j|^{2(d-\sigma(j))}$$
\end{lemma}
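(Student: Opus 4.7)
The plan is to establish the identity by a simple reindexing of the sum over $\Sym(d)$, exploiting an involution on the symmetric group that converts one exponent pattern into the other.

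More precisely, I would define the map $\iota : \Sym(d) \to \Sym(d)$ by $\iota(\sigma)(j) := d+1 - \sigma(j)$. First I would check that $\iota(\sigma)$ is indeed a permutation: since $j \mapsto d+1 - j$ is a bijection of $\{1,\dots,d\}$, composing with the permutation $\sigma$ yields another permutation, and a quick computation shows $\iota \circ \iota = \mathrm{id}$, so $\iota$ is an involution and in particular a bijection of $\Sym(d)$.

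Next, I would carry out the change of variables $\tau = \iota(\sigma)$ in the left-hand sum. The key algebraic observation is that for each $j$,
\begin{equation*}
2\tau(j) - 2 \;=\; 2\bigl(d+1 - \sigma(j)\bigr) - 2 \;=\; 2\bigl(d - \sigma(j)\bigr),
\end{equation*}
so after substitution one obtains
\begin{equation*}
\sum_{\tau \in \Sym(d)} \prod_{j=1}^d |\lambda_j|^{2\tau(j)-2} \;=\; \sum_{\sigma \in \Sym(d)} \prod_{j=1}^d |\lambda_j|^{2(d-\sigma(j))},
\end{equation*}
which is exactly the desired identity (after relabeling the dummy variable on the right).

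There is no genuine obstacle here; the only thing to be careful about is verifying that $\iota$ really does map $\Sym(d)$ into itself bijectively, which is immediate. The statement is essentially the observation that both multisets of exponent tuples $\{(2\sigma(1)-2,\dots,2\sigma(d)-2) : \sigma \in \Sym(d)\}$ and $\{(2(d-\sigma(1)),\dots,2(d-\sigma(d))) : \sigma \in \Sym(d)\}$ coincide with the set of all permutations of $(0,2,4,\dots,2d-2)$, so each sum equals $\sum_{\sigma \in \Sym(d)} \prod_j |\lambda_j|^{2(\sigma(j)-1)}$.
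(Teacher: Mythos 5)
Your proof is correct and is essentially identical to the paper's: both define the involution $\sigma \mapsto (j \mapsto d+1-\sigma(j))$ on $\Sym(d)$ and reindex the sum. (Your write-up is arguably cleaner — the paper's final displayed line contains an apparent typo, writing $2(d-\tau(\sigma)(j)) = 2d - 2\sigma(j)$ where it should read $2\sigma(j)-2$.)
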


\begin{proof}
    Define a bijection $\tau:\Sym(d) \to \Sym(d)$ as $$\tau(\sigma)(j) = d + 1 - \sigma(j).$$ Then $$2\sigma(j) - 2 = 2(d-\tau(\sigma)(j)) = 2d - 2\sigma(j).$$
\end{proof}

\begin{proposition}[Theorem 22 in \cite{dedieu2003mike}] \label{almostalmost}
        Let $h:\FF_d \to \R$ be continuous. Then
        \begin{align*}
            \int_{f \in \FF_d} h(f) \ d\mu(f)
           &=  \int_{U \in U(d)} \sum_{\sigma \in \Sym(d)} h(\phi(v_{\sigma(1)},...,v_{\sigma(d)})) \frac{\prod \limits_{j=1}^d |\lambda_j|^{2(d-\sigma(j))}}{\sum \limits_{\pi \in \Sym(d)} \prod_{j=1}^d |\lambda_j|^{2(d-\pi(j))}  }\ d\nu(U).
        \end{align*}
\end{proposition}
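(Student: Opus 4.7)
The plan is to derive Proposition \ref{almostalmost} from equation \ref{almost} by substituting the Vandermonde torus integral computed in section \ref{computationally}. I would start with equation \ref{almost}, which already expresses $\int_{\FF_d} h \, d\mu$ as an integral over $\U(d)$ with weight $1/\prod_{j<i}|1 - \lambda_{\sigma(i)}(UA)/\lambda_{\sigma(j)}(UA)|^2$. The task is to massage this weight into the canonical form $p_\sigma(UA)$, which I will do by invoking the companion coarea identity \ref{the equivalent} to rewrite the $\nu$-integral as an iterated integral over $\FF_d$ and the fiber $\Pi_2^{-1}(f) \cong \T^d$ (Lemma \ref{qr}).

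The crucial observation, which powers the whole argument, is that along each fiber both the magnitudes $|\lambda_j(UA)|$ and the flags $\phi(v_{\sigma(1)}(UA),\ldots,v_{\sigma(d)}(UA))$ are \emph{constant}. Indeed, on the Schur basis adapted to $f$ the matrix $UA$ is upper triangular, and moving along $\Pi_2^{-1}(f) \cong \T^d$ amounts to multiplying the diagonal entries of $UA$ by unit phases; thus eigenvalue magnitudes and eigendirections (as lines) are preserved. Consequently the candidate weights $p_\sigma(UA)$ and the values $h_\sigma(UA)$ factor out of the inner torus integral.

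What remains is precisely the Vandermonde torus integral already computed in the excerpt: expanding $|V(\lambda_1,\ldots,\lambda_d)|^2$ by the Leibniz formula and applying the orthogonality relations $\int_{\T} e^{i\theta_k(\sigma(k)-\tau(k))} \, d\theta_k = \delta_{\sigma(k),\tau(k)}$ collapses the double sum to a single sum $\sum_\pi \prod_j|\lambda_j|^{2\pi(j)-2}$, which by the index-swap Lemma (via $\tau(\sigma)(j) = d+1-\sigma(j)$) equals $\sum_\pi \prod_j|\lambda_j|^{2(d-\pi(j))}$. This is exactly the denominator $Z(A) = \sum_\pi \prod_j |\lambda_j|^{2(d-\pi(j))}$ appearing in $p_\sigma$, providing the precise cancellation that converts the coarea weight into $p_\sigma$ and delivers the proposition.

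The main obstacle is the index bookkeeping. The Leibniz/Vandermonde expansion naturally produces exponents $2\pi(j)-2$, which must be converted to the canonical exponents $2(d-\pi(j))$ matching the definition of $p_\sigma$; this is precisely the role of the Lemma proved immediately above. One must also verify that the fiber-parameter permutation $\sigma_0$ witnessing $f = \phi(v_{\sigma_0(1)},\ldots,v_{\sigma_0(d)})$ is constant along $\Pi_2^{-1}(f)$ (which holds because the ordering of $|\lambda_j(UA)|$ does not vary under $\T^d$ phases), so that the re-indexing of $\sum_\pi$ that puts the Vandermonde output into the canonical form is unambiguous. Once these indexing matters are in place, the remaining algebra amounts to substituting the computed fiber integral into equation \ref{almost} and identifying the quotient as $p_\sigma(UA)$.
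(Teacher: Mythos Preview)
Your overall strategy—run the double fibration, identify the inner $\Pi_2$–fiber integral with the torus Vandermonde computation, and use the index–swap lemma to land on $p_\sigma$—is exactly what the paper does. But one step in your justification is false, and if taken literally it breaks the argument as you have written it.

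You assert that ``moving along $\Pi_2^{-1}(f)\cong\T^d$ amounts to multiplying the diagonal entries of $UA$ by unit phases; thus eigenvalue magnitudes and \emph{eigendirections (as lines)} are preserved.'' The first half is incomplete and the second half is wrong. In the Schur basis adapted to $f$, traversing the fiber left–multiplies the upper–triangular matrix $UA$ by $\mathrm{diag}(e^{i\alpha_1},\dots,e^{i\alpha_d})$, which rescales \emph{entire rows} (not just diagonal entries) by phases. Already for $d=2$, with $UA=\begin{pmatrix}\mu_1 & c\\ 0 & \mu_2\end{pmatrix}$, the second eigendirection $[c/(\mu_2-\mu_1):1]$ becomes $[e^{i\alpha}c/(e^{i\beta}\mu_2-e^{i\alpha}\mu_1):1]$, which is a different point of $\CP^1$ whenever $\alpha\neq\beta$. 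So for $\tau\neq\sigma_f$ the flag $\phi(v_{\tau(1)}(UA),\dots,v_{\tau(d)}(UA))$, and hence $h_\tau(UA)$, genuinely varies along the fiber. If you literally feed the full integrand of \eqref{almost} (a sum over \emph{all} $\sigma$) into \eqref{the equivalent} and try to factor every $h_\sigma$ out of the torus integral, that step fails.

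The fix is simply to work on $\cV_A$ rather than on $\U(d)$: at a point $(U,f)\in\cV_A$ there is a \emph{single} permutation $\sigma_f$ with $f=\phi(v_{\sigma_f(1)}(UA),\dots,v_{\sigma_f(d)}(UA))$, and along $\Pi_2^{-1}(f)$ both $h(f)$ (trivially) and $p_{\sigma_f}(UA)$ (because the $|\lambda_j|$ are constant, as you correctly note) are constant. Running the double fibration once with $H(U,f)=h(f)\,p_{\sigma_f}(UA)\,NJ(\Pi_1)(U,f)$ gives the right–hand side of the Proposition via $\Pi_1$, and via $\Pi_2$ yields
\[
\int_{\FF_d} h(f)\,p_{\sigma_f}\cdot\frac{1}{\mathrm{Vol}(\T^d)}\int_{\Pi_2^{-1}(f)} NJ(\Pi_1)\; d\cV_A\big|_{\Pi_2^{-1}(f)}\, d\mu(f),
\]
whereupon the torus Vandermonde computation and the index–swap lemma produce exactly $1/p_{\sigma_f}$, collapsing the integrand to $h(f)$. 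This is the paper's (implicit) argument; your detour through \eqref{almost} followed by \eqref{the equivalent} is unnecessary, and the false eigendirection claim is not needed once you track the single fiber–dependent permutation $\sigma_f$ rather than a free sum over all $\sigma$.
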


By the discussion in section \ref{discussion}, this finishes the proof of Theorem \ref{DS measures}.

\subsection{Example in dimension 2}

To illustrate the calculation, we analyze the dimension 2 case. Note that $\FF_2 = \CP^1$. By Proposition \ref{almost}, for continuous $h: \CP^1 \to \R$, $$\int_{v \in \CP^1} h(v) \ d\mu(v) = \int_{U \in \SU(2)} h(v_1(UA)) \sum \limits_{\sigma \in \Sym(2)} \prod \limits_{j < i} \left| 1 - \frac{\lambda_{\sigma(i)(UA)}}{\lambda_{\sigma(j)(UA)}}\right| \ d\nu(U).$$

In Subsection \ref{computationally}, we have that for $\sigma = \text{id}$ \begin{align*}
    \int_{U \in \U(d)}\prod \limits_{j < i} \left| 1 - \frac{\lambda_{\sigma(i)}(UA)}{\lambda_{\sigma(j)}(UA)} \right|^2 \ d\nu(U) &= \int_{\theta_1,\theta_2 \in [0,1]} \left| 1 -  \frac{ e^{2\pi i\theta_1}\lambda_2}{e^{2 \pi i\theta_2}\lambda_1}\right|^{2} \ d\theta_1 d\theta_2 \\
    &= \int_0^{1} \left| 1 - e^{2 \pi i\theta} \frac{ \lambda_2}{\lambda_1}\right|^{2} \ d\theta \\
    &=\frac{\lambda_1^2 + \lambda_2^2}{\lambda_1^2}.
\end{align*}

Now, by doing the same computation with $\sigma = (1 \ 2)$, and inverting, gives Equation \ref{dim2weights}.

\section{Experimental Results: The Arnold Family}

\begin{figure}
    \centering
    \includegraphics[width=\textwidth]{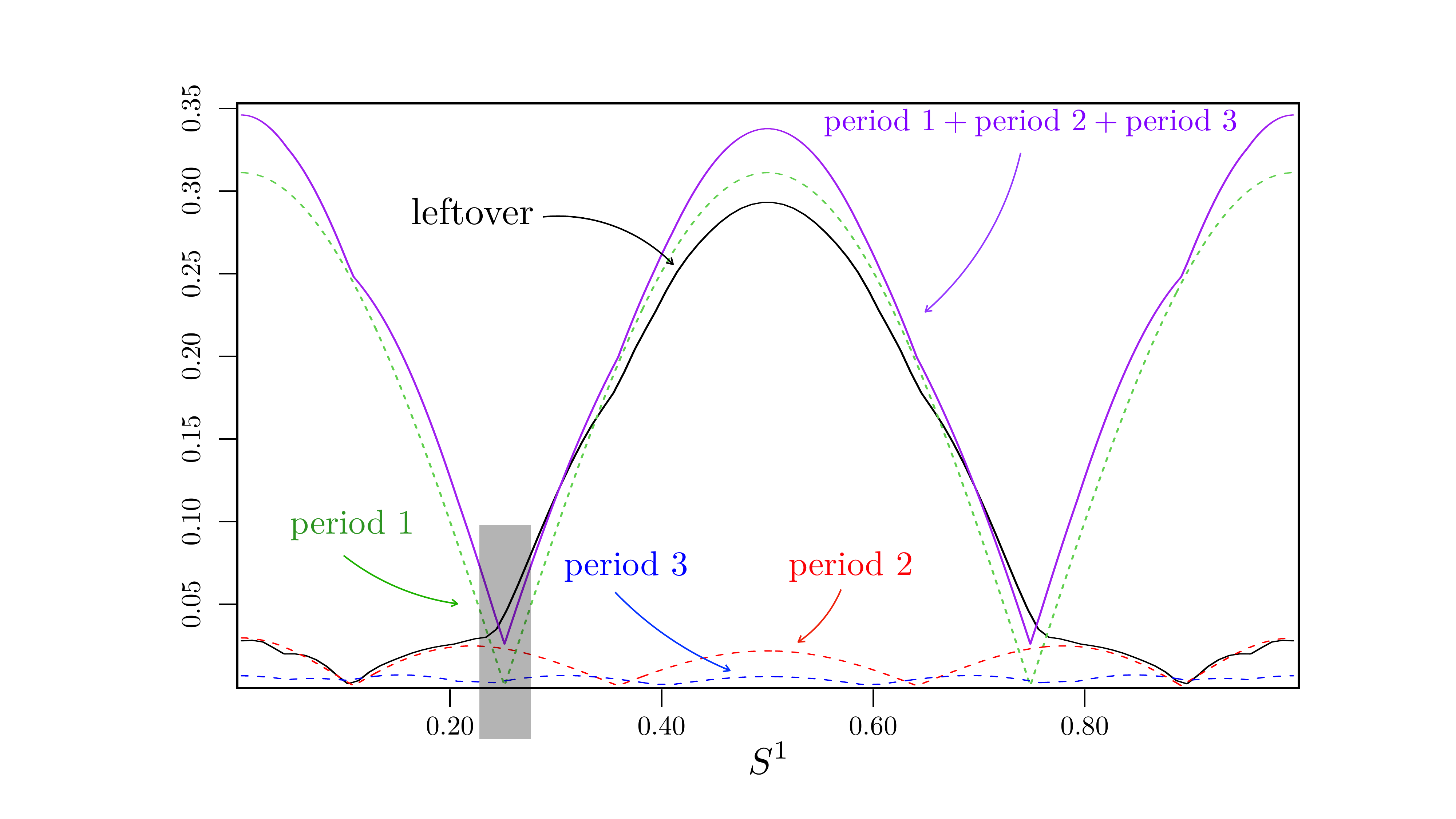}
    \caption{In black, which we call the leftover, we depict the density of $\cE(x) = 1 - \frac{d}{d\mu} \int_{r \in [0,1] \setminus \Q} \tau^+_{r,0.05} dr.$ 
    For there to be a Dedieu Shub  measure for the Arnold family, $\sum_{p/q \in [0,1] \cap \Q} \mu(I_{p,q}) \frac{d}{d\mu}\left(\int_{c \in I_{p/q}} \tau^+_{c,\epsilon} + \tau^i_{c,\epsilon} \ dc\right)(x) \geq  \cE(x) $ for every $x \in X$. However, in the grey shaded box, we see this is not possible.   
    }
    \label{fig:enter-label}
\end{figure}

One can ask whether or not Dedieu--Shub measures exist for larger groups, past the algebraic setting. The set of ergodic measures forms a Borel space with a measure $m$. Informally, the measure $m$ assigns a "weight" to each ergodic invariant measure. Typically, one takes this to be a probability, but we take it to assign each measure a mass of 1.  Observe that one very weak obstruction obstruction to the existence of Dedieu--Shub measures is if given a $g \in G$ we have that \begin{equation}
    \int_K \int_{ \cE_{gk}}  \mu \ \ dm(\mu) \ d \Haar(k) \not \geq \mu \ \ \label{obstruction}
\end{equation}
where $\cE_{f}$ is the space of all ergodic $f$--invariant measures. If \ref{obstruction} holds, for any $g \in G$, \emph{then there can be no Dedieu--Shub measure}. So to show that a Dedieu--Shub measure does not exist for a larger group, for example $G = \text{Diff}^\infty(S^1)$, our goal is to produce a single coset for which \ref{obstruction} holds.

Our candidate to demonstrate \textit{experimentally} that there is no Dedieu -- Shub measure for $\text{Diff}^\infty(S^1)$ is \textit{the Arnold family}, given by $f_{c, \epsilon} = x + c + \epsilon \sin (2 \pi x) \mod 1$ for $\epsilon \in [0,2\pi]$ and $c \in [0,1]$. 
The Arnold family fits within our framework. The compact subgroup acting on $S^1 = \R / \Z$ is the rotation group $K = \{R_c = x+c : c \in [0,1]\} = S^1$. The map $g(x) = x + \epsilon \sin 2 \pi x \mod 1$ converges to a fixed point, and only by composing it with the family of rotation $R_c(x) = x+c$ produces a comparatively rich dynamics. Much is known about the Arnold family, and we summarize the relevant facts in the following theorem.

\begin{theorem}[\cite{arnold1965small} ]
    Fix $\epsilon > 0$. For every rational number $p/q$, there exists an interval $I_{p,q} \in [0,1]$ of positive measure such that for every $c \in I$, $f_{c, \epsilon}$ has rotation number $p/q$.
  
Furthermore, if $f_{c,\epsilon}$ has rational rotation number $p/q$, then $f_{c,\epsilon}$ has a unique repelling limit cycle and a unique attracting limit cycle, both of period $q$. If $f_{c,\epsilon}$ has irrational rotation number, it preserves a unique absolutely continuous invariant probability measure and is smoothly conjugate to an irrational rotation.
\end{theorem}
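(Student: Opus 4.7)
The plan is to assemble the classical circle-diffeomorphism toolkit of Poincaré, Denjoy, Arnold, and Herman. First, for $\epsilon \in (0, 1/(2\pi))$ the map $f_{c,\epsilon}$ is a real-analytic orientation-preserving diffeomorphism of $\R/\Z$, so its rotation number $\rho(c) := \rho(f_{c,\epsilon})$ is well defined. Since the lift $F_{c,\epsilon}(x) = x + c + \epsilon \sin 2\pi x$ is strictly increasing in $c$ for every fixed $x$, the function $c \mapsto \rho(c)$ is continuous and monotone non-decreasing on $[0,1]$, with $\rho(0) = 0$ and $\rho(1) = 1$. Consequently each level set $I_{p,q} := \rho^{-1}(p/q)$ is a closed subinterval of $[0,1]$; what needs to be shown is that it is nondegenerate.

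Second, to get positive measure of $I_{p,q}$, I would use structural stability of hyperbolic periodic orbits. By the intermediate value theorem there exists some $c_0$ with $\rho(c_0) = p/q$, and hence a periodic orbit of type $(p,q)$ for $f_{c_0,\epsilon}$. Since the multiplier along this orbit is analytic in $c$ and non-constant, an arbitrarily small analytic perturbation in $c$ produces a hyperbolic periodic orbit. A hyperbolic periodic orbit of a $C^1$ diffeomorphism persists under $C^1$ perturbations without changing the rotation number, so an open neighborhood of this parameter lies in $I_{p,q}$. The endpoints of $I_{p,q}$ correspond to saddle-node bifurcations of the $q$-th iterate.

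Third, for the structure inside $I_{p,q}$, I would study $g_{c,\epsilon}(x) := F_{c,\epsilon}^q(x) - x - p$, a real-analytic $1$-periodic function of $x$ whose zeros are exactly the period-$(p,q)$ points. Real-analyticity plus the diffeomorphism property forces the zero set to be finite on a fundamental domain and forces consecutive zeros to alternate between attracting and repelling fixed points of $f_{c,\epsilon}^q$. Uniqueness of the attracting and repelling cycles (generically, i.e.\ in the interior of $I_{p,q}$) reduces to checking that $g_{c,\epsilon}$ has exactly two zeros modulo $1$, which one establishes by a Sturm/monotonicity estimate using that $\epsilon \sin 2\pi x$ has only two critical points per period.

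Finally, in the irrational case the argument splits into two classical inputs, and this is where I expect the main technical burden to sit. Denjoy's theorem applied to a $C^2$ circle diffeomorphism without periodic orbits yields a unique topological conjugacy to $R_{\rho(c)}$; pushing forward Lebesgue measure under this conjugacy produces the unique invariant probability measure, absolute continuity following from Katznelson--Ornstein style regularity because $f_{c,\epsilon}$ is analytic. Upgrading the conjugacy itself to a smooth one requires the Herman--Yoccoz global linearization theorem: for $C^\infty$ (a fortiori analytic) circle diffeomorphisms with any irrational rotation number, $f_{c,\epsilon}$ is smoothly conjugate to the rotation $R_{\rho(c)}$. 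The hard part is invoking the full Herman--Yoccoz result correctly, since its proof rests on delicate KAM/small-divisor estimates; everything else is a routine application of the standard theory.
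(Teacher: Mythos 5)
The paper does not supply its own proof of this statement -- it is cited to Arnold and implicitly to later Denjoy--Herman--Yoccoz theory -- so the only thing to assess is whether your sketch would actually carry through. The roadmap (Poincar\'e monotonicity of $\rho$, Arnold tongues via structural stability, Denjoy plus Herman--Yoccoz) is the standard one, but two of the steps are seriously undersupported.

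First, the uniqueness of the attracting and repelling $q$-cycles is not a ``Sturm/monotonicity'' consequence of $\sin(2\pi x)$ having two critical points per period. That observation settles $q=1$, where $F_{c,\epsilon}(x)-x-p = c-p+\epsilon\sin 2\pi x$ plainly has at most two zeros. For $q\ge 2$, the function $g_c(x)=F^q_{c,\epsilon}(x)-x-p$ is a trigonometric polynomial of degree growing with $q$, and nothing you wrote bounds its number of zeros on a fundamental domain by two. That each interior tongue of the Arnold family contains exactly one attracting and one repelling orbit is a genuine theorem peculiar to this family -- it fails for generic one-parameter families of analytic circle diffeomorphisms -- and requires an independent argument, for instance a complex-analytic count using the holomorphic extension of $f_{c,\epsilon}$ to a cylinder $\{\,|\mathrm{Im}\,z|<r\,\}$. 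Your sketch skips the key input.

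Second, and more fundamentally, the final clause of the theorem as quoted is not true for every irrational rotation number, and your appeal to Herman--Yoccoz conceals the problem rather than resolving it. Herman--Yoccoz linearization, like the Katznelson--Ornstein and Khanin--Sinai absolute-continuity results you cite, requires the rotation number to satisfy a Diophantine-type (Brjuno) condition. For Liouville rotation numbers there exist analytic circle diffeomorphisms -- Arnold himself produced examples -- whose unique invariant measure is singular and whose Denjoy conjugacy is not even absolutely continuous, let alone smooth. The Arnold family does realize Liouville rotation numbers (off the tongues, $c\mapsto\rho(c)$ is a continuous strictly increasing map hitting every irrational), so the parameter set where smooth linearization fails is nonempty, though of Lebesgue measure zero. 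Either the theorem needs a caveat such as ``for Lebesgue-a.e.\ $c$'' or ``for Diophantine $\rho(c)$,'' or you must supply an argument specific to the family; you should flag the gap rather than assert an unconditional Herman--Yoccoz conclusion.

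A smaller remark: for positive measure of $I_{p,q}$ the cleanest route avoids your ``perturb to a hyperbolic orbit'' step (which presupposes an analytically-varying multiplier at a possibly degenerate orbit). Note instead that $g_c$ is strictly increasing in $c$, so $I_{p,q}=\{c:\min_x g_c\le 0\le\max_x g_c\}$ is a closed interval whose length equals the oscillation $\max_x g_c-\min_x g_c$ at its left endpoint; this is positive unless $f^q_{c,\epsilon}$ is a rigid translation, which is what actually needs ruling out for $\epsilon>0$.
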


Because of the existence of an attractor and repeller, if $f_{c,\varepsilon}$ has rational rotation number, we call it \textit{hyperbolic}, and if it is irrational, then we call it \textit{elliptic}. In the hyperbolic setting, there are two ergodic measures supported on the repelling and attracting periodic orbits -- which we denote by $\tau_{c,\varepsilon}^-$ and $\tau_{c,\varepsilon}^+$ respectively -- and in the elliptic setting, there is simply one ergodic measure -- which we denote by $\tau_{c,\varepsilon}^+$ -- because the dynamics are uniquely ergodic as it is conjugate to an irrational rotation.
We have the following observation after numerical computations.

\begin{claim}[Experimental] \label{experiment}
When $\varepsilon \in 0.05$, $$\sum_{p/q \in \Q} \mu(I_{p,q}) \left(\int_{c \in I_{p,q}} \tau_{c, \epsilon}^+ + \tau_{c, \epsilon}^- \right) + \sum_{r \in \R \setminus \Q} \tau_{r,\epsilon}^+ \not \geq \mu$$ where $\mu$ is Lebesgue on $[0,1]$.
\end{claim}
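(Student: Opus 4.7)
The plan is to verify Claim \ref{experiment} by direct numerical simulation of both sides on a fine spatial grid in $[0,1]$ at $\varepsilon = 0.05$, then exhibit a point $x^*$ at which the right hand side falls strictly below Lebesgue. Writing
\[
\cE(x) := 1 - \frac{d}{d\mu}\int_{r \in [0,1] \setminus \Q} \tau^+_{r,\varepsilon}\, dr, \qquad \cH(x) := \sum_{p/q \in [0,1]\cap \Q} \mu(I_{p,q})\,\frac{d}{d\mu}\!\left(\int_{c \in I_{p,q}} (\tau^+_{c,\varepsilon} + \tau^-_{c,\varepsilon})\, dc\right)\!(x),
\]
the claim reduces to exhibiting a point $x^* \in [0,1]$ with $\cH(x^*) < \cE(x^*)$.

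First I would approximate $\cE$. Since for irrational rotation number the map $f_{c,\varepsilon}$ is smoothly conjugate to a rotation (Arnold's theorem quoted above), $\tau^+_{c,\varepsilon}$ is the pushforward of Lebesgue by the conjugacy and is obtainable either from the cohomological equation or, more crudely, by long orbit iteration followed by kernel density estimation. I would sample $c$ on a dense grid in $[0,1]$ with the low period tongues removed, compute the densities, and integrate against $dc$ to produce the density whose complement in $1$ defines $\cE$.

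Second I would compute $\cH$ tongue by tongue for all denominators $q \le Q$. For each coprime pair $(p,q)$ the tongue $I_{p,q}$ is located by root finding on the saddle node system $f_{c,\varepsilon}^q(x) = x$, $(f_{c,\varepsilon}^q)'(x) = 1$; the attracting and repelling $q$--orbits inside $I_{p,q}$ are then tracked by continuation in $c$ and binned to produce an explicit contribution to $\cH$. The standard Arnold tongue width bound $\mu(I_{p,q}) = O(\varepsilon^q)$, valid in our regime $\varepsilon \le 1/(2\pi)$, controls the omitted tail $q > Q$, so a moderate cutoff (say $Q = 20$) keeps the missing mass far below the gap we want to exhibit.

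Finally, one compares $\cH$ and $\cE$ pointwise on the grid. The grey shaded region in Figure \ref{fig:enter-label} indicates a neighborhood in which $\cH$ is observed to sit strictly below $\cE$ by a margin that dwarfs both the tail truncation and the binning error, producing the required witness $x^*$. The main obstacle is not the simulation itself but the error control: the densities $\tau^+_{c,\varepsilon}$ become ill conditioned as $c$ approaches a tongue boundary (where the conjugacy loses regularity), and the periodic orbit densities inside $I_{p,q}$ blow up at the saddle node endpoints, so a fully rigorous certification would require interval arithmetic bounds on the conjugacies near these singularities and on the geometric tail $\sum_{q > Q} \varepsilon^q$. For the experimental nature of the claim, however, it suffices to report the observed numerical gap, which is what Figure \ref{fig:enter-label} displays.
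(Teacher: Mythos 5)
Your proposal reproduces the paper's experiment essentially step for step: the same reduction to a pointwise density inequality $\cH(x) < \cE(x)$, the same long-iterate histogram/density estimation of the elliptic part over a dense grid of non-hyperbolic $c$, and the same rotation-curve computation of the hyperbolic tongue contributions for small periods (the paper uses Newton's method on $f_{c,\varepsilon}^q(x)=x+p$ and reads the density off the derivative of the resulting curve, which is what your continuation-and-binning scheme estimates), with the conclusion read off the figure. The only additions are your error-control remarks (the $\mu(I_{p,q})=O(\varepsilon^q)$ tail bound and the ill-conditioning near tongue boundaries and saddle-node endpoints), which the paper omits but which do not alter the argument.
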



This experimental result should be placed in context with the work of de La Llave, Sìmo, and Shub \cite{dLL2008entropy}, who studied the expanding family $$g_{k,\varepsilon,c} = kx + c + \varepsilon \sin 2 \pi x$$ when $2 \leq k \in \N.$ As this family is expanding and smooth, the forward physical measures is smooth and unique.  They explicitly computed the density of the average of all of these smooth measures, and concluded it was not Lebesgue. Of course, this does not forbid the existence of Dedieu--Shub measures in the expanding setting where there are infinitely many invariant measures (that are convex combinations of infinitely many periodic orbits) but is an indication that they may not exist.


\subsection{Description of experiment.}
Consider the phase space $[0,1] \times [0,1] = S^1 \times  \{ c - \text{parameters } \}$. Fix rational $p/q$ in lowest terms. If a periodic point is associated to a dynamical system with rotation number $p/q$, we call that point $p/q$ periodic. For fixed $\varepsilon$, a $p/q$--periodic point $x$, satisfies $f_{c,\varepsilon}^q(x) = x + p$ where the power denotes composition. This can be numerically computed using Newton's method. Fixing $\epsilon$, if an $x$ satisfies this equation, then $x$ is $p/q$ periodic for the parameter $c$. One can compute the graph $$g_{p/q} := \{(x,c) \in S^1 \times [0,1]: x \text{ is a } p/q \text{ periodic point} for f_{c,\epsilon}\}$$ and we call this graph a  rotation curve. Reiterating, the range of a given $g_{p/q}$ gives all possible $c$ such that $f_{c,\varepsilon}$ is a $p/q$ periodic point. It is clear that $$\frac{d}{d\mu} \int_{c \in I_{p/q}} \tau_{c,\epsilon}^+ + \tau_{c,\epsilon}^+ = \frac{d}{d\mu} g_{p/q}.$$

In particular, the densities of the  periodic points of some fixed rotation number is precisely the absolute value of the derivative of these rotation curves. 

Now let us restrict our attention to when $\varepsilon = 0.05$. If $c \in [0,0.05] \cup [0.95,1]$, then $f_{c,\varepsilon}$ consists of a single attracting fixed point and a single repelling fixed point. If $c \in \sim [0.4961, 0.5039]$ (of course it is a bit sharper), then $f_{c,\varepsilon}$ has rotation number 1/2. The measure of the period-2 region is clearly less than 0.08. The period 3 region, is even smaller, with measure approximately $0.00214$. As the period increases, the measure of the corresponding $c$ parameters decrease exponentially. The leftover, the parameters producing dynamics conjugated to an irrational rotation, constitutes about $88\%$ of the measure of parameters, and obviously, the preserved measure is unique. To generate the ``elliptic distributions", we picked one million $f_{c,0.05}$ randomly chosen, that was not hyperbolic. For each one, we computed one million forward iterations, and computed a histogram with 100 bins evenly space in [0,1]. Each histogram is a numerical approximation of the smooth absolutely continuous measure.  




For the hyperbolic densities, we numerically computed them from the rotation curves. We are primarily concerned with the period 1, period 2 points, and period 3 as the densities of higher period points are negligble. 

As is shown in Figure \ref{fig:enter-label} we plot a numerical approximation for the quantities in \ref{experiment}. We see that in the shaded region the mass cannot be equal to the leftover, which is equal to $1 - $ the integrated elliptic densities. Hence \ref{obstruction} holds numerically and so it seems there is no Dedieu--Shub measure.

\printbibliography

\end{document}